\newtheorem{thm}{Theorem}[section]
\newtheorem{corollary}[thm]{Corollary}
\newtheorem{lemma}[thm]{Lemma}
\newtheorem{proposition}[thm]{Proposition}
\newtheorem{prop}[thm]{Proposition}
\newtheorem{conjecture}[thm]{Conjecture}
\newtheorem{thm-dfn}[thm]{Theorem-Definition}
\theoremstyle{definition}
\newtheorem{definition}[thm]{Definition}
\newtheorem{remark}[thm]{Remark}
\numberwithin{equation}{section}
\newcommand{\fg}{{\mathfrak g}}
\newcommand{\ft}{{\mathfrak t}}
\newcommand{\fb}{{\mathfrak b}}
\newcommand{\fn}{{\mathfrak n}}
\newcommand{\rS}{{\mathrm S}}
\newcommand{\rW}{{\mathrm W}}
\newcommand{\rU}{{\mathrm U}}
\newcommand{\bC}{{\mathbb C}}
\newcommand{\bG}{{\mathbb G}}
\newcommand{\bZ}{{\mathbb Z}}
\newcommand{\mD}{\mathcal{D}}
\newcommand{\mE}{\mathcal{E}}
\newcommand{\mF}{\mathcal{F}}
\newcommand{\mA}{\mathcal{A}}
\newcommand{\mM}{\mathcal{M}}
\newcommand{\mO}{\mathcal{O}}
\newcommand{\mL}{\mathcal{L}}
\newcommand{\mH}{\mathcal{H}}
\newcommand{\mG}{\mathcal{G}}
\newcommand{\mB}{\mathcal{B}}
\newcommand{\calF}{{\mathcal F}}
\newcommand{\cO}{{\mathcal O}}
\newcommand{\cA}{{\mathcal A}}
\newcommand{\cF}{{\mathcal F}}
\newcommand{\cN}{{\mathcal N}}
\newcommand{\cE}{{\mathcal E}}
\newcommand{\cZ}{{\mathcal Z}}
\newcommand{\cS}{{\mathcal{S}}}
\newcommand{\cM}{{\mathcal{M}}}
\newcommand{\cG}{{\mathcal{G}}}
\newcommand{\on}{\operatorname}
\newcommand{\tU}{\widetilde U}
\newcommand{\ra}{\rightarrow}
\newcommand{\la}{\leftarrow}
\newcommand{\is}{\simeq}
\newcommand{\Loc}{\on{LocSys}}
\newcommand{\nc}{\newcommand}
\nc{\al}{{\alpha}} \nc{\be}{{\beta}} \nc{\ga}{{\gamma}}
\nc{\ve}{{\varepsilon}} \nc{\Ga}{{\Gamma}} 
\nc{\La}{{\Lambda}}
\nc{\ad }{{\on{ad }}}
\nc{\aff}{{\on{aff}}} \nc{\Aff}{{\mathbf{Aff}}}
\nc{\der}{{\on{der}}}
\nc{\diag}{{\on{diag}}}
\nc{\Fl}{{\calF\ell}}
\nc{\Hg}{{\on{Higgs}}}
\newcommand{\Hom}{{\on{Hom}}}
\nc{\Id}{{\on{Id}}}
\nc{\Ind}{{\on{Ind}}}
\nc{\Op}{{\on{Op}}}
\newcommand{\pr}{{\on{pr}}}
\newcommand{\Res}{{\on{Res}}}
\nc{\res}{{\on{res}}}
\newcommand{\Spec}{{\on{Spec}}}
\nc{\tr}{{\on{tr}}}
\nc{\GSp}{{\on{GSp}}} \nc{\GU}{{\on{GU}}} \nc{\SL}{{\on{SL}}}
\nc{\SU}{{\on{SU}}} \nc{\SO}{{\on{SO}}}
\newcommand{\Ad}{{\on{Ad}}}
\nc{\nh}{{\Loc_{J^p}(\tau')}}
\nc{\bnh}{{\Loc_{\breve J^p}(\tau')}}
\nc{\bU}{{\overline{U}}} 
\nc{\IC}{{\on{IC}}}
\newcommand{\br}{\begin{rouge}}
\newcommand{\er}{\end{rouge}}
\newcommand{\bb}{\begin{bluet}}
\newcommand{\eb}{\end{bluet}}
\newcommand{\ul}{{\underline\lambda}}
\newcommand{\prolim}{\textup{}\varprojlim\textup{}}
\nc{\ot}{\otimes}
\nc{\oh}{{\operatorname{H}}}
\nc{\gr}{{\operatorname{gr}}}
\nc{\rk}{{\operatorname{rank}}}
\nc{\codim}{{\operatorname{codim}}}
\nc{\img}{{\operatorname{Im}}}
\nc{\Span}{{\operatorname{Span}}}
\nc{\Img}{\operatorname{Im}}
\newcommand{\beqn}{\begin{equation*}}
\newcommand{\eeqn}{\end{equation*}}
\newcommand{\beq}{\begin{equation}}
\newcommand{\eeq}{\end{equation}}
\newcommand{\bern}{\begin{eqnarray*}}
\newcommand{\eern}{\end{eqnarray*}}
\nc{\Fano}[1]{\on{Fano}_{#1}}
\newcommand{\lL}{\lambda+\Lambda}
\newcommand{\hlL}{\widehat{\lambda+\Lambda}}
\newcommand{\quash}[1]{}  
\begin{document}
\title{non-linear Fourier transforms and the Braverman-Kazhdan conjecture}
        \author{Tsao-Hsien Chen}
        \address{Department of Mathematics, University of Chicago, Chicago, IL 60637, USA.
          }
        \email{chenth@math.uchicago.edu}
        \thanks{Tsao-Hsien Chen was supported in part by the AMS-Simons travel grant.}

\begin{abstract}
In this article we
prove a conjecture of Braverman-Kazhdan in \cite{BK} on 
the acyclicity of gamma sheaves in the de Rham setting. 
The proof 
relies on the techniques developed in \cite{BFO} on
Drinfeld center of Harish-Chandra bimodules and character $D$-modules. 
As an application, we
show that the functors of convolution with gamma $D$-modules, which can be 
viewed as a version of 
\emph{non-linear Fourier transforms}, 
commute with induction functors and 
are exact on the category of 
admissible $D$-modules on a reductive group.
\end{abstract}

\maketitle

\setcounter{tocdepth}{1} 

\section{Introduction}
Let $k$ be an algebraic closure of 
a finite field.
The Fourier-Deligne transform 
\[\mathrm F_V:D^b_c(V,\bar{\mathbb Q}_\ell)\ra D^b_c(V,\bar{\mathbb Q}_\ell)\]
on the derived category of $\ell$-adic sheaves 
on a vector space $V$ over $k$ had found remarkable applications to 
number theory and representation theory.
The Fourier-Deligne transform has the following remarkable properties:
(1) $\mathrm F_V$ is exact with respect to the perverse $t$-structure 
on $D^b_c(V,\bar{\mathbb Q}_\ell)$ (see \cite{KL}) and (2) when 
$V=\fg$ is a reductive Lie algebra, the functor $\mathrm F_V$ commutes 
with induction functors (see \cite{L}).  

Let $G$ be a reductive group over $k$ and $\breve G$ be the 
dual group of $G$ over $\bC$.
In their work \cite{BK,BK1}, Braverman and Kazhdan 
associated to each representation $\rho:\breve G\ra\on{GL}(V_\rho)$
of the dual group $\breve G$, satisfying some mild technical conditions, a perverse sheaf 
$\Psi_{G,\rho}$ on $G$ called \emph{gamma sheaf} and study the functor 
\[\mathrm F_{G,\rho}:=(-)*\Psi_{G,\rho}:D^b_c(G,\bar{\mathbb Q}_\ell)\ra D^b_c(G,\bar{\mathbb Q}_\ell)\]
of convolution with $\Psi_{G,\rho}$. The functor $\mathrm F_{G,\rho}$ can be thought as 
a non-linear analogue of the Fourier-Deligne transform and 
they conjectured the following properties parallel to the properties
(1) and (2) above:
\begin{conjecture}\label{ind}
(1) 
$\mathrm F_{G,\rho}$ is exact with respect to the 
perverse $t$-structure.
(2)
$\mathrm F_{G,\rho}$ commutes with induction functors.
\end{conjecture}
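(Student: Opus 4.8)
The plan is to pass to the de Rham setting, where $\mathrm F_{G,\rho}$ becomes the functor $(-)*\Psi_{G,\rho}$ of convolution with a regular holonomic gamma $D$-module $\Psi_{G,\rho}$ on $G$, and to deduce both assertions from the structure theory of character $D$-modules --- from the identification in \cite{BFO} (and its monodromic variant) of the category of character $D$-modules on $G$ with the Drinfeld center $\mathcal Z(\HC)$ of the monoidal category $\HC$ of Harish-Chandra bimodules. The first step is to record the basic features of $\Psi_{G,\rho}$: it is $\Ad$-equivariant, regular holonomic, and --- the key structural input --- a character $D$-module, hence an object of $\mathcal Z(\HC)$ equipped with its half-braiding. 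Equivalently, $\Psi_{G,\rho}$ is assembled from the torus: on a maximal torus $T$ there is an elementary gamma $D$-module $\Psi_{T,\rho}$, a product over the weights of $\rho$ of rank-one ``Gauss-sum'' $D$-modules (each a pushforward of a Kummer-type $D$-module along an isogeny), $W$-equivariantly, and $\Psi_{G,\rho}$ is recovered from $\Psi_{T,\rho}$ through the horocycle/Harish-Chandra correspondence. The technical heart of the whole argument --- the \emph{acyclicity of gamma sheaves} --- is the statement that this reconstruction is ``clean'': the $!$- and $*$-extensions to $G$ of the restriction of $\Psi_{G,\rho}$ to the regular semisimple locus agree, equivalently the pull-push defining $\Psi_{G,\rho}$ produces cohomology in a single degree.

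Granting acyclicity, I would prove part (1) as follows. First, convolution with $\Psi_{T,\rho}$ on $T$ is perverse $t$-exact: under the Mellin transform it becomes multiplication by a nowhere-vanishing (meromorphic) gamma factor, hence an exact endofunctor --- the de Rham incarnation of the classical $t$-exactness of the Fourier--Deligne transform. I would then transport this to $G$ through the center equivalence: the action of convolution with $\Psi_{G,\rho}$ on character $D$-modules is controlled, via the conservative exact functors underlying the center construction of \cite{BFO} (the forgetful functor $\mathcal Z(\HC)\to\HC$ and the comparison of $\HC$ with $D$-modules on $T$), by convolution with $\Psi_{T,\rho}$; conservativity and exactness then force $\mathrm F_{G,\rho}$ to be $t$-exact on character $D$-modules, and cleanness is precisely what upgrades this comparison from triangulated to abelian-categorical functors. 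Finally, since every admissible $D$-module on $G$ is built, via the stratification of $G$ by adjoint orbit type, out of parabolically induced character $D$-modules, $t$-exactness propagates to the whole admissible category.

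For part (2), I would use that the center equivalence of \cite{BFO} is compatible with change of Levi and that the gamma $D$-modules are compatible with parabolic restriction: for a parabolic $P=LU\subset G$ one has $\res^G_L\Psi_{G,\rho}\simeq\Psi_{L,\rho|_{\breve L}}$, both sides reducing to the same $\Psi_{T,\rho}$ on $T$. A projection-formula manipulation then gives $\ind_L^G(M)*\Psi_{G,\rho}\simeq\ind_L^G\bigl(M*\res^G_L\Psi_{G,\rho}\bigr)\simeq\ind_L^G\bigl(M*\Psi_{L,\rho|_{\breve L}}\bigr)$, which is the desired commutation of $\mathrm F_{G,\rho}$ with induction; here too acyclicity/cleanness is what makes $\res^G_L\Psi_{G,\rho}$ equal to $\Psi_{L,\rho|_{\breve L}}$ on the nose and lets the identity descend to abelian categories.

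The main obstacle is proving acyclicity itself: one must control the characteristic variety of $\Psi_{G,\rho}$ and show that the spectral sequences computing the defining pull-push degenerate, i.e. that the off-diagonal cohomology vanishes. Attacking this directly on $G$ seems hard; the point of invoking \cite{BFO} is to transport the vanishing through the Drinfeld center to the ``spectral''/Harish-Chandra side, where $\Psi_{G,\rho}$ acts block-by-block through the torus gamma factor and acyclicity becomes transparent. A secondary, more technical, obstacle is the de Rham bookkeeping: constructing $\Psi_{G,\rho}$ as a regular holonomic $D$-module (including the Kummer-type monodromy in its definition), extending the center equivalence of \cite{BFO} to the monodromic setting, checking $\Psi_{G,\rho}$ is a character $D$-module, and verifying that $\mathrm F_{G,\rho}$ preserves the category of admissible $D$-modules.
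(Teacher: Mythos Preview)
Your proposal rests on a structural claim that is false: the gamma $D$-module $\Psi_{G,\rho}$ is \emph{neither} regular holonomic \emph{nor} a character $D$-module. On the torus, $\Psi_{T,\rho}=\Psi_{\ul}$ is built from pushforwards of the irregular exponential $D$-module $\bC[x]e^{cx}$; its Mellin transform is a locally free sheaf supported on all of $\breve\ft$, so $\on{Sym}(\ft)$ --- and hence $Z(U(\fg))$ after induction to $G$ --- does not act locally finitely. Thus there is no way to place $\Psi_{G,\rho}$ itself inside the Drinfeld center of Harish-Chandra bimodules, and the step ``identify $\Psi_{G,\rho}$ in the center and read off acyclicity from its half-braiding'' has no content. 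Your remark that producing $\Psi_{G,\rho}$ as a regular holonomic object with Kummer-type monodromy is mere bookkeeping is exactly where the plan breaks: one cannot. The paper's route is indirect. For each $\theta\in\breve T/\rW$ it builds an auxiliary $\rW$-equivariant local system $\mE_\theta$ on $T$ and a genuine character $D$-module $\cM_\theta=\Ind_{T\subset B}^G(\mE_\theta)^{\rW}$; it is $\cM_\theta$, not $\Psi_{G,\rho}$, that is matched with an explicit object of the Drinfeld center via \cite{BFO}, yielding $\on{Av}_U(\cM_\theta)\is\mE_\theta$. A Mellin computation on $T$ (your gamma-factor intuition, made precise) then gives $\Psi_{\ul}*\mE_\xi\is\mE_\xi$ as $\rW_\xi$-equivariant local systems, hence $\Psi_{G,\ul}*\cM_\theta\is\cM_\theta$. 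Acyclicity of $\Psi_{G,\ul}$ follows by using the $\mE_\xi$ as test objects: were the cone of $\Res^G_{T\subset B}(\Psi_{G,\ul})\to\on{Av}_U(\Psi_{G,\ul})$ nonzero, convolution with some Kummer local system $\mL_\xi$ would detect it, contradicting the computation just made. Part (2) then follows formally from acyclicity, essentially as you say.

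Your exactness argument for part (1) has a separate gap. Admissible $D$-modules are not required to be $\Ad(G)$-equivariant, so they are not built out of (parabolically induced) character $D$-modules in any way that would let $t$-exactness propagate by d\'evissage along the orbit stratification. The paper instead applies the conservative $t$-exact functor $\on{HC}(-)*\mathrm I_Y$ of \cite{BFO} directly to the entire admissible category and proves $\on{HC}(\Psi_{G,\ul})*\mathrm I_Y\is\mathrm I_Y$ as pro-objects, via the torus identity $\Psi_{\ul}*\hat\mL_{\xi}\is\hat\mL_{\xi}$ together with acyclicity. Note finally that even with this argument only exactness on admissible modules is obtained; full perverse $t$-exactness on $D(G)_{hol}$ remains conjectural in the paper as well.
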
 

It is shown in \emph{loc. cit.} that 
the property (2) of the conjecture above follows from the following 
acyclicity of gamma sheaves:
\begin{conjecture}[Conjecture 9.2 \cite{BK}]\label{acyc}
Let $B$ be a Borel subgroup and consider 
the quotient map $\pi_U:G\ra G/U$, 
where $U$ is the unipotent radical of $B$. 
Then $(\pi_U)_!\Psi_{G,\rho}$ is supported on 
$T=B/U\subset G/U$. In other words, for any $g\in G-B$ we have 
$H^*_c(gU,i^*\Psi_{G,\rho})=0$. Here $i:gU\ra G$ denotes the 
inclusion map.

\end{conjecture}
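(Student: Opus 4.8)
The plan is to pass to the de Rham side and convert the acyclicity into a support statement for a convolution. Write $\Psi=\Psi_{G,\rho}$ for the gamma $\mathcal D$-module on $G$; it is conjugation-equivariant by construction, and carries the central structure (half-braiding) that makes $(-)\star\Psi$ a central endofunctor of $(\mathcal D(G),\star)$. Let $\delta_U$ denote the constant $\mathcal D$-module on $U$, extended by $!$ to $G$. A base-change computation identifies the $!$-costalk of $(\pi_U)_!\Psi$ at $gU$ with $H^*_{dR,c}(gU,\Psi|_{gU})$, hence identifies the pullback of $(\pi_U)_!\Psi$ along $\pi_U$ with the right convolution $\Psi\star\delta_U$ (up to a shift). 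So Conjecture~\ref{acyc} is equivalent to the assertion that $\Psi\star\delta_U$ is supported on $B$. Two reductions are then available: because $\Psi$ is conjugation-equivariant, left and right $U$-averaging of $\Psi$ coincide, $\delta_U\star\Psi\cong\Psi\star\delta_U$ (this is also the half-braiding at $\delta_U$), so the problem is left--right symmetric; and because $\Psi\star\delta_U$ is $B$-conjugation-equivariant and $G=\bigsqcup_{w\in W}BwB$, it is enough to show, for each $w\neq e$, that $\Psi\star\delta_U$ vanishes on $BwB$, i.e. that $H^*_{dR,c}(\dot wU,\Psi|_{\dot wU})=0$ for a representative $\dot w$.

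The engine is the description of character $\mathcal D$-modules in \cite{BFO}. One first needs that $\Psi$ is itself a character (equivalently, admissible) $\mathcal D$-module: it is conjugation-equivariant, and a singular-support estimate --- inherited from the non-linear Fourier-type construction of $\Psi$ --- places it in the admissible category. Granting this, $\Psi$ corresponds, under the equivalence $\mathrm{Ch}(G)\simeq Z(\mathcal{HC})$ of \cite{BFO}, to a Harish-Chandra bimodule equipped with a half-braiding; in particular the Harish-Chandra (horocycle) functor $\mathrm{hc}$, which by \cite{BFO} factors through $\mathcal D(T)$, sends $\Psi$ to a genuinely $T$-supported object. Concretely, the doubly-averaged object $\delta_U\star\Psi\star\delta_U$ is supported on $B$ and recovers, on $T=B/U$, the Harish-Chandra restriction $\mathrm{hc}(\Psi)$, which by the characterization of $\Psi$ in \cite{BK1} is the explicit $T$-datum built out of the rank-one gamma $\mathcal D$-modules attached to the weights of $\rho$. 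Thus the two-sided $U$-average already lands on $T$; what remains is to promote this to the one-sided statement of the previous paragraph.

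For the promotion I would use the half-braiding to express, for each $w$, the $BwB$-stratum of $\Psi\star\delta_U$ in terms of its restriction to $BeB=B$, equivalently in terms of $\mathrm{hc}(\Psi)$: the half-braiding of a central object is precisely the datum that rigidifies its behaviour across the Bruhat stratification, and feeding in the explicit shape of $\mathrm{hc}(\Psi)$ for the $B$-stratum forces the $BwB$-stratum of $\Psi\star\delta_U$ to vanish for every $w\neq e$. For generic $\rho$ this vanishing is immediate from the shape of the $T$-datum; the remaining $\rho$ are handled by a deformation argument within the family, using the exactness of $(-)\star\Psi$.

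The main obstacle is the structural input underlying the last two paragraphs: checking that the gamma $\mathcal D$-module genuinely belongs to the \cite{BFO} category of character $\mathcal D$-modules (the singular-support and holonomicity estimates, together with the compatibility of its central structure with parabolic induction), and making precise the sense in which the half-braiding propagates a central object across Bruhat cells compatibly with $U$-averaging, so that the cell-by-cell comparison with $\mathrm{hc}(\Psi)$ on $T$ is legitimate. Once that compatibility is available --- it is exactly what the Drinfeld-center formalism of \cite{BFO} is built to provide --- the vanishing off $T$ follows from the known shape of $\Psi|_T$.
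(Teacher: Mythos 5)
Your reduction to a support statement for $\Psi\star\delta_U$ and the idea of bringing in the Drinfeld-center machinery of \cite{BFO} are in the right spirit, but the load-bearing step fails: $\Psi_{G,\rho}$ is \emph{not} a character (admissible) $D$-module, so it does not lie in the category to which the \cite{BFO} equivalence $CS_\theta\simeq Z(\mathcal{HC}_{\hat\xi})$ applies, and it does not correspond to an object of the Drinfeld center of Harish-Chandra bimodules. Admissibility requires $Z(U(\fg))$ to act locally finitely; under Mellin transform, $\Psi_{\ul}$ corresponds to $\bC[x_1^{\pm1},\dots,x_r^{\pm1}]e^{\sum cx_i}\otimes_{\bC[v_i]}S$, whose support as an $S$-module is all of $\breve\ft$, so $\Ind_{T\subset B}^G(\Psi_\ul)^{\rW}$ has no generalized central character and is not admissible. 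Consequently the claims that $\mathrm{hc}(\Psi)$ is ``genuinely $T$-supported'' by \cite{BFO}, and that $\Psi$ carries a half-braiding that ``rigidifies its behaviour across the Bruhat stratification,'' have no foundation; the subsequent ``generic $\rho$ plus deformation'' promotion is also not a proof, and it invokes the exactness of $(-)\star\Psi$, which is itself one of the conjectures at stake (and is only established, a posteriori, on admissible modules), so the argument is circular at that point.

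The paper's actual strategy is designed precisely to route around this obstruction. For each $\theta\in\breve T/\rW$ one constructs an auxiliary $\rW$-equivariant local system $\mE_\theta$ on $T$ (built from $S_\xi=S/S\cdot S_+^{\rW_\xi}$) and the character $D$-module $\cM_\theta=\Ind_{T\subset B}^G(\mE_\theta)^{\rW}$, which \emph{does} lie in $CS_\theta$ and corresponds under \cite{BFO} to an explicit central element $\cZ_\lambda=\tU_{\hat\lambda}\otimes_S S_\xi^\lambda$; this yields $\on{Av}_U(\cM_\theta)\is\mE_\theta$ (Theorem \ref{Key}), i.e.\ the acyclicity for $\cM_\theta$ rather than for $\Psi$. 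A separate Mellin-transform computation gives $\Psi_{\ul}*\mE_\xi\is\mE_\xi$, hence $\Psi_{G,\ul}*\cM_\theta\is\cM_\theta$. Combining these, $\on{Av}_U(\Psi_{G,\ul})*\mE_\theta\is\mE_\theta$ is supported on $T$ for every $\theta$, and one concludes that the cone of $\Res_{T\subset B}^G(\Psi_{G,\ul})\to\on{Av}_U(\Psi_{G,\ul})$ convolves to zero with every Kummer local system; a Gabber--Loeser-type vanishing lemma on the $T$-torsor then forces the cone to vanish. If you want to salvage your outline, the missing idea is exactly this: do not feed $\Psi$ itself into \cite{BFO}, but test it against the full family $\{\cM_\theta\}$ of genuine character $D$-modules and use a conservativity statement for convolution with Kummer local systems to pass back to $\Psi$.
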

In \cite{CN}, Cheng and Ng\^o established Conjecture \ref{acyc} for
$G=GL_n$ by generalizing the argument in \cite{BK} for $GL_2$.

The gamma sheaf $\Psi_{G,\rho}$
has an obvious analogue in the $D$-modules setting, which we call it gamma $D$-module, and 
the goal of this paper is to study 
$D$-modules analogue of Conjecture \ref{ind} and Conjecture \ref{acyc}. 

We now state our main results. 
Fix a Borel subgroup $B$ and a maximal tours $T\subset B$.
We begin with a construction, due to Braverman and Kazhdan, of 
gamma $D$-module 
$\Psi_{G,\ul}$
attached to a collection 
of co-characters $\ul=\{\lambda_1,...,\lambda_r\}$ of the maximal torus $T$.
The relation between $\Psi_{G,\ul}$ and the gamma 
$D$-module attached to a representation of the dual group 
$\breve G$ will be explained in \S\ref{cons of gamma d mod}.
Let $\ul=\{\lambda_1,...,\lambda_r\}$ be a collection of 
co-characters of $T$. Consider the following maps $\pr_\ul:=\prod\lambda_i:\bG_m^r\ra T$, $\tr:\bG_m^r\ra\bG_a, (x_1,...,x_r)\ra\sum x_i$. 
Assume $\ul$ is stable under the action of the Weyl group $\rW$ and
each $\lambda_i\in\ul$ is $\sigma$-positive (see Definition \ref{sigma positive}), then Braverman and Kazhdan showed that
\[\Psi_{\ul}:=(\pr_\ul)_*\tr^*(\bC[x]e^x),\]
where $\bC[x]e^x$ is the exponential $D$-module on $\bG_a=\on{Spec}(\bC[x])$,
is a (de Rham) local system on the image of $\pr_\ul$ equipped with a natural 
$\rW$-equivariant structure. 
Assume further that $\pr_\ul$ is onto. Then $\Psi_{\ul}$ is a 
$\rW$-equivariant local system on $T$. Moreover, 
the $\rW$-equivariant structure on $\Psi_{\ul}$ induces a 
$\rW$-action on the induction $\Ind_{T\subset B}^G(\Psi_{\ul})$ and 
the gamma $D$-module $\Psi_{G,\ul}$ is defined as 
the $\rW$-invariant factor of $\Ind_{T\subset B}^G(\Psi_{\ul})$:
\[\Psi_{G,\ul}:=\Ind_{T\subset B}^G(\Psi_{\ul})^\rW.\]
We prove the following equivalent form of Conjecture \ref{acyc}
in the $D$-module setting: 
\begin{thm}(see Theorem \ref{main thm})
$\on{Av}_U(\Psi_{G,\ul}):=(\pi_U)_*\Psi_{G,\ul}$ is supported on $T=B/U\subset G/U$. 

\quash{
(2) The functor of convolution with gamma $D$-module
\[\mathrm F_{G,\ul}:=(-)*\Psi_{G,\ul}: 
D(G)_{hol}\ra D(G)_{hol}\] commutes with induction functor.
Here $D(G)_{hol}$ denotes the derived category of holonomic $D$-modules on $G$.}
\end{thm}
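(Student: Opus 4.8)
The plan is as follows. Because $\Psi_{G,\ul}$ is the $\rW$-invariant summand of $\Ind_{T\subset B}^G(\Psi_{\ul})$ and $\on{Av}_U=(\pi_U)_*$ commutes with the $\rW$-action (which acts on the inducing object, not on the geometry of $G$), it suffices to show that $\on{Av}_U(\Ind_{T\subset B}^G(\Psi_{\ul}))$ is supported on $T$. The quotient $\pi_U\colon G\to G/U$ is a $U$-torsor, hence smooth, and $p\colon G/U\to G/B$ exhibits $G/U$ as a $T$-torsor over $G/B$; for $hB\in G/B$ the fibre $p^{-1}(hB)$ is the $T$-coset $hB/U$, with $\pi_U^{-1}(hB/U)=hB$, so smooth base change identifies $\on{Av}_U(\Ind_{T\subset B}^G\Psi_{\ul})|_{hB/U}$ with the de Rham pushforward of $(\Ind_{T\subset B}^G\Psi_{\ul})|_{hB}$ along $hB\to hB/U$. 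For $hB=eB$ this is $\on{Res}_{T\subset B}^G(\Ind_{T\subset B}^G\Psi_{\ul})$, which by the Mackey formula and the $\rW$-equivariance of $\Psi_{\ul}$ is $\Psi_{\ul}^{\oplus|\rW|}$ (hence $\Psi_{\ul}$ after $\rW$-invariants); so the support is exactly $T$ once we know it vanishes off $T$. Stratifying $G/U$ by the $B$-orbits $\mathcal O_w=BwB/U$ (with $\mathcal O_e=T$ closed), it therefore suffices to prove that the $w$-th graded piece of the resulting filtration of $\on{Av}_U(\Ind_{T\subset B}^G\Psi_{\ul})$ vanishes for $w\neq e$; the $\rW$-action permutes the strata, so this reduction is legitimate and it establishes the support statement already before passing to $\rW$-invariants.

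To compute the graded pieces I would work throughout in the formalism of \cite{BFO}. Over $\mathcal O_w$ the Grothendieck--Springer-type correspondence space defining $\Ind_{T\subset B}^G$ maps to $\mathcal O_w$ with fibres assembled from unipotent directions, and the $w$-th graded piece is, up to shift, the pushforward along this map of the pullback of $\Psi_{\ul}$ (via the $\rW$-equivariance) twisted by $\tr^*(\bC[x]e^x)$. The techniques of \cite{BFO}---concretely, the cleanness of parabolic restriction for character-type $D$-modules (coincidence of $!$- and $*$-push/pull along the maps in play) and the compatibility of parabolic induction/restriction with the Bruhat stratification---are what make this stratum-by-stratum description valid and unambiguous; the point is to transport them from the regular-holonomic setting to the present monodromic, exponentially twisted one. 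The decisive feature is then that the phase of the exponential factor restricts \emph{non-degenerately} to the unipotent directions of the fibre over $\mathcal O_w$ for $w\neq e$, and \emph{trivially} for $w=e$; this dichotomy is exactly the content of $\sigma$-positivity of the $\lambda_i$ (Definition \ref{sigma positive}), which governs which components of $\tr\circ\pr_{\ul}$ survive along each stratum.

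Granting this, the vanishing is immediate: $H^{\bullet}_{\mathrm{dR}}(\bbA^n,e^{\varphi})=0=H^{\bullet}_c(\bbA^n,e^{\varphi})$ whenever $\varphi$ is a non-degenerate affine-linear function, so each graded piece with $w\neq e$ has vanishing pushforward, while for $w=e$ the untwisted pushforward reproduces $\Psi_{\ul}$ as above. Hence $\on{Av}_U(\Ind_{T\subset B}^G\Psi_{\ul})$, and a fortiori its $\rW$-invariant summand $\on{Av}_U(\Psi_{G,\ul})$, is supported on $\mathcal O_e=T$.

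The hard part is the middle step: establishing the clean, stratum-by-stratum description of $\on{Av}_U\circ\Ind_{T\subset B}^G$ in this generality. For genuine character $D$-modules this rests on the exactness and adjunction properties of parabolic induction/restriction (Mackey, second adjointness) and their compatibility with the Bruhat stratification---precisely the package of results flowing from the \cite{BFO} realization of character $D$-modules as the Drinfeld center of Harish-Chandra bimodules---and the work lies in carrying this over to the irregular object $\Psi_{\ul}$. A secondary point requiring care is the dictionary between $\sigma$-positivity and the non-degeneracy of the exponents on the off-diagonal strata: it is this non-degeneracy, rather than any cancellation among the strata, that produces the vanishing.
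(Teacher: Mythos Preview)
Your proposal has a genuine gap at precisely the point you flag as ``the hard part.'' The claim that the exponential phase restricts non-degenerately to the unipotent directions over $\mathcal O_w$ for $w\neq e$ is essentially the theorem itself, and you do not prove it. The appeal to $\sigma$-positivity is misplaced: that condition concerns the map $\pr_{\ul}\colon\bG_m^r\to T$ and is used only to guarantee that $\Psi_{\ul}$ is a local system on $T$ (Proposition~\ref{Psi and L}); it says nothing about the geometry of the Grothendieck--Springer fibres over Bruhat cells in $G/U$. Likewise, the BFO results you cite apply to regular holonomic character $D$-modules; carrying their cleanness package over to the irregular object $\Psi_{G,\ul}$ is not a formality, and you give no mechanism for doing so. Finally, the $\rW$-action on $\Ind_{T\subset B}^G(\Psi_\ul)$ is induced from the $\rW$-equivariant structure on $\Psi_\ul$, not from any geometric $\rW$-action on $G/U$, so it does not permute the Bruhat strata as you suggest.

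The paper avoids all of this by an indirect argument. It never analyzes $\on{Av}_U(\Psi_{G,\ul})$ stratum by stratum. Instead it introduces, for each $\theta\in\breve T/\rW$, a \emph{regular} character $D$-module $\cM_\theta=\Ind_{T\subset B}^G(\mE_\theta)^{\rW}$ and proves two facts: (i) $\on{Av}_U(\cM_\theta)\cong\mE_\theta$ is supported on $T$ (Theorem~\ref{Key}, and this is where the BFO Drinfeld-center machinery is legitimately applied, since $\cM_\theta$ is regular holonomic), and (ii) $\Psi_{G,\ul}*\cM_\theta\cong\cM_\theta$ (Theorem~\ref{conv with M_theta}, proved via an explicit Mellin-transform computation of $\Psi_\ul*\mE_\xi$). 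Combining these gives $\on{Av}_U(\Psi_{G,\ul})*\mE_\theta\cong\on{Av}_U(\Psi_{G,\ul}*\cM_\theta)\cong\on{Av}_U(\cM_\theta)\cong\mE_\theta$, which is supported on $T$; hence the cone of $\Res_{T\subset B}^G(\Psi_{G,\ul})\to\on{Av}_U(\Psi_{G,\ul})$ has vanishing convolution with every $\mE_\xi$, and two elementary vanishing lemmas (Lemmas~\ref{vanishing 2} and \ref{vanishing 1}) force the cone to be zero. The irregular object $\Psi_{G,\ul}$ is thus handled by convolving it against a family of regular test objects for which the desired acyclicity is already known.
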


In view of exactness property in
Conjecture \ref{ind}, we establish the following result.
We call a holonomic $D$-module on $G$ \emph{admissible} if
the action of the center $Z$ of the 
universal enveloping algebra $U(\fg)$, viewing as invariant differential operators, is locally finite. We denote by $\cA(G)$ the abelian category of admissible $D$-modules on $G$ and $D(\cA(G))$ be the corresponding derived category. Denote by $D(G)_{hol}$ the derived category of 
holonomic $D$-modules on $G$.
Consider the functor $\mathrm F_{G,\ul}:=(-)*\Psi_{G,\ul}:D(G)_{hol}\ra D(G)_{hol}$
of convolution with gamma $D$-module $\Psi_{G,\ul}$.
\begin{thm}(see Theorem \ref{exact of FT})
The functor $\mathrm F_{G,\ul}$ restricts to a functor 
\[\mathrm F_{G,\ul}:D(\mA(G))\ra D(\mA(G))\]
which is exact with respect to the natural $t$-structure. That is, we have 
$\mathrm F_{G,\ul}(\cM)\in\cA(G)$ for $\cM\in\cA(G)$.

\end{thm}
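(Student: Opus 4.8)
The plan is to deduce exactness from the first main theorem together with the structure of the category $\cA(G)$ of admissible $D$-modules and the compatibility of $\mathrm F_{G,\ul}$ with induction. First I would observe that admissible $D$-modules are holonomic with regular singularities (local finiteness of the $Z$-action forces nilpotent singular support in the appropriate sense), so $\cA(G)$ is an abelian subcategory of $D(G)_{hol}$ closed under subquotients, and $D(\cA(G))$ maps to $D(G)_{hol}$. The key input from \cite{BFO} is that $\cA(G)$ is built, via the Harish-Chandra functor and parabolic induction, from the much more rigid category of character sheaves / central Harish-Chandra bimodules; in particular every object of $\cA(G)$ admits a finite filtration whose subquotients are direct summands of induced objects $\Ind_{L\subset P}^G(\mathcal{E})$ for Levi subgroups $L$ and $\mathcal{E}$ a character $D$-module on $L$ (cusp­idal data). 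This reduces the exactness statement to two claims: (a) $\mathrm F_{G,\ul}$ preserves $\cA(G)$, and (b) $\mathrm F_{G,\ul}$ is $t$-exact on $D(\cA(G))$.

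For claim (a) I would use the compatibility of convolution with $\Psi_{G,\ul}$ with induction. Because $\Psi_{G,\ul}=\Ind_{T\subset B}^G(\Psi_{\ul})^\rW$ is itself an induced object from a local system on the torus, the projection formula together with the first main theorem (that $\on{Av}_U(\Psi_{G,\ul})$ is supported on $T$) gives a formula of the shape
\[
\Ind_{L\subset P}^G(\mathcal{M})*\Psi_{G,\ul}\;\simeq\;\Ind_{L\subset P}^G\bigl(\mathcal{M}*\Psi_{L,\ul_L}\bigr),
\]
where $\ul_L$ is the restriction of the data to $L$ and $\Psi_{L,\ul_L}$ is the corresponding gamma $D$-module on $L$; the acyclicity theorem is exactly what is needed to kill the boundary contributions in the parabolic projection and thus to identify the convolution on $G$ with an induced convolution on $L$. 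Since induction $\Ind_{L\subset P}^G$ sends character $D$-modules on $L$ to objects of $\cA(G)$, and since on a Levi the statement is again of the same type, an induction on $\operatorname{rk} G$ (or on $\dim G$) reduces everything to the case where $\mathcal{M}$ is cuspidal — and cuspidal admissible $D$-modules are supported, up to translation, on the locus where the relevant Weyl-group averaging makes $\Psi_{\ul}$ a local system, so the convolution stays admissible by a direct computation using that $\Psi_{\ul}$ is a local system on $T$.

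For claim (b), once we know $\mathrm F_{G,\ul}$ preserves $\cA(G)$, $t$-exactness can be checked on a set of generators, and I would again reduce via the filtration to induced objects. Parabolic induction $\Ind_{L\subset P}^G$ is $t$-exact up to shift on the relevant categories (this is standard for holonomic induction normalized as in the paper, and is where one uses that $G/P$-fibers behave well), and convolution on the torus $T$ with the local system $\Psi_{\ul}$ is visibly $t$-exact since it is tensoring a local system followed by a pushforward along the multiplication map $T\times T\to T$, which is smooth of relative dimension $\dim T$. Combining these through the displayed induction formula yields $t$-exactness on $G$. The main obstacle I expect is the rigorous proof of the induction-compatibility formula in the de Rham $D$-module setting: one must show that $(\pi_U)_*$ applied to $\Psi_{G,\ul}$ being concentrated on $T$ propagates to the statement that convolution commutes with $\Ind_{L\subset P}^G$ for all parabolics, not just Borels — this requires transitivity of induction and a careful base-change/projection-formula argument around the diagram $P\backslash G \leftarrow \,\cdot\, \rightarrow L$, controlling the non-compactness of the fibers, which is precisely the role played by the acyclicity theorem proved earlier via the techniques of \cite{BFO}.
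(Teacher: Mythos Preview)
Your approach has a genuine gap at its foundation. The filtration you propose---building every object of $\cA(G)$ out of subquotients of parabolically induced character $D$-modules $\Ind_{L\subset P}^G(\mathcal{E})$---is the standard structure theory for \emph{character sheaves}, i.e.\ for $G$-conjugation-equivariant admissible modules. But the paper's Definition~\ref{admissible} of $\cA(G)$ explicitly drops the equivariance hypothesis (see the Remark immediately following it). A general admissible $D$-module is only required to be holonomic with locally finite $Z(U(\fg))$-action; it need not be conjugation-equivariant, and there is no reason for it to sit in a filtration by induced objects. Your entire reduction in both (a) and (b) is built on this filtration, so the argument does not get off the ground for the class of modules actually under consideration.

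A secondary issue: even granting the filtration, your claim that convolution with $\Psi_{\ul}$ on $T$ is ``visibly $t$-exact'' because $m:T\times T\to T$ is smooth is not correct as stated. Smoothness of $m$ controls $m^*$, not $m_*$; pushforward along $m$ involves fiberwise de~Rham cohomology over copies of $T$, which is not concentrated in a single degree in general. What is true (and used in the paper) is that $\Psi_{\ul}*\mL_\xi^n\simeq\mL_\xi^n$ for the specific tower of local systems $\mL_\xi^n$ (Lemma~\ref{Psi_c}); this is a computation via Mellin transform, not a soft exactness statement.

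The paper's proof takes a completely different route that avoids any filtration of $\cM$. It uses the characterization of admissibility via monodromicity of $\on{Av}_U(\cM)$, together with the fact from \cite{BFO} that the twisted Harish-Chandra functor $\on{HC}(-)*\mathrm I_Y$ is $t$-exact and conservative on $D(\cA(G)_{[\xi]})$. Exactness then reduces to the single identity $\on{HC}(\Psi_{G,\ul})*\mathrm I_Y\simeq\mathrm I_Y$, which in turn comes from $\on{Av}_U(\Psi_{G,\ul})\simeq\Psi_{\ul}$ (the acyclicity theorem) and the pro-local-system computation $\Psi_{\ul}*\hat\mL_{\xi^{-1}}\simeq\hat\mL_{\xi^{-1}}$. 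This works uniformly for all admissible $\cM$, equivariant or not.
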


The proofs of Theorem 1.3 and Theorem 1.4 make use of
certain remarkable 
character $D$-module $\cM_\theta$ on $G$ with 
generalized central character $\theta\in\breve T/\rW$ and
the results in \cite{BFO} on
the equivalence between 
Drinfeld center of Harish-Chandra bimodules 
and character $D$-modules. 
In more details, 
we construct for each $\theta\in\breve T/\rW$ 
a $\rW$-equivariant 
local system $\mE_\theta$ 
on $T$ and consider the character $D$-module  
$\cM_\theta:=\Ind_{T\subset B}^G(\mE_\theta)^\rW$
(see 
\S\ref{central Loc},
\S\ref{CS M_theta})\footnote{The author learned the 
existence of $\mE_\theta$ from R.Bezrukavnikov.
}. 
Using the results in \cite{BFO}, we 
prove the acyclicity of $\cM_\theta$ similar to Conjecture \ref{acyc} (see Theorem \ref{Key})
and compute the convolution of $\cM_\theta$ with the gamma $D$-module $\Psi_{G,\ul}$
(see Theorem \ref{conv with M_theta}). 
A key step in the proof of the acyclicity of $\cM_\theta$ is the identification of 
the global section of $\mE_\theta$ with certain element in the Drinfeld center of 
Harish-Chandra bimodules (see \S\ref{center}).
Those results together with some simple vanishing lemmas (see \S\ref{vanishing lemmas}) imply
Theorem 1.3.
Theorem 1.4 follows from a computation of the convolution of
$\Psi_{\ul}$ with certain (pro-) local system 
on 
$T$ (see Lemma \ref{Psi_c}) and the exactness property of the
(twisted) Harish-Chandra functor in
\cite[Corollary 3.4]{BFO} (see also \cite{CY}).

It seems that our methods may be applicable to
the setting of 
$\ell$-adic sheaves. 
A new ingredient needed is an appropriate version of the   
results in \cite{BFO} in the $\ell$-adic setting.

\quash{
The paper is organized as follows. In Section~\ref{sec-pre} we 
recall some facts about symmetric pairs and  
introduce a class of 
representations of equivariant fundamental groups.
In Section~\ref{sec-parabolic} we study parabolic induction functors for certain $\theta$-stable 
parabolic subgroups. In Section~\ref{sec-FT}, we prove Theorem \ref{main theorem}:
the Fourier transform defines a bijection between the set of nilpotent 
orbital complexes and the class of representations of equivariant fundamental groups 
introduced in Section~\ref{sec-pre}.  In Section~\ref{sec-Hess}
and Section~\ref{Rep}, we discuss applications of our results to 
cohomology of Hessenberg varieties and representations of Hecke algebras of symmetric groups  
at $q=-1$.
Finally, in Section~\ref{sec-conj}, we propose 
a conjecture that gives a more precise description of the bijection in Theorem \ref{main theorem}.}

The paper is organized as follows. In Section 2 we recall some facts about 
algebraic groups and $D$-modules. In Section 3 we introduce 
the character $D$-module $\cM_\theta$ and prove the 
acyclicity of it using \cite{BFO}. In Section 4 we recall the construction 
of gamma $D$-modules $\Psi_{G,\ul}$ and compute the convolution of 
$\Psi_{G,\ul}$ with $\cM_\theta$. In Section 5 we prove 
Theorem \ref{main thm}: the acyclicity of 
gamma $D$-module $\Psi_{G,\ul}$. In Section 6 we consider the 
functor $\mathrm F_{G,\ul}:=(-)*\Psi_{G,\ul}$ of convolution with the
gamma $D$-module and we prove that $\mathrm F_{G,\ul}$
commutes with induction functors (see Theorem \ref{commutes with ind}) and 
is exact on the category of admissible $D$-modules on $G$ (see Theorem \ref{exact of FT}).

{\bf Acknowledgement.}
The author would like to thank 
R.Bezrukavnikov and Z.Yun for useful discussions.
He also thanks the Institute 
of Mathematics Academia Sinica in Taipei for support, hospitality, and a nice research environment.

\section{Notations}
\subsection{Group data}
Let $G$ be a reductive group over $\bC$. Let $B\subset G$ be a 
Borel subgroup and $T\subset B$ be a maximal torus. Let 
by $\Lambda$ be the weight lattice. Let $\Phi$ be the root system determined by 
$(G,T)$ and 
$\Phi^+$ be the set of positive roots determined by 
$(G,B)$ and $\Pi$ be the set of simple roots. We denote by $\on{W}$ be 
the Weyl group and $\rW_{a}=\rW\rtimes\Lambda$ the affine Weyl group.

We denote by $\breve G$ the dual group of $G$ and 
$\breve T$ the dual maximal torus. 
We fix a non-degenerate $\rW$-invariant form $(,)$ on $\ft$ and use it to
identify $\ft^*$ with $\breve\ft$.

We denote by 
$\mB=G/B$ the flag variety, 
$X=G/N$ the basic affine space, and 
$Y=X\times X$.

We denote by $\fg$, $\fb$, $\ft$ (resp. 
$\breve\fg$, $\breve\fb$, $\breve\ft$) the Lie algebras of $G,B,T$ (resp. 
$\breve G,\breve B,\breve T$).
For any $\xi\in\breve T$ (resp. $\mu\in\breve\ft$) we write 
 $[\xi]\in\breve T/\rW$ (resp. $[\mu]\in\breve T=\breve\ft/\Lambda$) for its image in $\breve T/\rW$ (resp. $\breve T$).

We denote by $G_{rs}$ (resp. $T_{rs}$) be the open subset consisting of regular semi-simple elements
in $G$ (resp. $T$). 

\subsection{$D$-modules}
For any smooth variety $X$ over $\bC$ we denote by $\mM(X)$ the abelian category of 
$D$-modules on $X$ and $\mM(X)_{hol}$ the full subcategory of holonomic $D$-modules. 
We write 
$D(X)$ for the bounded derived category $D$-modules on $X$
 and $D(X)_{hol}$ for the bounded derived category of holonomic $D$-modules on $X$.
We denote by $\mO_X$ and 
$\mD_X$ the sheaf of functors on $X$ and 
the sheaf of differential operators on $X$ respectively.
For a $\cF\in D(X)$, we denote by $\mH^i(\cF)\in\cM(X)$ its $i$-th cohomology $D$-module.

Let $f:X\ra Y$ be a map between smooth varieties. 
Then we have functors 
$f_*,f^!$ between $D(X)$ and $D(Y)$ and 
functors $f^*,f^!,f_*,f_!$ between $D(X)_{hol}$ and $D(Y)_{hol}$.
Note all functors above are understood in the derived sense. 
We denote by 
$\mathbb D$ the duality functor on 
$D(X)_{hol}$.
We define $f^0:=f^![\dim Y-\dim X]$.
When $Y=\on{Spec}(\bC)$ is a point 
we sometimes write $R\Gamma_{\text{dR}}(\cM):=f_*(\cM)$ and 
$H^i_{\text{dR}}(\cM):=\mH^i(f_*(\cM))$.

For $\cM,\cM'\in D(X)_{h}$, we define 
$\cM\otimes\cM=\Delta^*(\cM\boxtimes\cM')$
and $\cM\otimes^!\cM=\Delta^!(\cM\boxtimes\cM')$
where $\Delta:X\ra X\times X$ is the diagonal embedding.

For a $D$-module $\cM$ on $X$ we denote by
$\Gamma(\cM)$ (resp. $R\Gamma(\cM)$) the global sections (resp.
derived global sections)
of $\cM$ regrading 
as quasi-coherent $\mO_X$-module.

By a local system on $X$ we mean 
a $\cO_X$-coherent $D$-module on $X$, a.k.a. 
a vector bundle on $X$ with a flat connection. 

Assume $f:X\ra Y$ is a principal $T$-bundle. 
A $D$-module $\cF$ on $X$ is called $T$-monodromic
if it is weakly $T$-equivariant (see \cite[Section 2.5]{BB1}). 
We denote by $\cM(X)_{mon}$ the category consisting of 
$T$-monodromic $D$-modules on $X$.
A object $\cF\in D(X)$ is called $T$-monodromic if 
$\mH^i(\cF)\in\cM(X)_{mon}$ for all $i$. We denote by
$D(X)_{mon}$ the full subcategory consisting of 
$T$-monodromic objects.
Let $\cF\in\cM(X)_{mon}$.
For any $\mu\in\breve\ft\ (\is\ft^*)$, we denote $\Gamma^{\hat\mu}(\cF)$ 
(resp. $R\Gamma^{\hat\mu}(\cF)$) the maximal summand 
of $\Gamma(X,\cF)$ (resp. $R\Gamma(X,\cF)$)
where $U(\ft)$ (acting as infinitesimal
translations along the action of $T$) acts with the generalized eigenvalue $\mu$.

\quash{
The action induces a
map $U(\ft)\ra\Gamma(X,\mD_X)$, hence for any 
$\cF\in\cM(X)$ and $U\subset X$ open, the section 
$\Gamma(U,\cF)$ is 
naturally a $U(\ft)$-module.
A $D$-module $\cF$ on $X$ is called $T$-monodromic if the action of 
$U(\ft)$ on $\Gamma(U,\cF)$ is locally finite for all open $U\subset X$.}

\section{Drinfeld center and Character $D$-modules $\cM_\theta$}\label{CS}

In this section 
we attach to 
each  
$\theta\in\breve T/\rW$ a $\rW$-equivariant local system 
$\mE_\theta$
on $T$ and use it to construct a character $D$-module $\mM_{\theta}$ on $G$. 
The main result of this section is an acyclicity property of $\mM_\theta$ (see Theorem \ref{Key}).
The proof uses the results of Drinfeld center of 
Harish-Chandra bimodules 
and character $D$-modules in
\cite{BFO}.

\subsection{Functors between equivariant categories}
Let $H$ be a smooth algebraic group acting on a smooth variety $Z$. 
We denote by $\mM_H(Z)$ (resp. $\mM_H(Z)_{hol}$) the category of $H$-equivaraint 
$D$-modules (resp. holonomic $D$-modules) on $Z$.
We denote by $D_H(Z)$ the $H$-equivariant derived category of 
$D$-modules on $Z$ and 
$D_H(Z)_{hol}$ the $H$-equivariant derived category of holonomic 
$D$-modules on $Z$. 

Let $f:Z\ra Z'$ be be a map between two smooth varieties. 
Assume $H$ acts on $Z$ and $Z'$ and $f$ is compatible with those $H$-actions.
Then the functors $f^*,f^!,f_*,f_!$ lift to functors between 
$D_H(Z)$ and $D_H(Z')$.

For any closed subgroup $H'\subset H$ 
the forgetful functor 
$\on{oblv}_{H'}^H:D_H(Z)\ra D_{H'}(Z)$ admits a
right adjoint 
\[\on{Ind}_{H'}^H:D_{H'}(Z)\ra D_{H}(Z)\]

Consider the quotient map $\pi_U:G\ra X=G/U$. It induces 
functors 
\beq\label{Av!}
\on{Av}_U:D_G(G)\stackrel{\on{oblv}_B^G}\ra D_B(G)\stackrel{(\pi_U)_*}\ra D_B(X)
\eeq
\[\on{Av}_{U!}:D_G(G)_{hol}\stackrel{\on{oblv}_B^G}\ra D_B(G)_{hol}\stackrel{(\pi_U)_!}\ra D_B(X)_{hol}\]
between equivaraint derived categories. Here $G$ acts on $G$ by 
the conjugation action. 
We call $\on{Av}_U$ (resp. $\on{Av}_{U!}$)
star averaging functor (resp. shriek averaging functor).
The functor $\on{Av}_U$ admits a right adjoint 
\[\on{Av}_{G}:=\on{Ind}_B^G\circ\pi_U^*:D_B(X)\ra D_B(G)\ra D_G(G).\]

We shall 
recall Lusztig's  induction and restriction functors.
Consider \[T=B/U\stackrel{r}\la B\stackrel{u}\ra G\]
We define 
\beq\label{Ind}
\Ind_{T\subset B}^G:=\Ind_B^G\circ u_*\circ r^!:D_T(T)\ra D_G(G),\ \ \ \Res_{T\subset B}^G:=r_*\circ u^!:D_G(G)\ra D_T(T).
\eeq

Here is an equivalent definition of $\Ind_{T\subset B}^G$:
consider the Grothendieck-Springer simultaneous resolution:
\beq\label{GS map}
\xymatrix{
\widetilde G\ar[d]^{\tilde q}\ar[r]^{\tilde c}&T\ar[d]^q&\\
G\ar[r]^c& T/\rW}
\eeq
where $\widetilde G$ consists of pairs $(g,hB)\in G\times G/B$ such 
that $h^{-1}gh\in B$, the map $\tilde c$ is given by $\tilde c(x,hB)=h^{-1}gh\on{mod} U$,
and $\tilde q, q$ are the natural projection maps. The group $G$ acts on
$\widetilde G$ by the formula $x(g,hB)=(xgx^{-1},xhB)$ and $\tilde q$ (resp. $\tilde c$)
is $G$-equivariant where $G$ acts on $G$ (resp. $T$) via the 
conjugation action (resp. trivial action).
We have 
\beq\label{Ind GS}
\Ind_{T\subset B}^G\is\tilde q_*\tilde c^*:D_T(T)\ra D_G(G).
\eeq

Consider the following maps \[G\stackrel{p}\la G\times G/B\stackrel{q}\ra Y/T=(G/U\times G/U)/T\]
where $p(g,xB)=g$ and $q(g,xB)=(gxU,xU)\on{mod} T$. 
The group $G$ acts on $G$, $G\times \mB$ and $Y/T$ by the formulas 
$a\cdot g=aga^{-1}$, $a\cdot(g,xU)=(aga^{-1},axB)$, $a(xU,yU)=(axU,ayU)$.
One can check that $p$ and $q$ are compatible with those $G$-actions.

Following \cite{MV}, we consider the functor 
\beq\label{HC}
\mathrm{HC}=q_*p^![-\dim G/B]:
D(G)\ra
D(Y/T).
\eeq The functor above admits a right adjoint 
$\mathrm{CH}=p_*q^*[\dim G/B]:D(Y/T)\ra D(G)$. 
We use the same notations for the corresponding functors between
$G$-equivariant derived categories
$D_G(G)$ and $D_G(Y/T)$.
Following \cite{G}, we call
$\mathrm{HC}$ the Harish-Chandra functor.

Recall the following well-known fact:
\begin{lemma}[Theorem 3.6 \cite{MV}]\label{CHHC}
\begin{enumerate}
\item
Let $\sigma:\widetilde\cN\ra\cN$ be the Springer resolution of the nilpotent cone $\cN$
and let $Sp:=\sigma_!\mO_{\widetilde\cN}$ be the Springer $D$-module. 
For any $\mF\in D(G)$
there is canonical isomorphism 
\[\mathrm{CH}\circ\mathrm{HC}(\mF)\is\mF*Sp.\]

\item
We have a canonical isomorphism 
$\mathrm{CH}\circ\mathrm{HC}\is\on{Av}_G\circ\on{Av}_U[-]$.

\item
The identity functor is a direct summand of 
$\mathrm{CH}\circ\mathrm{HC}\is\on{Av}_G\circ\on{Av}_U[-]$.

\end{enumerate}

\end{lemma}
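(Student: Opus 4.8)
My plan is to concentrate all the work in part (1) and deduce (2) and (3) from it. For (1) I would first unwind the definitions. The shifts $[-\dim G/B]$ and $[\dim G/B]$ in $\mathrm{HC}$ and $\mathrm{CH}$ cancel, so $\mathrm{CH}\circ\mathrm{HC}(\mF)\is p_*\,q^*q_*\,p^!\mF$. The composite $q^*q_*$ is then computed by base change along the Cartesian square whose upper-right corner is $W:=(G\times\mB)\times_{Y/T}(G\times\mB)$, with its two projections $\mathrm{pr}_1,\mathrm{pr}_2$ to $G\times\mB$; this base change is legitimate because $q$ is smooth — indeed, writing $G\times G/U\to G\times\mB$ and $G/U\times G/U\to Y/T$ for the evident $T$-torsors, $q$ is induced by the map $(g,xU)\mapsto(gxU,xU)$, which is a $U$-torsor. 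Hence $\mathrm{CH}\circ\mathrm{HC}$ is (up to shift) the functor $\mF\mapsto (P_2)_*(P_1)^!\mF$ attached to the correspondence $G\xleftarrow{P_1}W\xrightarrow{P_2}G$, where $P_i=p\circ\mathrm{pr}_i$.

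The heart of the argument is then a purely set-theoretic identification of $W$. If $q(g_1,x_1B)=q(g_2,x_2B)$ in $Y/T=(G/U\times G/U)/T$, then comparing the second coordinates forces $x_1B=x_2B$, and comparing the first coordinates then forces $x_1^{-1}g_2^{-1}g_1x_1\in U$. Setting $n:=g_2^{-1}g_1$ and letting $x$ denote the common coset, this gives
\[
W\;\is\;G\times\widetilde{\cN}_G,\qquad \widetilde{\cN}_G:=\{(n,xB)\in G\times\mB:\ x^{-1}nx\in U\},
\]
with $P_1(g_1,(n,xB))=g_1$ and $P_2(g_1,(n,xB))=g_1n^{-1}$. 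But $\widetilde{\cN}_G$ is precisely the (group) Springer variety, $\sigma\colon\widetilde{\cN}_G\to\cN_G$ is the Springer resolution (here $\cN_G\subset G$ is the unipotent variety, identified with $\cN\subset\fg$ via the $G$-equivariant exponential), and $Sp\is\sigma_!\mathcal{O}_{\widetilde{\cN}_G}$. Applying the inversion automorphism $(n,xB)\mapsto(n^{-1},xB)$ of $\widetilde{\cN}_G$, which fixes $Sp$ up to isomorphism, the correspondence becomes $(a,(n,xB))\mapsto(a,an)$, which is exactly the one through which $m_*\!\big((-)\boxtimes Sp\big)$ factors, $m\colon G\times G\to G$ being multiplication. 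Tracking the suppressed shifts and normalising $*$ accordingly yields $\mathrm{CH}\circ\mathrm{HC}(\mF)\is\mF*Sp$.

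For (2), I would re-express the same correspondence through $\pi_U$. Under the identification $D_G(G\times\mB)\is D_B(G)$ (restriction to the slice $G\times\{B\}$ with its residual $B$-action by conjugation), $p^!$ corresponds to $\on{oblv}_B^G$ and $p_*$ to $\Ind_B^G$, each up to shift; feeding in the Cartesian square relating $q$ to $\pi_U$ from part (1), the functor $p_*q^*q_*p^!$ translates into $\Ind_B^G\circ\pi_U^*\circ(\pi_U)_*\circ\on{oblv}_B^G=\on{Av}_G\circ\on{Av}_U$, up to shift — equivalently, the computation of part (1) shows directly that $\on{Av}_G\circ\on{Av}_U$ is itself the kernel-$Sp$ transform, so (2) follows from (1). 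For (3), granting (1) it suffices to exhibit $\delta_e$ (the unit of convolution) as a direct summand of $Sp$, for then $\id\is(-)*\delta_e$ is a direct summand of $(-)*Sp\is\mathrm{CH}\circ\mathrm{HC}$. This is classical Springer theory: $\sigma$ is semismall, so $Sp$ decomposes into IC-$D$-modules indexed, with multiplicities the dimensions of the corresponding $\rW$-representations, by the pairs in the image of the Springer correspondence, and the sign representation of $\rW$ is $1$-dimensional and matches the trivial local system on the zero orbit, whose IC-$D$-module is $\delta_e$.

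The only substantial point is the fiber-product computation in (1) — in particular the (slightly surprising) collapse $x_1B=x_2B$, after which $W$ visibly becomes $G\times\widetilde{\cN}_G$ — together with the bookkeeping of cohomological shifts needed to pin down the normalisation of $*$ (and of $Sp$) for which the isomorphism in (1) is strict. Once (1) is in hand, (2) and (3) are formal.
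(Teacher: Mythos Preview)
The paper does not supply a proof of this lemma: it is stated as a ``well-known fact'' and attributed to \cite[Theorem 3.6]{MV}, with no argument given. So there is nothing in the paper to compare your proposal against.

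That said, your sketch is essentially the argument of Mirkovi\'c--Vilonen. The key step --- computing the fiber product $W=(G\times\mB)\times_{Y/T}(G\times\mB)$ and observing that the equality of second coordinates in $Y/T$ forces $x_1B=x_2B$, whence $W\cong G\times\widetilde{\cN}_G$ --- is exactly their computation, and your deductions of (2) and (3) from (1) (via the identification $D_G(G\times\mB)\cong D_B(G)$ and the Springer decomposition of $Sp$, respectively) are the standard ones. One small point of caution: in (3) you want the \emph{trivial} representation of $\rW$, not the sign representation, to correspond to the skyscraper $\delta_e$ at the zero orbit under the usual normalization of the Springer correspondence (the sign representation corresponds to the constant sheaf on the full nilpotent cone); either way $\delta_e$ is a summand of $Sp$, so the conclusion stands, but the labeling should be corrected.
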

We will need the following properties of induction functors.
\begin{prop}[Theorem 2.5 and Proposition 2.9 in \cite{BK1}]\label{properties of ind}
\begin{enumerate}
\item For any local system $\mF$ on $T$, we have 
$\on{Ind}_{T\subset B}^G(\mF)\in\mM_G(G)$. 

\item 
Let $\mF$ be a local system on $T$.
For any $w\in\rW$, there is a
canonical isomorphism 
\[\on{Ind}^G_{T\subset B}(\mF)\is\on{Ind}_{T\subset B}^G(w^*\mF).\]
\item
Let $W'\subset W$ be a subgroup and 
$\mF$ be a $\rW'$-equivariant local system on $T$. There is a canonical $\rW'$-action 
on $\Ind_{T\subset B}^G(\mF)$.
\item
Let 
$\mF\in D(T)$ and 
$\mG\in D_G(G)$. Assume $\mF':=\on{Av}_U(\mG)$ is supported on $T=B/U$.
There is an isomorphism 
\[
\Ind_{T\subset B}^G(\mF)*\mG
\ra\Ind_{T\subset B}^G(\mF*\mF').
\]

\quash{
\item For any $\mF,\mF'\in D(T)_h$, there is a canonical map
\beq\label{semi group st}
\Ind_{T\subset B}^G(\mF)*\Ind_{T\subset B}^G(\mF')\ra\Ind_{T\subset B}^G(\mF*\mF').
\eeq
Moreover, assume 
$\mF,\mF'$, and 
$\mF*\mF'$ are $\rW$-equivariant local systems on $T$.
Consider the $\rW$-action (resp. diagonal $\rW$-action) on 
$\Ind_{T\subset B}^G(\mF*\mF')$ (resp. $\Ind_{T\subset B}^G(\mF)*\Ind_{T\subset B}^G(\mF')$)
in part (3).
Then the map (\ref{semi group st}) above is compatible with
those $\rW$-actions.

\item
Let $\mG\in D_G(G)$. Assume $\mF:=\on{Av}_U(\mG)$ is supported on $T=B/U$.
For any $\mF'\in D(T)$, the composition 
\beq\label{convolution}
\mG*\Ind_{T\subset B}^G(\mF')
\ra\Ind_{T\subset B}^G(\mF)*\Ind_{T\subset B}^G(\mF')
\stackrel{(\ref{semi group st})}\ra\Ind_{T\subset B}^G(\mF*\mF')
\eeq
is an isomorphism, where the first map is induced by the
canonical map \[\mG\ra\on{Av}_G\circ\on{Av}_U(\mG)\is\Ind_{T\subset B}^G(\mF)\]
in part (3) of Lemma \ref{CHHC} .}

 \end{enumerate}
\end{prop}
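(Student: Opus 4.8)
The plan is to reduce everything to the Grothendieck–Springer realisation $\Ind_{T\subset B}^G\is\tilde q_*\tilde c^*$ of (\ref{Ind GS}) and then apply standard base change and projection formulae for the six functors; parts (1)–(3) will come out of the geometry of $\tilde q\colon\widetilde G\ra G$, and only part (4) will use the support hypothesis in an essential way. (All four are \cite[Theorem 2.5 and Proposition 2.9]{BK1}; below I only indicate the arguments.) For (1) I would use that $\widetilde G=G\times_B B$ with $B$ acting on itself by conjugation, so $\tilde c$ is obtained from the smooth affine quotient $B\ra B/U=T$ by applying $G\times_B(-)$ and is therefore smooth; hence $\tilde c^*\mF$ is, with the normalisation fixed in (\ref{Ind}), a local system on $\widetilde G$. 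Moreover $\tilde q$ is proper (since $G/B$ is) and \emph{small} — over $G_{rs}$ it is the finite étale $\rW$-cover $\widetilde G_{rs}\ra G_{rs}$, and the Grothendieck–Springer fibres over the remaining strata satisfy the usual codimension estimates — so $\tilde q_*\tilde c^*\mF$ is perverse, i.e. a single $D$-module, being the intermediate extension of $(\tilde q|_{G_{rs}})_*\tilde c^*\mF|_{G_{rs}}$; it is visibly $G$-equivariant, hence in $\mM_G(G)$.

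For (2) I would observe that both $\Ind_{T\subset B}^G(\mF)$ and $\Ind_{T\subset B}^G(w^*\mF)$ are, by (1), intermediate extensions of local systems on $G_{rs}$, and that these two local systems are canonically identified by the $\rW$-torsor structure on $\widetilde G_{rs}\ra G_{rs}$, under which $\tilde c|_{G_{rs}}$ intertwines the deck transformations with the natural $\rW$-action on $T$; one then checks the resulting isomorphism is multiplicative in $w$ (this also expresses the independence of $\Ind_{T\subset B}^G$ of the Borel containing $T$). Granting (2), part (3) is formal: a $\rW'$-equivariant structure on $\mF$ is a compatible family of isomorphisms $\mF\is w^*\mF$, $w\in\rW'$, and post-composing with the isomorphisms of (2) gives a compatible family of automorphisms of $\Ind_{T\subset B}^G(\mF)$ — a $\rW'$-action — the cocycle identity following from multiplicativity in $w$.

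For (4) I would write $\Ind_{T\subset B}^G(\mF)=\Ind_B^G(u_*r^!\mF)$ and use the projection formula $\Ind_B^G(\cN)*\mG\is\Ind_B^G(\cN*\on{oblv}_B^G\mG)$ for $\cN\in D_B(G)$ and $\mG\in D_G(G)$ (right-hand convolution in $D_B(G)$ via the restricted $G$-action) — a standard consequence of base change along the convolution map $G\times G\ra G$, valid because $\Ind_B^G$ is a proper pushforward ($G/B$ being proper). This gives $\Ind_{T\subset B}^G(\mF)*\mG\is\Ind_B^G\big((u_*r^!\mF)*\on{oblv}_B^G\mG\big)$, and it remains to identify $(u_*r^!\mF)*\on{oblv}_B^G\mG$ with $u_*r^!(\mF*\mF')$ in $D_B(G)$, after which $\Ind_B^G$ finishes the proof. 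Since $u_*r^!\mF$ is supported on $B$ and pulled back from $T=B/U$, the convolution in question is the pushforward of $\mF\boxtimes\on{oblv}_B^G\mG$ along $(b,g)\mapsto bg\colon B\times G\ra G$; identifying the fibre of this map over $x\in G$ with $B$ and using the conjugation-equivariance of $\mG$ to trade left for right $U$-averaging, the hypothesis that $\on{Av}_U\mG=(\pi_U)_*\on{oblv}_B^G\mG$ is supported on $T=B/U\subset G/U$ should force the answer to be supported on $B$ and to compute $u_*r^!$ of $\mF*(\on{Av}_U\mG)=\mF*\mF'$.

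The main obstacle will be exactly this last identification. Without the support hypothesis the convolution $(u_*r^!\mF)*\on{oblv}_B^G\mG$ genuinely lives on $B\cdot\mathrm{supp}(\mG)$, which is all of $G$ for generic $\mG$; and because de Rham pushforward along the affine-space fibres of $\pi_U\colon G\ra G/U$ does not reflect supports, the hypothesis ``$\on{Av}_U\mG$ supported on $T$'' is strictly weaker than ``$\on{oblv}_B^G\mG$ supported on $B$''. One must therefore argue carefully — by base change along $T\stackrel{r}{\la}B\stackrel{u}{\ra}G$ and its product with $\mG$, and using the $B$-orbit stratification of $G/U$ — that support on $T\subset G/U$ forces the convolution to be the clean extension from $B$ of the computation over $T$; the rest is bookkeeping with the six functors and the equivariant structures. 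Alternatively, one can route part (4) through the Harish-Chandra formalism of Lemma \ref{CHHC}: the hypothesis yields $\on{Av}_G\on{Av}_U\mG\is\Ind_{T\subset B}^G(\mF')$ (using $\on{Av}_G=\Ind_B^G\circ\pi_U^*$), one feeds the unit $\mG\ra\on{Av}_G\on{Av}_U\mG$ into the natural map $\Ind_{T\subset B}^G(\mF)*\Ind_{T\subset B}^G(\mF')\ra\Ind_{T\subset B}^G(\mF*\mF')$, and then checks the composite is an isomorphism — again the support hypothesis being precisely what makes this so.
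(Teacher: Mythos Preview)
The paper does not supply its own proof of this proposition: it is stated as a citation of \cite[Theorem 2.5 and Proposition 2.9]{BK1} and is used as a black box thereafter. So there is nothing in the paper to compare your argument against, and your proposal is really a reconstruction of the Braverman--Kazhdan arguments.

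That said, your arguments for (1)--(3) via the Grothendieck--Springer realisation, smallness of $\tilde q$, and the $\rW$-torsor structure over $G_{rs}$ are the standard ones and are correct. For (4), both routes you sketch are viable; the second (feeding the unit $\mG\to\on{Av}_G\on{Av}_U\mG\is\Ind_{T\subset B}^G(\mF')$ into the natural map $\Ind_{T\subset B}^G(\mF)*\Ind_{T\subset B}^G(\mF')\to\Ind_{T\subset B}^G(\mF*\mF')$ and checking it is an isomorphism) is closer in spirit to how the statement is formulated and used in \cite{BK1}, and avoids the support bookkeeping you flag as the main obstacle in the first approach. In that direction the key point is that the map $\Ind_{T\subset B}^G(\mF)*\Ind_{T\subset B}^G(\mF')\to\Ind_{T\subset B}^G(\mF*\mF')$ becomes an isomorphism after restriction to $G_{rs}$ (where everything reduces to convolution on $T$ via the $\rW$-cover), and since both sides are intermediate extensions by (1) this suffices; the support hypothesis on $\on{Av}_U\mG$ is what makes the unit map $\mG\to\Ind_{T\subset B}^G(\mF')$ an isomorphism (again by comparing on $G_{rs}$ and using that $\mG$, being a direct summand of $\on{Av}_G\on{Av}_U\mG$ by Lemma \ref{CHHC}(3), is itself an intermediate extension when $\on{Av}_U\mG$ is supported on $T$).
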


\subsection{Hecke categories}
Consider the left $G$ and right $T\times T$ actions on $Y=G/U\times G/U$.
To every $\xi,\xi'\in\breve T\is\breve\ft/\Lambda$ 
we denote by $M_{\xi,\xi'}$ the category of $G$-equivariant 
$D$-modules on
$G/U\times G/U$ which are $T\times T$-monodromic 
with 
generalized monodromy $(\xi,\xi')$, that is, 
$U(\ft)\otimes U(\ft)$ (acting as infinitesimal
translations along the right action of $T\times T$) acts 
locally finite with generalized eigenvalues in $(\xi,\xi')$.
Consider the quotient  $Y/T$ where $T$ acts diagonally from the right.
The group $T$ acts on $Y/T$ via the formula $t(xU,yU)\on{mod }T=
(xU,ytU)\on{mod} T$. To every $\xi\in\breve T$ we denote by
$M_\xi$ the category of $G$-equivariant $T$-monodromic $D$-modules on 
$Y/T$ with generalized monodromy $\xi$.
We write $D(M_{\xi,\xi'})$ and $D(M_\xi)$ for the corresponding 
$G$-equivariant monodromic derived categories.

The groups $B$ and $T\times T$ act on $X=G/U$ by the 
formula $b(xU)=bxb^{-1}U$, $(t,t')(xU)=txt'U$.
For any $(\xi_1,\xi_2)\in\breve T\times\breve T$ we write 
$H_{\xi_1,\xi_2}$ for the category of $U$-equivariant $T\times T$-monodromic 
$D$-modules on $X$ with generalized monodromy $(\xi_1,\xi_2)$.
For any $\xi\in\breve T$ we write 
$H_{\xi}$ 
for the category of $B$-equivariant $T$-monodromic
$D$-modules on $X$ 
with generalized monodromy $\xi$, where $B$ acts 
on $X$ by the same formula as before and  
$T$ acts on $X$ by the formula $t(xU)=txU$.
We denote by $D(H_\xi)$ (resp.
$D(H_{\xi_1,\xi_2})$)  the corresponding $B$-equivaraint (resp. $U$-equivaraint)
monodromic derived category.

Consider the embedding $i:X\ra Y, gU\ra (eU,gU)$.  
\begin{lemma}\cite{MV}\label{M and H}
\begin{enumerate}
\item The functor $i^0=i^![\dim X]:D_G(Y)\ra D_U(X)$ is an equivalence of categories with inverse 
givne by $(i^0)^{-1}:=\on{Ind}_B^G\circ i_*[\on{dim}G-\on{dim}B]$.
\item 
We have $i^0\on{HC}\is\on{Av}_U$.
\end{enumerate}
\end{lemma}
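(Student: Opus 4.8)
The plan is to obtain (1) as an instance of the standard induction equivalence for equivariant derived categories, and then to derive (2) by a base change applied to the definition of $\on{HC}$. For (1): the first projection $p_1\colon Y=G/U\times G/U\to G/U$ is $G$-equivariant, $G$ acting diagonally on $Y$ and by left translation on $G/U$. Since $G$ acts transitively on $G/U$ with stabilizer $U$ at $eU$, the map $(gU,hU)\mapsto[g,g^{-1}hU]$ identifies $Y$ with the associated bundle $G\times^U X$, identifies the fiber $p_1^{-1}(eU)$ with $X$ equipped with its residual $U$-action (which for $U\subset G$ agrees with left translation on $G/U$), and identifies $i\colon X\hookrightarrow Y$ with the inclusion of this fiber. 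The general fact that $*$-restriction to the fiber over $eU$ is an equivalence $D_G(G\times^U X)\is D_U(X)$ then shows $i^*\colon D_G(Y)\ra D_U(X)$ is an equivalence, and since $i$ is the inclusion of a smooth fiber of $p_1$ of relative dimension $\dim X$, the normalized functor $i^0=i^![\dim X]$ differs from $i^*$ only by a cohomological shift and is therefore an equivalence as well. Its inverse is the corresponding induction functor; matching it with the stated formula $\on{Ind}_B^G\circ i_*[\dim G-\dim B]$ is a bookkeeping check, using $\on{Ind}_U^G\is\on{Ind}_B^G\circ\on{Ind}_U^B$ together with the normalizing shift.

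For (2): recall $\on{Av}_U=(\pi_U)_*\circ\on{oblv}_B^G$ with $\pi_U\colon G\ra X=G/U$, and $\on{HC}=q_*\,p^![-\dim\mB]$ with $p\colon G\times\mB\ra G$ and $q\colon G\times\mB\ra Y/T$. I would compute $i^0\circ\on{HC}$ by base change along the Cartesian square obtained by pulling $q$ back along $i\colon X\ra Y/T$: put $Z=(G\times\mB)\times_{Y/T}X$ with projections $p'\colon Z\ra G$ and $q'\colon Z\ra X$. The geometric heart of the argument is that the fiber-product condition forces the flag coordinate of a point of $Z$ to be uniquely determined by its $G$-coordinate, so that $p'$ is an isomorphism $Z\is G$, under which $q'$ becomes the quotient map $\pi_U$ (after the identification of $\img(i)$ with $X$). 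Base change $i^!q_*\is q'_*\,i'^!$, together with the cancellation of the various shifts — the $[-\dim\mB]$ in $\on{HC}$, the $[\dim X]$ in $i^0$, and the relative-dimension shifts in the square — then yields $i^0\on{HC}(\mathcal M)\is(\pi_U)_*\on{oblv}_B^G\mathcal M=\on{Av}_U(\mathcal M)$, naturally in $\mathcal M$ and compatibly with the $G$- (hence $B$-) equivariant structures. Alternatively, one can deduce (2) from (1) and Lemma \ref{CHHC}: using $\on{CH}\circ\on{HC}\is\on{Av}_G\circ\on{Av}_U[-]$, $(i^0)^{-1}=\on{Ind}_B^G\circ i_*[\dim G-\dim B]$ and $\on{Av}_G=\on{Ind}_B^G\circ\pi_U^*$, the claim reduces to the identity $\on{CH}\circ(i^0)^{-1}\is\on{Av}_G[-]$ — again the same base-change computation — followed by passing to left adjoints, since $i^0$ is an equivalence.

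The main difficulty is organizational rather than conceptual: one must track the three relevant equivariance structures ($G$-equivariance, $B$-equivariance, and $T\times T$- or $T$-monodromicity) through every functor, verify that all cohomological shifts and twists by relative dualizing complexes cancel exactly as claimed, and pin down the precise identification of $Z$ with $G$ — in particular any inversion $g\mapsto g^{-1}$ or transposition of the two $G/U$-factors hidden in the conventions — so that the map $q'\colon Z\ra X$ really is $\pi_U$ and not a twist of it.
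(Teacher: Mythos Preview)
The paper does not give a proof of this lemma; it is stated with a citation to \cite{MV}. There is therefore nothing in the paper to compare your argument against.

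Your approach is the standard one and is correct in substance. For (1), the identification $Y\simeq G\times^U X$ via $(gU,hU)\mapsto[g,g^{-1}hU]$ reduces the claim to the induction equivalence $D_G(G\times^U X)\simeq D_U(X)$, with $i$ the inclusion of the fiber over $eU$; the passage from $i^*$ to $i^0$ and the matching of the inverse with $\on{Ind}_B^G\circ i_*[\dim G-\dim B]$ are indeed just shift bookkeeping together with $\on{Ind}_U^G\simeq\on{Ind}_B^G\circ\on{Ind}_U^B$. For (2), the base-change square you set up does collapse as claimed: the condition $(gxU,xU)\equiv(eU,hU)\bmod T$ forces first $xB=g^{-1}B$ (since $gx\in B$) and then $hU=g^{-1}U$, so $p'\colon Z\to G$ is an isomorphism. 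This confirms the inversion you anticipated --- with the paper's conventions $q'$ becomes $g\mapsto g^{-1}U$, i.e.\ $\pi_U\circ\iota$ rather than $\pi_U$. Whether that discrepancy is absorbed by a convention in \cite{MV} or is a harmless imprecision in the paper's transcription would require consulting the original source; your identification of this as the one point that must be pinned down is exactly right.
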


We have the convolution product
$D_G(Y)\times D_G(Y)\ra D_G(Y)$ given by $(\mF,\mF')\ra (p_{13})_*(p_{12}^*\mF\otimes p_{23}^*\mF')$.
Here $p_{ij}:G/U\times G/U\times G/U\ra Y=G/U\times G/U$
is the projection on the $(i,j)$-factors. 
The convolution product on $D_G(Y)$ restricts to a convolution product on 
$D(M_{\xi,\xi^{-1}})$. 
The equivalence $i^0:D_G(Y)\is D_U(X)$
above induces convolution products on $D_U(X)$ and $D(H_{\xi,\xi})$. 
In addition, there is an action of $D_U(X)$ on $D(X)$ by right convolution. 
The convolution operation will be denoted by $*$.

\quash{
\begin{lemma}[Proposition 9.2.1 \cite{G}]\label{central}
For any $\cM\in D_G(G)$, $\cF\in D_G(Y)$, and $\cF'\in D_U(X)$ we have 
\[\on{HC}(\cM)*\cF\is\cF*\on{HC}(\cM),\ \  \on{Av}_U(\cM)*\cF'\is\cF'*\on{Av}_U(\cM).\]
\end{lemma}
}

\quash{
\subsection{(Pro) local system $\hat\mL_\xi$}
Let $\hat\mL$ be the \emph{pro-unitpotnet} local system on $T$, that is, 
$\hat\mL=\underleftarrow{\on{lim}}\ \mL_n$ where $\mL_n$ is the local system whose fiber at the unit
element $e\in T$ is identified with 
$\on{Sym}(\ft)/\on{Sym}(\ft)_+^n$
, where the log monodromy action of $\pi_1(T)$
coincides with the 
restriction of 
natural $\on{Sym}(\ft)$-module structure to $\pi_1(T)\subset
\pi_1(T)\otimes\bC\is
\ft$.
Let $\xi\in\breve T$ and $\mL_\xi$ be the corresponding Kummer local system on $T$. We define 
the following pro-local system
\[\hat\mL_\xi=\underleftarrow{\on{lim}}(\mL_n\otimes\mL_\xi).\]

}

We will need the following lemma. 
Let $X$ be an algebraic variety with an action 
of an affine algebraic group $G$. Denote the 
action map by $a:G\times X\ra X$ .
\begin{lemma}[Lemma 2.1 \cite{BFO}]\label{action}
For any 
$\cA\in D(G)$, $\cF\in D(X)$ 
We have a canonical isomorphism 
\[R\Gamma(a_*(\mA\boxtimes\mF))\is R\Gamma(\mA)\otimes^L_{U(\fg)}R\Gamma(\cF).\]
\end{lemma}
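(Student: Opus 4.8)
The statement to be proved is Lemma 2.1 of \cite{BFO} as re-stated in the excerpt: for a group $G$ acting on $X$ via $a:G\times X\to X$, and $\cA\in D(G)$, $\cF\in D(X)$, there is a canonical isomorphism $R\Gamma(a_*(\cA\boxtimes\cF))\cong R\Gamma(\cA)\otimes^L_{U(\fg)}R\Gamma(\cF)$. The plan is to reduce the computation of $R\Gamma$ of a pushforward along $a$ to a derived tensor product by combining (i) the projection/base-change identity relating $R\Gamma\circ a_*$ on $G\times X$ with $R\Gamma$ on $X$, and (ii) the standard description of $R\Gamma$ of a $D$-module on $G$ as a complex of $U(\fg)$-modules together with a Koszul-type resolution.

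**Main steps.** First I would observe that for any map $f:Z\to Z'$ of smooth varieties, $R\Gamma_{Z'}\circ f_* = R\Gamma_Z$ (both compute derived global sections, viewing $D$-modules as quasi-coherent sheaves and using that $f_*$ at the level of $D$-modules is the derived pushforward of the underlying $\cO$-modules, suitably twisted). Hence $R\Gamma(a_*(\cA\boxtimes\cF)) \cong R\Gamma_{G\times X}(\cA\boxtimes\cF)$, and the content of the lemma is the identification of the right-hand side with $R\Gamma(\cA)\otimes^L_{U(\fg)}R\Gamma(\cF)$ where the $U(\fg)$-action on $R\Gamma(\cF)$ comes from the action map. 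Second, I would recall that for a $D$-module $\cM$ on the group $G$, the space $R\Gamma(G,\cM)$ is computed by the de Rham complex, which — using left-invariant vector fields to trivialize $T^*G$ — is the Chevalley--Eilenberg / Koszul complex $\Lambda^\bullet\fg^*\otimes \Gamma(G,\cM)$; but the key structural fact is that $R\Gamma_{dR}(G,\mO_G)\cong \bC$ concentrated in degree $0$ (for $G$ connected; in general one takes care of components), and more usefully that $\mO_G$, as a left $D_G$-module, is "free of rank one" so that the de Rham complex of $\cA$ is the Koszul complex of $\fg$ acting on the underlying $\cO_G$-module $\Gamma(\cA)$. Third, on $G\times X$ the $D$-module $\cA\boxtimes\cF$ has de Rham complex the tensor product of the two de Rham complexes; pushing to the point and using the $G$-direction, the $\fg$ acting via left-invariant fields on the $G$-factor is simultaneously (after the diagonal identification used to define the $G$-action on $X$) the $\fg$ acting on $R\Gamma(\cF)$. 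This is precisely the mechanism that turns $R\Gamma_{G\times X}(\cA\boxtimes\cF)$ into $R\Gamma(\cA)\otimes^L_{U(\fg)}R\Gamma(\cF)$: the Koszul complex $\Lambda^\bullet\fg^*\otimes(-)$ computing $R\Gamma$ in the $G$-direction is exactly the standard complex computing $\otimes^L_{U(\fg)}$ when one of the factors is the "free" module $R\Gamma(\cA)$ with its residual $U(\fg)$-action.

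**Carrying it out / the obstacle.** Concretely I would: (a) reduce to $\cA = \mO_G$ by noting $\mO_G$ generates $D(G)$ under the $D_G$-action and $R\Gamma$, $a_*$, $\otimes^L_{U(\fg)}$ all commute with the relevant colimits and the $D_G$-module structure; for $\cA=\mO_G$ both sides become just $R\Gamma(X,\cF)$ after checking $R\Gamma(\mO_G)\otimes^L_{U(\fg)}(-)\cong(-)$ using the Koszul resolution $\mO_G$ of $\bC$ over $U(\fg)$ in the category of $\fg$-modules (equivalently, that $H^\bullet_{dR}(G)$ is $\bC$ in degree $0$ when $G$ is connected — for disconnected $G$ one does this component by component and the formula still holds since $R\Gamma(\cA)$ records all components); (b) then bootstrap to general $\cA$ by functoriality in $\cA$, using that the identification for $\cA=\mO_G$ is compatible with the $D_G(G)$-action (convolution), which upgrades the $\cA=\mO_G$ case to all $\cA$ of the form $\cD_G\otimes(-)$ and hence, by taking resolutions, to all $\cA\in D(G)$. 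The main obstacle I anticipate is the careful bookkeeping in step (b): making the isomorphism genuinely canonical and functorial — i.e. checking that the Koszul/de Rham identification intertwines the left $D_G$-module structure on $\cA$ with the $U(\fg)$-module structure on $R\Gamma(\cA)$ in a way compatible with the action map $a$, so that the two appearances of $\fg$ (left-invariant vector fields on $G$ versus infinitesimal action on $X$) are matched correctly with the right sign/side conventions. Everything else is the standard manipulation of de Rham complexes on a group and the identification of the Chevalley--Eilenberg complex with $\mathrm{Tor}^{U(\fg)}$.
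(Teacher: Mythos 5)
The paper itself offers no proof of this statement---it is imported verbatim from [BFO, Lemma 2.1]---so your proposal can only be judged on its own merits. The mechanism you have in mind in your second and third steps (a Koszul/Chevalley--Eilenberg complex in the $G$-direction realizing the standard complex for $\otimes^L_{U(\fg)}$) is indeed the right one, but two of your concrete assertions are false, and the first would actually contradict the lemma.

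Your opening identity $R\Gamma_{Z'}\circ f_*=R\Gamma_Z$ is wrong for the $D$-module pushforward: $f_*\cM=Rf_\bullet(\mathcal{D}_{Z'\leftarrow Z}\otimes^L_{\mathcal{D}_Z}\cM)$ incorporates the relative de Rham complex along the fibers, which changes global sections (for $f:\bA^1\ra\on{pt}$ and $\cM=\cO_{\bA^1}$ one gets $R\Gamma(f_*\cO)=\bC$ while $R\Gamma(\cO)=\bC[x]$). If your step 1 held, the left side of the lemma would be $R\Gamma_{G\times X}(\cA\boxtimes\cF)\is R\Gamma(\cA)\otimes^L_{\bC}R\Gamma(\cF)$ by K\"unneth, not a tensor product over $U(\fg)$; the entire content of the lemma lives in the discrepancy you discard there. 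The correct route is to identify $a$ with $\on{pr}_X$ via the shearing automorphism $(g,x)\mapsto(g,gx)$ of $G\times X$, note that the vector fields along the fibers of $a$ are then $\xi^L\otimes 1-1\otimes\xi_X$ for $\xi\in\fg$ (left-invariant field on $G$ minus the action field on $X$), and recognize the resulting relative de Rham complex, after taking $\cO$-module global sections, as the Chevalley--Eilenberg complex $\Lambda^\bullet\fg\otimes R\Gamma(\cA)\otimes R\Gamma(\cF)$ for the diagonal action, which is the standard complex computing $R\Gamma(\cA)\otimes^L_{U(\fg)}R\Gamma(\cF)$. Your second error breaks the base case of your d\'evissage: $H^\bullet_{\text{dR}}(G)$ is \emph{not} $\bC$ in degree $0$ for connected $G$ unless $G$ is unipotent ($H^\bullet_{\text{dR}}(\bG_m)=\bC\oplus\bC[-1]$, $H^\bullet_{\text{dR}}(SL_2)=\bC\oplus\bC[-3]$), and correspondingly $R\Gamma(\cO_G)\otimes^L_{U(\fg)}N\not\is N$; for $\cA=\cO_G$ the lemma asserts $R\Gamma(a_*(\cO_G\boxtimes\cF))\is\cO(G)\otimes^L_{U(\fg)}R\Gamma(\cF)$, which is not $R\Gamma(\cF)$. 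The standard reduction is not to $\cO_G$ but to induced modules $\cA=\mathcal{D}_G\otimes_{\cO_G}V$, for which $R\Gamma(\cA)\is U(\fg)\otimes_{\bC}\Gamma(V)$ is free over $U(\fg)$ and both sides are computed directly; such objects do generate $D(G)$.
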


\subsection{Character $D$-modules} 
We denote by $CS(G)$ the 
category of finitely generated $G$-equivaraint $D$-modules on $G$ such that 
the action of the center $Z\subset U(\fg)$, embedding as left invariant 
differential operators, is locally finite.
To every $\theta\in\breve T/\rW=\breve\ft/\rW_a$,
we denote by $CS_\theta(G)$ the 
category of finitely generated $G$-equivaraint $D$-modules on $G$ such that 
the action of the center $Z\subset U(\fg)$ is locally finite and has generalized eigenvalues in $\theta$.
We denote by 
$D(CS(G))$ (resp. $D(CS_\theta)$)
 the minimal triangulated full subcategory of 
$D_G(G)$ containing all objects $\mM\in D_G(G)$ such that $\mH^i(\mM)\in CS(G) $
(resp. $\mH^i(\mM)\in CS_\theta(G)$ ).
We call 
$CS(G)$ and $CS(G)_\theta$ (resp. $D(CS(G))$ and $D(CS_\theta)$) 
the category (resp. derived category)
of 
character $D$-modules on $G$ and 
character $D$-modules on $G$ 
with generalized central character $\theta$. 

We have the following:

\begin{proposition}\label{CS}
\begin{enumerate}
\item
Let $\xi\in\breve T$ be a lifting of $\theta$.
Then 
$D(CS_\theta(G))$ is generated by the image of 
$D(H_{\xi})$ (resp. $D(M_\xi)$) under the functor $\on{Av}_G:D_B(X)\ra D_G(G)$
(resp. $\on{CH}:D_G(Y/T)\ra D_G(G)$). 

\item 
Let $\mG\in CS(G)_\theta$. We have 
\[\on{HC}(\mG)\in\bigoplus_{\xi\in\breve T,\xi\ra\theta}D(M_{\xi}),\ \ (resp.\ \on{Av}_U(\mG)\in
\bigoplus_{\xi'\breve T,[\xi']=\theta}D(H_\xi).
)\]

\item 
The functors $\Ind_{T\subset B}^G$ and $\Res_{T\subset B}^G$ 
preserve the derived categories of character $D$-modules. Moreover,
the resulting functors $\Ind_{T\subset B}^G:D(CS(T))\ra D(CS(G))$,
$\Res_{T\subset B}^G:D(CS(G))\ra D(CS(T))$  
are independent of the choice of the Borel subgroup $B$ and 
t-exact with respect to the natural $t$-structures on $D(CS(G))$
and $D(CS(T))$. 

\item
Let $CS_{[T]}(G)\subset CS(G)$ be the full subcategory generated by 
the image of $\Ind_{T\subset B}^G:CS(T)\ra CS(G)$.
For any $\mG\in CS_{[T]}(G)$, the local system
$\mF=\Res_{T\subset B}^G(\mG)\in CS(T)$ carries a canonical 
$\rW$-equivariant structure, moreover, there is a canonical isomorphism 
\[
\Ind_{T\subset B}^G(\mF)^{\rW}\is\mG.
\]
Here $\Ind_{T\subset B}^G(\mF)^{\rW}$ is the $\rW$-invaraint factor of
$\Ind_{T\subset B}^G(\mF)$
for the $\rW$-action constructed in Proposition \ref{properties of ind}.

\end{enumerate}

\end{proposition}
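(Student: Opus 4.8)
The plan is to deduce all four statements from the structural results of \cite{BFO} on the Drinfeld center of Harish-Chandra bimodules, together with Lemma \ref{CHHC}, Lemma \ref{M and H}, and Proposition \ref{properties of ind}. For part (1), I would argue as follows. By Lemma \ref{CHHC}(3) the identity functor is a direct summand of $\on{CH}\circ\on{HC}\is\on{Av}_G\circ\on{Av}_U[-]$, so every $\mG\in D(CS_\theta(G))$ is a direct summand of $\on{Av}_G(\on{Av}_U(\mG))$. Thus it suffices to show $\on{Av}_U(\mG)$, for $\mG\in CS_\theta(G)$, lies in the subcategory generated by $D(H_\xi)$ over liftings $\xi$ of $\theta$ — which is exactly the content of part (2). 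So I would prove (2) first: the central character condition on $\mG$ translates, under $\on{Av}_U$ (equivalently $i^0\circ\on{HC}$ by Lemma \ref{M and H}(2)), into a constraint on the $U(\ft)$-monodromy via the Harish-Chandra homomorphism $Z\is U(\ft)^\rW$; the generalized $Z$-eigenvalue $\theta\in\breve\ft/\rW_a$ decomposes the $T$-monodromic $D$-module into summands indexed by the finitely many lifts $\xi\in\breve T$ of $\theta$, giving the direct sum decomposition into the $D(H_\xi)$ (resp.\ $D(M_\xi)$). The equivalence $i^0:D_G(Y)\is D_U(X)$ of Lemma \ref{M and H}(1) matches the $M_\xi$ and $H_\xi$ descriptions. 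Conversely, applying $\on{Av}_G$ (resp.\ $\on{CH}$) to an object of $D(H_\xi)$ (resp.\ $D(M_\xi)$) lands in $D(CS_\theta(G))$ because $\on{Av}_G=\on{Ind}_B^G\circ\pi_U^*$ and $\on{Ind}_B^G$ preserves $G$-equivariance while the monodromy bound is preserved under these functors; combined with the surjectivity just established, this proves (1).

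For part (3), the key input is that $\Ind_{T\subset B}^G\is\tilde q_*\tilde c^*$ via the Grothendieck–Springer resolution \eqref{Ind GS}, and that $\tilde c$ is smooth while $\tilde q$ is proper. I would check that $\tilde c^*$ of a character $D$-module on $T$ has locally finite $Z(U(\ft))$-action, that this is carried by $\tilde q_*$ to a locally finite $Z(U(\fg))$-action (using the Harish-Chandra homomorphism, which identifies the central characters compatibly with the Springer picture), and that holonomicity and finite generation are preserved since all maps are finite-type morphisms of smooth varieties. For $\Res_{T\subset B}^G=r_*u^!$ one argues dually. Independence of the choice of $B$ follows because any two Borels are conjugate and $\Ind$, $\Res$ are defined via $G$-equivariant functors; alternatively one invokes the corresponding independence statement in \cite{BFO}. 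The $t$-exactness is the crucial point: here I would cite \cite[Corollary 3.4]{BFO} (the exactness of the twisted Harish-Chandra functor), which transports to $t$-exactness of $\Ind_{T\subset B}^G$ on character $D$-modules through the equivalence between character $D$-modules and the Drinfeld center; $t$-exactness of $\Res$ is then formal by adjunction and the fact that it is also exact on the other side.

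For part (4), let $\mG\in CS_{[T]}(G)$, so $\mG$ is a summand of $\Ind_{T\subset B}^G(\mF_0)$ for some $\mF_0\in CS(T)$. Applying $\Res_{T\subset B}^G$ and using the standard computation $\Res_{T\subset B}^G\circ\Ind_{T\subset B}^G(\mF)\is\bigoplus_{w\in\rW} w^*\mF$ (the $D$-module incarnation of the Mackey/Weyl-integration formula over the Grothendieck–Springer resolution, using \eqref{GS map}), one sees $\mF:=\Res_{T\subset B}^G(\mG)$ inherits a $\rW$-equivariant structure: the $\rW$-action permutes the summands and the projector defining $\mG$ is $\rW$-invariant by Proposition \ref{properties of ind}(2). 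Then $\Ind_{T\subset B}^G(\mF)$ carries the $\rW$-action of Proposition \ref{properties of ind}(3), and the counit/unit of the $(\Ind,\Res)$-adjunction gives a map $\Ind_{T\subset B}^G(\mF)^\rW\to\mG$ (or its inverse); one checks it is an isomorphism by reducing, via $t$-exactness from part (3) and the decomposition into central characters from (2), to the statement for a single Kummer sheaf on $T$, where it follows from the explicit description of $\Ind_{T\subset B}^G$ of a Kummer local system. \emph{The main obstacle} I anticipate is the $t$-exactness assertion in part (3): establishing it honestly requires translating the problem into the language of \cite{BFO} — identifying $\Ind_{T\subset B}^G$ on character $D$-modules with the induction functor on the Drinfeld center of Harish-Chandra bimodules and then invoking \cite[Corollary 3.4]{BFO} — rather than any direct $D$-module computation, which would be delicate because $\tilde q$ is not affine and naive estimates only give exactness up to a shift.
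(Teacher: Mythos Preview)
Your proposal diverges from the paper in a basic organizational way: the paper does \emph{not} prove parts (1)--(3) at all but simply cites \cite{G,L} (Ginzburg and Lusztig), where these are classical facts from the theory of character sheaves and admissible modules. In particular the $t$-exactness of $\Ind_{T\subset B}^G$ and $\Res_{T\subset B}^G$ in (3) is Lusztig's theorem on parabolic induction/restriction of character sheaves, not a consequence of \cite{BFO}. Your plan to extract (3) from \cite[Corollary~3.4]{BFO} is at best a heavy detour and at worst circular: in this paper, Proposition~\ref{CS} is used as input when applying the \cite{BFO} machinery (e.g.\ in the proof of Theorem~\ref{Key}), and \cite{BFO} itself rests on the classical character-sheaf framework. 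Your sketch for (1) and (2) via Lemma~\ref{CHHC} and the Harish-Chandra homomorphism is essentially what underlies the cited references, so that portion is fine in spirit, but you should simply cite \cite{G,L} rather than route through \cite{BFO}.

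For part (4) the paper gives a direct argument, different from yours. The $\rW$-equivariant structure on $\mF=\Res_{T\subset B}^G(\mG)$ is \emph{not} obtained from a Mackey decomposition and a ``$\rW$-invariant projector''; rather, for $x\in N(T)$ with image $w\in\rW$, the $G$-conjugation equivariance of $\mG$ gives $c_x:\mG\is\Ad_x^*\mG$, and applying $\Res_{T\subset B}^G$ together with base change and the independence-of-Borel statement from part (3) yields $c_w:\mF\is w^*\mF$. One then checks (by restricting to $T_{rs}$, where $\mF|_{T_{rs}}\is\mG|_{T_{rs}}$) that $c_w$ depends only on $w$, not on the lift $x$. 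The isomorphism $\Ind_{T\subset B}^G(\mF)^\rW\is\mG$ is then proved by observing that both sides are intermediate extensions from $G_{rs}$: one has $\mG\is j_{!*}(\mG|_{G_{rs}})$ and $\Ind_{T\subset B}^G(\mF)^\rW\is j_{!*}(c_{rs}^*\bar\mF)$ where $\bar\mF$ is the descent of $\mF|_{T_{rs}}$ to $T_{rs}/\rW$, so it suffices to identify the restrictions to $G_{rs}$, which is immediate. Your adjunction-and-reduce-to-Kummer approach might be made to work, but as written the step ``the projector defining $\mG$ is $\rW$-invariant by Proposition~\ref{properties of ind}(2)'' does not make sense: that proposition produces isomorphisms $\Ind(\mF)\is\Ind(w^*\mF)$, not an invariance statement for an idempotent, and you have not explained why an arbitrary summand of $\Ind(\mF_0)$ should be cut out by a $\rW$-equivariant projector. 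The paper's route via $G$-equivariance and $j_{!*}$ avoids this issue entirely.
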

\begin{proof}
Part (1), (2), (3) are proved in \cite{G,L}.
We now prove part (4). We first show that $\mF=\Res^G_{T\subset B}(\mG)$ is 
canonically $\rW$-equivariant.
Let $x\in N(T)$ and $w\in N(T)/T=\rW$ be its image in the Weyl group.
Denote $B_x:=\Ad_xB$.
Consider the following commutative diagram 
\[\xymatrix{T\ar[d]^{w}&B\ar[l]\ar[r]\ar[d]^{\Ad_x}&G\ar[d]^{\Ad_x}\\
T&B_x\ar[l]\ar[r]&G}
\]
where 
$w:T\ra T$ the natural action of $w\in\rW$ on $T$ and 
the horizontal arrows are the natural inclusion and projection maps.
The base change theorems and the fact that the functors 
$\Res^G_{T\subset B}$ and $\Res^G_{T\subset B_x}$ are canonical isomorphic (see part (3)) imply 
\beq\label{base change}
\Res^G_{T\subset B}(\Ad_x^*\mG)\is w^*\Res^G_{T\subset B_x}(\mG)\is w^*
\Res^G_{T\subset B}(\mG).
\eeq
Since $\mG$ is $G$-conjugation equivariant, we have a 
canonical isomorphism 
$c_x:\mG\is\Ad^*_x\mG$.
Applying $\Res^G_{T\subset B}$ to $c_x$  
and using  (\ref{base change}) we get 
\beq\label{c_w'}
\mF=
\Res^G_{T\subset B}(\mG)\is\Res^G_{T\subset B}(\Ad_x^*\mG)\is w^*\Res^G_{T\subset B}(\mG)
=w^*\mF.
\eeq
We claim that the isomorphism above depends only the image $w$
and we denote it by 
\beq\label{c_w}
c_w:\mF\is w^*\mF.
\eeq
To prove the claim it is enough to check that for $x\in T$ the 
restriction of the isomorphism (\ref{c_w'}) to $T_{rs}$ 
is equal to the identity map. By \cite{G}, the restriction 
$\mF|_{T_{rs}}$ is canonically isomorphic 
to $\mG|_{T_{rs}}$ and the map in (\ref{c_w'}) is equal to the 
restriction of $c_x$ to $T_{rs}$. 
Since 
the adjoint action $\Ad_x:G\ra G$ is trivial on $T$, 
the claim follows from 
the fact that 
any $T$-equivariant structure of a local system on $T$
is trivial.
The $G$-conjugation equivariant structure on $\mG$ implies 
$\{c_w\}_{w\in\rW}$ satisfies the required cocycle condition, hence,
the data $(\mF,\{c_w\}_{w\in\rW})$ defines a $\rW$-equivariant structure on 
$\mF=\Res^G_{T\subset B}(\mG)$.
We shall prove
$\Ind_{T\subset B}^G\cF^\rW\is\cG$.
Let $j:G_{rs}\ra G$ the natural inclusion and $c_{rs}:G_{rs}\ra T_{rs}/\rW$ 
the restriction of the Chevalley map $c:G\ra T/\rW$ to $G_{rs}$.
Note that we have 
$\mG\is j_{!*}(\mG|_{G_{rs}})$ and $\Ind^G_{T\subset B}(\mF)^\rW\is
j_{!*}(q_{rs}^*(\bar\mF))$, where 
$\bar\mF\in\on{Loc}(T_{rs}/\rW)$ is the descent of 
$\mF|_{T^{rs}}$ along the map $q_{rs}:T^{rs}\ra T^{rs}/\rW$.
So we reduce to show $\mG|_{G_{rs}}\is c_{rs}^*(\bar\mF)$
and this follows again from the fact that 
$\mG|_{T_{rs}}\is\mF|_{T_{rs}}\in\on{Loc}_{\rW}(T_{rs})$.

\end{proof}

\subsection{Local systems $\mL_\xi$, $\hat\mL_\xi$, $\mE_\xi$, and $\mE_\theta$}\label{central Loc}
Let $\xi\in\breve T$.
It defines 
an one dimensional representation $\chi_\xi$ of $\pi_1(T)$
via $\breve T\is\on{Hom}(\pi_1(T),\bG_m)$, 
and the corresponding local system on $T$, denoted by $\mL_\xi$, is 
called the Kummer local system on $T$ associates to $\xi$.
Let $\rW_\xi$ be the stabilizer of $\xi$ in $\rW$. 
Let $S:=\on{Sym}(\ft)$ and $S_+$ denote the argumentation ideal of $S$.
Define $S_n:=S/S^n_+$, $n\in\bZ_{\geq 0}$ and $S_\xi:=S/S\cdot S_+^{\rW_\xi}$.
Note the natural action of $\rW$ on $S$ induces an action of 
$\rW_\xi$ on $S_\xi$.
We view $S_n$ and $S_\xi$ as $\ft$-module by restricting the natural 
$S=\on{Sym}(\ft)$-actions on $S_n$ and $S_\xi$ to $\ft$. Obviously, the 
$\ft$-actions are nilpotent.  

Consider the following representations 
$\rho_n$
of 
$\pi_1(T)$ in the space
$S_n$, by identifying $\pi_1(T)$ with a lattice in $\ft$ and 
defining 
\beq
\rho_n(t)\cdot s=\exp(t)\cdot s
\eeq where $t\in\ft,\ s\in S_n$.
Similarly, 
we consider 
the representation 
$\rho_\xi^{uni}$ of $\rW_\xi\ltimes\pi_1(T)$ 
in the space $S_\xi$
by setting 
\beq
\rho_\xi^{uni}(w,t)\cdot u=w(\exp(t)\cdot u)
\eeq
where $(w,t)\in\rW_\xi\ltimes\pi_1(T),\ u\in S_\xi$.

Since the character 
$\chi_\xi$ is fixed by $\rW_\xi$, it extends to a 
character of $\rW_\xi\ltimes\pi_1(T)$ which we still denote by 
$\chi_\xi$. We define the following representation of $\rW_\xi\ltimes\pi_1(T)$:
\beq
\rho_{\xi}:=\rho_\xi^{uni}\otimes\chi_\xi.
\eeq
The induction 
$\Ind_{\rW_\xi\ltimes\pi_1(T)}^{\rW\ltimes\pi_1(T)}\rho_\xi$
of $\rho_\xi$ depends only on the image $\theta=[\xi]\in\breve T/\rW_\xi$ and we denote the 
resulting representation by 
\[\rho_\theta:=\Ind_{\rW_\xi\ltimes\pi_1(T)}^{\rW\ltimes\pi_1(T)}\rho_\xi.\]
 
\begin{definition}\label{Def of E_theta}
\begin{enumerate}
\item
We denote by $\mE_\theta$ the 
$\rW$-equivariant 
local systems on $T$  corresponding to the 
representation 
 $\rho_\theta$.

\item
We denote by $\mE_\xi$ and $\mE^{uni}_\xi$ the 
$\rW_\xi$-equivariant 
local systems on $T$ corresponding to the 
representation 
 $\rho_\xi$ and $\rho_\xi^{uni}$. 
\item 
We denote by $\mL^n$ (resp $\mL^n_\xi$) the 
local systems on $T$ corresponding to the 
representation 
$\rho_n$ (resp. $\rho_n\otimes\chi_\xi$). Consider the projective 
system \[\mL^1_\xi\la\mL^2_\xi\la\mL^3_\xi\cdot\cdot\]
and we define the following (pro-)local system on $T$
\beqn\label{L_xi}
\hat\mL_\xi=\underleftarrow{\on{lim}}(\mL_\xi^n).
\eeqn
\end{enumerate}
\end{definition}

\quash{
\begin{lemma}\label{E_theta}
There is a canonical $\rW$-equivariant structure on $\mE_\theta$ such that for each lifting $\xi\in\breve T$ of $\theta$ we have 
a canonical isomorphism of $\rW$-equivariant local systems 
\beq
\mE_\theta\is\on{Ind}_{\rW_\xi}^\rW\mE_\xi.
\eeq
\end{lemma}}

\subsection{Character $D$-module $\mM_\theta$}\label{CS M_theta}
Let $\theta\in\breve T/\rW$ and let $\mE_\theta$ be the 
$\rW$-equivaraint 
local system constructed in \S\ref{central Loc}. Consider $\on{Ind}_{T\subset B}^G(\mE_\theta)$ which is a $G$-equivariant
$D$-module on $G$. 
The $\rW$-equivariant structure on $\mE_\theta$ defines 
a $\rW$-action on $\on{Ind}^G_{T\subset B}(\mE_\theta)$.

\begin{definition}
We define $\mM_\theta$ to be the $\rW$-invariant factor of 
$\mathrm{Ind}^G_{T\subset B}(\mE_\theta)$
\[\mM_\theta:=\mathrm{Ind}^G_{T\subset B}(\mE_\theta)^{\rW}.\]
\end{definition}

By part (3) of Proposition \ref{CS}, each $\cM_\theta$ is a character $D$-module.
Let $\xi\in\breve T$ with image $\theta=[\xi]\in\breve T/\rW$.
Then the isomorphism 
$\mE_\theta\is\Ind_{\rW_\xi}^\rW\mE_\xi$ implies
\beq\label{M_theta}
\mM_\theta
\is(\Ind_{\rW_\xi}^\rW\on{Ind}^G_{T\subset B}(\mE_\xi))^{\rW}
\is\on{Ind}^G_{T\subset B}(\mE_\xi)^{\rW_\xi}.
\eeq

Here is a different way to construct $\mM_\theta$:
Let $G_{rs}$ be the open subset of $G$ consisting of regular 
semi-simple elements. 
The map $G_{rs}\ra T_{rs}/\rW$ induces a 
surjection 
\[\pi_1(G_{rs})\twoheadrightarrow\rW\ltimes\pi_1(T)\] 
and we 
let $\rho_{G_{rs},\theta}$ be the representation of 
$\pi_1(G_{rs})$ given by pulling back $\rho_\theta$ along the surjection above. 
We write $\mE_{G_{rs},\theta}$ for the local system on 
$G_{rs}$ corresponding to $\rho_{G_{rs},\theta}$. Then we have 
\[\mM_{\theta}\is j_{!*}\mE_{G_{rs},\theta}\]
where $j:G_{rs}\ra G$ is the open embedding.

The rest of the section is devoted to the proof of the following 
theorem:

\begin{thm}\label{Key}
We have 
$\on{Av}_U(\mM_\theta)\is\mE_\theta$. In particular, $\on{Av}_U(\mM_\theta)$
is supported on $T=B/U\subset G/U$.
\end{thm}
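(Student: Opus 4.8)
The plan is to compute $\on{Av}_U(\mM_\theta)$ by combining the definition $\mM_\theta = \Ind_{T\subset B}^G(\mE_\theta)^{\rW}$ with a compatibility between $\on{Av}_U$ and $\Ind_{T\subset B}^G$, and then to identify the resulting object on $X = G/U$ with $\mE_\theta$ supported on $T$. The first step is a reduction: by (\ref{M_theta}), it suffices to fix a lift $\xi\in\breve T$ of $\theta$ and prove $\on{Av}_U\bigl(\Ind_{T\subset B}^G(\mE_\xi)\bigr)$, as a $\rW_\xi$-equivariant complex on $X$, has $\rW_\xi$-invariant part supported on $T$ and equal to $\mE_\theta$. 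Actually it is cleaner to prove directly that $\on{Av}_U(\mM_\theta)$ is supported on $T$ by working on the regular semisimple locus first: using the description $\mM_\theta\is j_{!*}\mE_{G_{rs},\theta}$ and the fact from \cite{G} that $\Res_{T\subset B}^G$ recovers $\mM_\theta$ on $T_{rs}$, one controls the generic behaviour, and the content of the theorem is the vanishing off $T$.

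The key computation uses Lemma \ref{M and H}(2), $i^0\on{HC}\is\on{Av}_U$, to transport the question to the Hecke-type category on $X$ coming from $Y/T$. Combined with the identification (\ref{Ind GS}) of $\Ind_{T\subset B}^G$ via the Grothendieck--Springer resolution $\widetilde G$ and proper base change, one expresses $\on{Av}_U\circ\Ind_{T\subset B}^G$ applied to a local system $\mF$ on $T$ in terms of a pushforward along the fibration $\widetilde G \to X$ of the pullback of $\mF$. The fibres of the relevant map over points of $X$ lying away from $T = B/U$ should be affine spaces (or more precisely $U$-orbits / Bruhat-type cells) that carry a nontrivial local system obtained from $\mE_\xi$ twisted by a Kummer character, whose cohomology vanishes; over $T$ the fibre computation reproduces $\mE_\theta$. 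This is precisely the mechanism behind the classical fact that $\Res_{T\subset B}^G\Ind_{T\subset B}^G(\mF)\is\bigoplus_{w\in\rW}w^*\mF$ together with its Mackey/geometric refinement, and here we need the sharper statement that the $\on{Av}_U$ of the induction is literally the skyscraper-type object $\mE_\theta$ on $T$ rather than merely having that as its $\Res$.

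More concretely, the main technical step I would isolate is a vanishing statement: for $g\in G$ with $gU\notin T\subset G/U$, the costalk (or stalk) of $\on{HC}(\mM_\theta)$ — equivalently the complex $R\Gamma$ of a twisted local system along the corresponding orbit in $Y/T$ — is zero. One reduces to a $B$-orbit computation on $X$, where the relevant $T$-monodromic local system restricted to a nontrivial orbit is a product of a Kummer local system $\mL_\xi$ with a unipotent (pro-)local system $\mL^n$ along $\bG_m$-directions coming from root subgroups, and $R\Gamma_{dR}$ of such a local system on $\bG_m$ vanishes unless the monodromy is trivial, i.e. unless the orbit direction is ``orthogonal'' to all roots, which forces $gU\in T$. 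Here the hypothesis encoded in $\mE_\theta$ — built from $\rho_\xi^{uni}\otimes\chi_\xi$ induced up from $\rW_\xi$ — is exactly what makes the relevant monodromies nontrivial in all the needed directions; this is the step I expect to be the main obstacle, since it requires carefully tracking which characters of $\pi_1$ of the root $\bG_m$'s appear and checking none of them is trivial. Once the support is pinned to $T$, the identification $\on{Av}_U(\mM_\theta)|_T\is\mE_\theta$ follows: by Proposition \ref{CS}(4) (or directly from the regular-semisimple description) the $\Res_{T\subset B}^G$ of $\mM_\theta$ is $\mE_\theta$ as a $\rW$-equivariant local system, and since $\on{Av}_U(\mM_\theta)$ is now supported on $T$ it coincides with its restriction there, which agrees with $\Res_{T\subset B}^G(\mM_\theta)=\mE_\theta$ up to the cohomological shift built into the definitions, giving the claimed isomorphism.
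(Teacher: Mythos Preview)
Your proposal takes a route entirely different from the paper's, and the step you yourself flag as ``the main obstacle'' is in fact a genuine gap that your outline does not close.

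The paper does \emph{not} attempt a direct cohomological computation of $\on{Av}_U\circ\Ind_{T\subset B}^G(\mE_\theta)$ via Bruhat cells. Instead it invokes the machinery of \cite{BFO}: the Drinfeld center of the monoidal category $(\mathcal{HC}_{\hat\lambda},\otimes)$ is equivalent (after a translation-functor summation) to $CS_\theta$, and under this equivalence the explicit central object $\cZ_\lambda=\tU_{\hat\lambda}\otimes_S S_\xi^\lambda$ corresponds to some $\cM_\theta'\in CS_\theta$. Proposition~\ref{E=Z} identifies $\cZ_\lambda$ with the global sections of $\mE_\xi$ under the Beilinson--Ginzburg equivalence, which immediately yields $\on{HC}(\cM_\theta')\is\tilde\mE_\theta$ and hence $\on{Av}_U(\cM_\theta')\is\mE_\theta$. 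A separate lemma then matches the induced $\rW$-structure with the one defining $\cM_\theta$, giving $\cM_\theta\is\cM_\theta'$. No fibrewise vanishing along Bruhat strata is ever used.

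Your proposed mechanism --- nontrivial Kummer monodromy along root $\bG_m$-directions forcing $R\Gamma_{\text{dR}}$ to vanish --- breaks down precisely when $\xi$ is not regular. If $\alpha$ is a root with $\alpha^\vee(\xi)=1$ (for instance whenever $\rW_\xi\neq 1$, and in particular for $\theta=[e]$), the monodromy of $\mE_\xi$ along that direction is \emph{unipotent}, and the de~Rham cohomology of a nontrivial unipotent local system on $\bG_m$ is nonzero in both degrees. So the orbit-by-orbit vanishing simply fails for the full induction $\Ind_{T\subset B}^G(\mE_\theta)$; the passage to $\rW$-invariants is essential, but your outline gives no indication of how the $\rW$-action produces cancellation on the non-closed strata. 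Indeed, if a direct argument of the type you sketch were available, it would apply verbatim to $\Psi_{G,\ul}=\Ind_{T\subset B}^G(\Psi_\ul)^\rW$ and prove Theorem~\ref{main thm} without any of the apparatus of this paper --- which is a strong signal that the argument is incomplete as stated.
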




\quash{
\subsection{Monodromy action}\label{central objects}
Since every object $\mF\in D(H_{\xi,\xi})$ 
is monodromic with respect to $T\times T$
with generalized monodromy $(\xi,\xi)$, taking logarithm of the unipotent part 
of monodromy, we get an action 
of $S\otimes S\is\on{Sym}(\ft\oplus\ft)$ on $D(H_{\xi,\xi})$
by endomorphism of identity functor, i.e. we have a map 
\[m_{\xi}:S\otimes S\ra\on{End}(\on{id}_{D(H_{\xi,\xi})}).\]

\begin{lemma}
\begin{enumerate}
\item The map $m_\xi$ factors through 
\[m_{\xi}:S\otimes S\ra S\otimes_{S^{\rW_\xi}} S\ra
\on{End}(\on{id}_{D(H_{\xi,\xi})}).\]
\item 
\quash{The action $m_\xi$ extends to an action on the pro-object $\hat\mL_\xi$ and the coinvaraint of 
$\hat\mL_\xi$ with respect to the left action of the 
ideal $\mO(\breve\ft)_0^{\rW_\xi}$ is isomorphic to $\mE_\xi$.
In other worlds, }We have 
\[\mE_\xi
\is S_\xi\otimes_{S}\hat\mL_\xi
\]
where $S$ acts on $\hat\mL_\xi$ via the log monodromy action of 
$S$ induced by the left action of $T$ on $G/U$.
That is $\mE_\xi$ is isomorphic to the coinvariant of $\hat\mL_\xi$ 
with respect to the left $S^{\rW_\xi}$-action. 
\end{enumerate}
\end{lemma}

\quash{
Define $S_\xi:=\breve\ft\times_{\breve\ft/\rW_\xi}\breve\ft$. Then above Lemma 
shows that the category $D(M_{\xi,\xi})$ is canonically an $\mO(S_\xi)$-linear 
category, i.e. $\ft\times\ft$}

\quash{
\subsection{Construction of $\mM_{\xi}$}
Consider the inductive systems $F_{\lL}$ of ideals of $Z(U)$ of finite codimension supported at $\lL$. 
For each $I\in F_{\lL}$ let $U_I=U/UI$ be the quotient of $U$ by the ideal generated by $I$. 
We define 
\[U_{\hlL}=\prolim_{I\in F_{\lL}} U_I\]
\begin{lemma}
There is a natural $D_G=U\rtimes\mO(G)$-module structure on $U_{\hlL}$.

\end{lemma}
\begin{proof}
\end{proof}

\begin{definition}
We denote by $\mH_{\hlL}$,
the resulting $D_G$-module in the proposition above.  We called $\mH_{\hlL}$ the 
generalized Harish-Chandra systems with generalized central character $\lL$.
\end{definition}
}

}

\subsection{Drinfeld center of Harish-Chandra bimodules}\label{center}
In this section we construct  
certain elements 
in the Drinfeld center of Harish-Chandra bimodules and identify them with
the local systems $\mE_\xi$ under the global section functor.

We first recall facts about Harish-Chandra bimodules
following  
\cite{BG,BFO}.
Let $U=U(\fg)$ be the universal enveloping algebra of $\fg$. 
Let $Z=Z(U)$ be the center of $U$. 
Consider the dot action of $\rW$ on $\breve\ft$, $w\cdot t=w(t+\rho)-\rho$, where 
$\rho$ is the sum of of positive roots. 
We have the Harish-Chandra isomorphism 
$hc:Z\is \mO(\breve\ft)^\rW$ such that for any $\lambda\in\breve\ft$
the center $Z$ acts on the Verma module associated to $\lambda$ via 
$z\ra hc(z)(\lambda)$.
For any $\lambda\in\breve\ft$  we write 
$m_\lambda$ for the corresponding maximal ideal 
and denote by $I_\lambda$ the maximal ideal  of $Z$ corresponding to 
$m_\lambda$ under the Harish-Chandra isomorphism.

Consider the extended universal enveloping algebra $\tU=U\otimes_{Z}\mO(\breve\ft)$.
where $Z$ acts on $\mO(\breve\ft)$ via the Harish-Chandra isomorphism.
We denote by $\tU_\lambda=\tU/\tU m_\lambda$
and  
$\tU_{\hat\lambda}=\underleftarrow{\on{lim}} (\tU/\tU m_\lambda^n)$.

We denote by $\mathcal{HC}_{\hat\lambda}$ the category of 
finitely generated Harish-Chandra bimodules over 
$\tU_{\hat\lambda}$, that is, finitely generated continuous $\tU_{\hat\lambda}$-bimodules such that the 
diagonal action of $\fg$ is locally finite.
We denote by $D(\mathcal{HC}_{\hat\lambda})$ the corresponding derived category.
The tensor product 
$\mM\otimes\mM':=\mM\otimes_{\tU_{\hat\lambda}}\mM'$, 
$\mM,\mM'\in HC_{\hat\lambda}$
(resp. $\mM\otimes^L\mM':=\mM\otimes^L_{\tU_{\hat\lambda}}\mM'$, $\mM,\mM'\in D(\mathcal{HC}_{\hat\lambda})$) defines a monoidal structure on $\mathcal{HC}_{\hat\lambda}$
(resp. $D(\mathcal{HC}_{\hat\lambda})$).

\begin{proposition}[\cite{BG,BFO}]\label{monoidal}
Let $\xi\in\breve T=\breve\ft/\Lambda$ and $\lambda\in\breve\ft$ be a dominate regular lifting of $\xi$. 
The functor 
\[R\Gamma^{\hat\lambda,\widehat{-\lambda-2\rho}}:(D(M_{\xi,\xi^{-1}}),*)\is (D(\mathcal{HC}_{\hat\lambda}),\otimes^L)\] is an 
equivalence of monoidal categories.
\end{proposition}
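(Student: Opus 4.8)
The plan is to recall and adapt the localization argument of Beilinson--Ginzburg \cite{BG} (see also \cite{BFO}), which presents the functor as a ``squared'' and ``relative'' form of the Beilinson--Bernstein theorem. Throughout I work with the basic affine space $X=G/U$, which is quasi-affine, with $\Gamma(X,\mD_X)\is\tU=U\otimes_Z\mO(\breve\ft)$ and $H^{>0}(X,\mD_X)=0$. Thus for a $T$-monodromic $D$-module $\cF$ on $X$ (along the right $T$-action) the complex $R\Gamma(\cF)$ of derived global sections carries a natural $\tU$-module structure, and $R\Gamma^{\hat\mu}(\cF)$ is the summand on which $U(\ft)=\mO(\breve\ft)$ acts with generalized eigenvalue $\mu$, hence is a (continuous) $\tU_{\hat\mu}$-module. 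The essential input is the monodromic/formal version of Beilinson--Bernstein localization: for $\lambda$ regular dominant, $R\Gamma^{\hat\lambda}$ is exact and induces an equivalence between finitely generated $\lambda$-monodromic $D$-modules on $X$ and finitely generated (continuous) $\tU_{\hat\lambda}$-modules (cf. \cite{BB1,BG,BFO}); regularity supplies cohomology vanishing and global generation, dominance fixes the chamber.

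The next step is to ``square'' this equivalence. By definition $D(M_{\xi,\xi^{-1}})$ is the $G$-equivariant monodromic derived category on $Y=X\times X$; since $G$ acts diagonally by left translation with $G\backslash Y\is U\backslash G/U$, an object of $M_{\xi,\xi^{-1}}$ is a monodromic $D$-module on $U\backslash G/U$. Taking derived global sections on $Y=X\times X$, and using that $\Gamma(\mD_Y)\is\tU\otimes\tU$ by K\"unneth, turns these into $\tU$-bimodules; the $(\xi,\xi^{-1})$-monodromy condition, after fixing the regular dominant lift $\lambda$ of $\xi$, amounts to completing the two central copies of $\mO(\breve\ft)$ at $\lambda$ and at $-\lambda-2\rho$ respectively --- the $-2\rho$-shift being produced by Grothendieck--Serre duality on the second flag factor, which is precisely what turns the structure on the second factor into an honest right $\tU_{\hat\lambda}$-module. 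Finally, the diagonal $G$-equivariance translates under $R\Gamma$ into local finiteness of the diagonal $\fg$-action, i.e. the Harish--Chandra condition, and ``finitely generated'' matches ``finitely generated''. This yields $M_{\xi,\xi^{-1}}\is\mathcal{HC}_{\hat\lambda}$ as abelian categories, hence $D(M_{\xi,\xi^{-1}})\is D(\mathcal{HC}_{\hat\lambda})$ since $R\Gamma^{\hat\lambda}$ is exact.

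It remains to upgrade this to a monoidal equivalence. The convolution is $\cF*\cF'=(p_{13})_*(p_{12}^*\cF\otimes p_{23}^*\cF')$ on $X\times X\times X$, which I would realize so that the relevant pushforward takes place over the proper variety $\mB$ in the middle factor (passing to twisted $D$-modules on the flag variety). Applying $R\Gamma$ and invoking a relative form of Lemma \ref{action} --- K\"unneth along the shared middle copy of $X$, so that $R\Gamma$ of $(p_{13})_*(p_{12}^*\cF\otimes p_{23}^*\cF')$ computes the derived tensor product of $R\Gamma(\cF)$ and $R\Gamma(\cF')$ over the operators $\tU$ of that middle factor --- gives $R\Gamma(\cF*\cF')\is R\Gamma(\cF)\otimes^L_{\tU}R\Gamma(\cF')$. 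Passing to the relevant monodromy summands matches the middle monodromies ($-\lambda-2\rho$ from $\cF$ against $\lambda$ from $\cF'$), so the tensor product refines to one over $\tU_{\hat\lambda}$ of the associated Harish--Chandra bimodules, i.e. $R\Gamma^{\hat\lambda,\widehat{-\lambda-2\rho}}(\cF*\cF')\is R\Gamma^{\hat\lambda,\widehat{-\lambda-2\rho}}(\cF)\otimes^L_{\tU_{\hat\lambda}}R\Gamma^{\hat\lambda,\widehat{-\lambda-2\rho}}(\cF')$. The unit of $*$ is the pushforward of the appropriate (pro-)local system from the diagonal $X\hookrightarrow Y$, and $R\Gamma$ carries it to $\tU_{\hat\lambda}$ viewed as a bimodule over itself, the unit of $\otimes^L_{\tU_{\hat\lambda}}$; the associativity and unit constraints then correspond, every comparison isomorphism being an instance of base change or the projection formula, whose coherence is routine. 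Combined with the equivalence of underlying categories above, this proves $R\Gamma^{\hat\lambda,\widehat{-\lambda-2\rho}}$ is an equivalence of monoidal categories.

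The step I expect to be the main obstacle is the bookkeeping around the monodromic completions and the $-2\rho$-shift: one must check that the ``middle'' cohomology along $p_{13}$ vanishes in the completed monodromic setting --- here exactness of $R\Gamma^{\hat\lambda}$ and properness of the flag variety are essential --- and that the derived tensor product genuinely descends from $\otimes^L_{\tU}$ to $\otimes^L_{\tU_{\hat\lambda}}$ once the middle monodromy is pinned down, equivalently that convolution preserves the Harish--Chandra condition with the correct generalized central characters. Regularity and dominance of $\lambda$ are used repeatedly; for singular $\xi$ the statement fails, and one works with the $\mE^{uni}_\xi$-type completions instead.
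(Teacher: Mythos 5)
The paper does not prove this proposition: it is recalled verbatim from \cite{BG,BFO}, and the text supplies only the citation. Your sketch is a faithful reconstruction of the standard argument behind those references --- monodromic Beilinson--Bernstein localization on $X=G/U$ for regular dominant $\lambda$, the identification of diagonal $G$-equivariance with the Harish--Chandra condition on $\tU$-bimodules, and the K\"unneth/base-change computation (the relative form of Lemma \ref{action}) matching convolution along the middle factor with $\otimes^L_{\tU_{\hat\lambda}}$ --- so it is correct in outline and consistent with how the result is used here.
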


Let $\xi\in\breve T$ and $\lambda\in\breve\ft$ a dominant regular lifting of 
$\xi$.
By Proposition \ref{monoidal} 
we have an equivalence of monoidal categories
\beq\label{equ M}
\mathrm M:(D(H_{\xi,\xi}),*)\is (D(M_{\xi,\xi^{-1}}),*)\stackrel{R\Gamma^{\hat\lambda,\widehat{-\lambda-2\rho}}}\is (D(\mathcal{HC}_{\hat\lambda}),\otimes^L).
\eeq
Consider the following 
full 
abelian subcategory 
$H_{\xi,\xi}^t:=\mathrm M^{-1}(\mathcal{HC}_{\hat\lambda})\subset D(H_{\xi,\xi})$.
For any $\mM,\mM'\in H_{\xi,\xi}^t$
we define 
\[\mM*^t\mM':=\mathrm M^{-1}(\mathrm M(\mM)\otimes\mathrm M(\mM'))\in H_{\xi,\xi}^t.\]
One can check that $*^t$ defines a monoidal structure on $M_{\xi,\xi}^t$ and the functor $\mathrm M$ induces an
equivalence of abelian monoidal categories 
\beq\label{mon equ ab setting}
\mathrm M^t:(H_{\xi,\xi}^t,*^t)\is(\mathcal{HC}_{\hat\lambda},\otimes)
\eeq

\quash{Let $\lambda\in\breve\ft$. 
The map $\breve\ft\ra\breve\ft, x\ra x-\lambda$ induces a map of algebras 
$a_\lambda:S\ra S, f\ra (x\ra f(x+\lambda))$ (recall $S=\on{Sym}(\ft)\is\mO(\breve\ft)$).
Consider the following action of $S$ on $S_\xi$, $s\cdot m=a_\lambda(s)m$,
$s\in S,m\in S_\xi$ and denote the resulting $S$-module by $S_\xi^{\lambda}$.}
For each $\lambda\in\breve\ft$ we 
consider the following $S$-module $S_\xi^\lambda$: we have $S_\xi^\lambda=S_\xi$ as vector spaces 
and the $S$-module structure is given by 
$s\cdot m=a_\lambda(s)m$, $s\in S,m\in S_\xi^\lambda$, where 
$a_\lambda:S\ra S, f\ra (x\ra f(x+\lambda))$.
We define 
\beq\label{central element}
\cZ_\lambda:=\tU_{\hat\lambda}\otimes_SS_\xi^\lambda\in\mathcal{HC}_{\hat\lambda}
\eeq
where $S$ acts on $\tU_{\hat\lambda}$ via the 
the map $
p_\lambda:S\is Z(\tU)\ra Z(\tU_{\hat\lambda})$. 

We denote by 
$Z(\mathcal{HC}_{\hat\lambda},\otimes)$
 (resp. $Z(H^t_{\xi,\xi},*^t)$) the Drinfeld center of the monoidal category
$Z(\mathcal{HC}_{\hat\lambda},\otimes)$ (resp. $Z(H^t_{\xi,\xi},*^t)$). Recall an element in $Z(\mathcal{HC}_{\hat\lambda},\otimes)$ consists of an element 
$\mM\in \mathcal{HC}_{\hat\lambda}$ together with family of compatible isomorphisms $b_\mF:\mM\otimes\mF\is\mM\otimes\mF$
for $\mF\in \mathcal{HC}_{\hat\lambda}$. 
\quash{
We have a natural forgetful map 
$For:Z_\xi\ra M_{\xi,\xi}^t$ sending $(\mM,b_\mF)_{\mF\in H_{\xi,\xi}^t}\ra\mM$.}

\begin{prop}\label{E=Z}
Let 
$\lambda\in\breve\ft$ be a dominant regular weight 
and $\xi\in\breve T$ be its image.
\begin{enumerate}
\item To every $\mM\in \mathcal{HC}_{\hat\lambda}$ there is a canonical isomorphism 
\[b_\mM:\mathcal Z_\lambda\otimes\mM\is\mM\otimes\cZ_\lambda\] such that 
the data $(\cZ_\lambda,b_\mM)_{\mM\in \mathcal{HC}_{\hat\lambda}}$ defines 
an element in the Drinfeld center $Z(\mathcal{HC}_{\hat\lambda},\otimes)$.

\item  
We have $\mathrm M(\cE_\xi)\is\cZ_\lambda$.
\end{enumerate}
\end{prop}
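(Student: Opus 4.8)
The plan is to prove the two parts in reverse order: first identify $\mathrm M(\cE_\xi)$ with $\cZ_\lambda$ as an object of $\mathcal{HC}_{\hat\lambda}$, and then upgrade this to an identification of central structures. For part (2), the idea is that the Harish-Chandra bimodule $\cZ_\lambda = \tU_{\hat\lambda}\otimes_S S_\xi^\lambda$ is, by construction, the universal Harish-Chandra bimodule whose left action of the center $S \is Z(\tU_{\hat\lambda})$ and its companion are tied together along the graph of $\rW_\xi$ — so I expect $\cZ_\lambda$ to correspond under $\mathrm M^{-1}$ to the ``big'' monodromic object on $X$ that one gets from the pro-local system $\hat\mL_\xi$ by taking coinvariants for the left $S^{\rW_\xi}$-action, which is exactly $\mE_\xi$ by the computation in \S\ref{central Loc} (the representation $\rho_\xi = \rho_\xi^{uni}\otimes\chi_\xi$ on $S_\xi$). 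Concretely, I would compute $R\Gamma^{\hat\lambda,\widehat{-\lambda-2\rho}}$ applied to $i^0$ of $\mE_\xi$ (or equivalently to the corresponding object of $D(M_{\xi,\xi^{-1}})$) using Lemma \ref{action} and the localization/global-sections dictionary of Beilinson-Bernstein: the global sections of the $D$-module obtained by pushing $\mE_\xi$ forward along the relevant maps are a $\tU_{\hat\lambda}$-bimodule, and the monodromy data that defines $\mE_\xi$ (the $S_\xi$-module structure) translates directly into the $S^\lambda_\xi$-coefficient appearing in \eqref{central element}. The degree shift $\widehat{-\lambda-2\rho}$ on the second factor matches the Verma-module normalization used in Proposition \ref{monoidal}.

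For part (1), the central (half-braiding) structure $b_\mM$ on $\cZ_\lambda$ should come from the fact that $S_\xi^\lambda$ is an $(S,S)$-bimodule descended from the graph subscheme $\breve\ft\times_{\breve\ft/\rW_\xi}\breve\ft$, so tensoring with $\cZ_\lambda$ on either side of a Harish-Chandra bimodule $\mM$ amounts to the same operation of ``twisting the generalized central character along $\rW_\xi$,'' and the two ways of doing it (left versus right) are canonically identified because $\mM$ is a bimodule over $\tU_{\hat\lambda}$ with both central actions factoring through the same $\mO(\breve\ft)^\rW$. I would make this precise by exhibiting $\cZ_\lambda\otimes_{\tU_{\hat\lambda}}\mM$ and $\mM\otimes_{\tU_{\hat\lambda}}\cZ_\lambda$ both as $\mM\otimes_{S,\,a_\lambda} S_\xi$ with the two $S$-actions on $\mM$ (via the left, resp. right, copy of $Z(\tU_{\hat\lambda})$) and noting these agree after restriction to $S^{\rW_\xi}$ — which is exactly the support condition that makes $S_\xi$ a module. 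The hexagon/compatibility axioms for $(\cZ_\lambda, b_\mM)$ then reduce to associativity of these tensor identifications, which is formal.

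The main obstacle I anticipate is part (2): carefully matching normalizations across the chain of equivalences in \eqref{equ M}, i.e. checking that the monodromic $D$-module on $X = G/U$ attached to $\mE_\xi$ under the equivalence $i^0$ of Lemma \ref{M and H}, after applying $R\Gamma^{\hat\lambda,\widehat{-\lambda-2\rho}}$, really produces $\tU_{\hat\lambda}\otimes_S S_\xi^\lambda$ and not some Serre-dual or Weyl-twisted variant. This is where the $\sigma$-positivity/dominant-regular hypotheses and the precise bookkeeping of $\rho$-shifts enter, and it will likely require unwinding the construction of $\mE_\xi$ in terms of $\hat\mL_\xi$ (Definition \ref{Def of E_theta}(3)) together with the description $\mE_\xi \is S_\xi\otimes_S \hat\mL_\xi$ and the fact that $R\Gamma$ of $\hat\mL_\xi$ is the ``big projective''-type object $\tU_{\hat\lambda}$ itself. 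Once the object-level identification is pinned down, transporting the Drinfeld-center structure through the monoidal equivalence $\mathrm M^t$ is automatic, so part (1) combined with the transport gives the $\rW$-equivariant refinement needed downstream for Theorem \ref{Key}.
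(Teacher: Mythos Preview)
Your plan matches the paper's approach closely, but there is one genuine gap in part~(1). You write that the two $S$-actions on $\mM$ (via the left and right copies of $Z(\tU_{\hat\lambda})\is S$) ``agree after restriction to $S^{\rW_\xi}$,'' and you justify this by saying that ``both central actions factor through the same $\mO(\breve\ft)^\rW$.'' But that reasoning only yields agreement on $S^{\rW}$, not on the larger subring $S^{\rW_\xi}$, and since $S_\xi = S/S\cdot S_+^{\rW_\xi}$ is killed by $S_+^{\rW_\xi}$ rather than by $S_+^{\rW}$, agreement on $S^{\rW}$ alone is not enough to build the isomorphism $b_\mM$. The sharper statement---that for $\mM\in\mathcal{HC}_{\hat\lambda}$ the $S\otimes S$-action (shifted by $a_\lambda\otimes a_{-\lambda-2\rho}$) factors through $S\otimes_{S^{\rW_\xi}}S$---is a nontrivial theorem of Soergel \cite{S}, and the paper invokes it at exactly this point. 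This is not formal from the bimodule structure; it is the Endomorphismensatz, and you should cite it rather than treat it as a support condition.

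For part~(2) your strategy is correct, and the paper's computation is only cosmetically different from yours. Rather than passing through $\hat\mL_\xi$ and the identification $\mE_\xi\is S_\xi\otimes_S\hat\mL_\xi$, the paper writes the object $\tilde\mE_\xi\in M_\xi$ corresponding to $\mE_\xi$ directly as $a_*(\mE_\xi\boxtimes\Delta_*\mO_{G/B})$, where $a$ is the $T$-action on $Y/T$ and $\Delta:G/B\to Y/T$ is the diagonal; then Lemma~\ref{action} and the identification $R\Gamma(\Delta_*\mO_{G/B}\otimes p_2^*\omega_{G/B})\is\tU$ give $\mathrm M(\mE_\xi)\is\tU\otimes_S\Gamma^{\hat\lambda}(\mE_\xi)\is\tU\otimes_S S_\xi^\lambda=\cZ_\lambda$. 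Your route via the pro-object $\hat\mL_\xi$ should also work and is the same computation in disguise, but the paper's version avoids the pro-object bookkeeping and makes the $\rho$-shift transparent.
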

\begin{proof}
Proof of (1).
Consider the map 
$m_{\lambda}:S\otimes S\stackrel{a_\lambda\otimes a_{-\lambda-2\rho}}\ra S\otimes S
\stackrel{p_\lambda\otimes p_{-\lambda-2\rho}}
\ra Z(\tU_{\hat\lambda}\otimes\tU_{\widehat{-\lambda-2\rho}})$.
To every $\mM\in \mathcal{HC}_{\hat\lambda}$, the map above 
defines an action of $S\otimes S$ on $\mM$ and the result in \cite{S} implies 
that this action factors through $S\otimes S\ra S\otimes_{S^{\rW_\xi}}S$.
Therefore, for every $\mM\in \mathcal{HC}_{\hat\lambda}$, we have a 
canonical
isomorphism $b_\mM:\cZ_\lambda\otimes\mM
\is S/S\cdot S^{\rW_\xi}_+\otimes_S\mM\is\mM\otimes_SS/S\cdot S^{\rW_\xi}_+\is\mM\otimes\cZ_\lambda$. One can check 
that those isomorphisms satisfy the required compatibility conditions and 
the data $(\cZ_\lambda,b_\mM)$
defines an element in $Z(\mathcal{HC}_{\hat\lambda},\otimes)$.

Proof of (2).
Let $\tilde\mE_\xi\in M_{\xi}$
be the image of $\mE_\xi$ under the equivalence 
$(i^{0})^{-1}:H_{\xi}\is M_{\xi}$ in Lemma \ref{M and H}.
Then by definition we have $\mathrm M(\mE_\xi)\is R\Gamma^{\hat\lambda,\widehat{-\lambda-2\rho}}(\pi^*\tilde\mE_\xi)$, 
where $\pi:Y\ra Y/T$.
Consider the map \[a:T\times (G/U\times G/U)/T\ra (G/U\times G/U)/T,\ (t,gU,g'U)\ra 
(gt^{-1}U,g'U).\] 
Then it follows from the definition of $(i^0)^{-1}$ that we have $\tilde\mE_\xi=a_*(\mE_\theta\boxtimes\Delta_*\mO_{G/B})$, here $\Delta:G/B\ra (G/U\times G/U)/T$ is the embedding $gB\ra (gU,gU)\on{mod}\ T$.
Note that $R\Gamma(\Delta_*\mO_{G/B}\otimes p_2^*\omega_{G/B})\is\tU$
($p_2$ is the right projection map $(G/U\times G/U)/T\ra G/B$) hence by Lemma \ref{action} we get
\[
R\Gamma^{\hat\lambda,\widehat{-\lambda-2\rho}}(\pi^*\tilde\mE_\xi)\is
R\Gamma^{\hat\lambda}(\tilde\mE_\xi\otimes p_2^*\omega_{G/B})=R\Gamma^{\hat\lambda}(a_*(\mE_\xi\boxtimes(\Delta_*\mO_{G/B}\otimes p_2^*\omega_{G/B}))\is
\tU\otimes_{\on{Sym}(\ft)} \Gamma^{\hat\lambda}(\mE_\xi).\]
Since $\Gamma^{\hat\lambda}(\mE_\xi)\is S_\xi^\lambda$,
part (2) follows.

\end{proof}

\begin{corollary}\label{central structure}
\begin{enumerate}
\item We have $\mE_\xi\in H^t_{\xi,\xi}$.
\item  
To every $\mM\in H^t_{\xi,\xi}$ there is a canonical isomorphism 
\[b_\mM:\mE_\xi*^t\mM\is\mM*^t\mE_\xi\] such that 
the data $(\mE_\xi,b_\mM)_{\mM\in H^t_{\xi,\xi}}$ defines 
an element in the Drinfeld center $Z(H^t_{\xi,\xi},*^t)$.
\end{enumerate}
\end{corollary}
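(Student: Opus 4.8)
The plan is to deduce both parts directly from Proposition \ref{E=Z} by transporting the statements there across the monoidal equivalence $\mathrm M^t$ of (\ref{mon equ ab setting}). For part (1), recall that by definition $H^t_{\xi,\xi}=\mathrm M^{-1}(\mathcal{HC}_{\hat\lambda})\subset D(H_{\xi,\xi})$, so it suffices to check that $\mathrm M(\mE_\xi)$ lies in the abelian subcategory $\mathcal{HC}_{\hat\lambda}$, i.e.\ is concentrated in a single cohomological degree. But part (2) of Proposition \ref{E=Z} computes $\mathrm M(\mE_\xi)\is\cZ_\lambda=\tU_{\hat\lambda}\otimes_S S_\xi^\lambda$, which is manifestly an object of $\mathcal{HC}_{\hat\lambda}$ (it is a finitely generated $\tU_{\hat\lambda}$-bimodule with locally finite diagonal $\fg$-action, being a quotient of $\tU_{\hat\lambda}$). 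This gives $\mE_\xi\in H^t_{\xi,\xi}$.

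For part (2), I would simply pull back the central structure on $\cZ_\lambda$ constructed in part (1) of Proposition \ref{E=Z} along the equivalence of abelian monoidal categories $\mathrm M^t:(H^t_{\xi,\xi},*^t)\is(\mathcal{HC}_{\hat\lambda},\otimes)$. A monoidal equivalence induces an equivalence of Drinfeld centers $Z(H^t_{\xi,\xi},*^t)\is Z(\mathcal{HC}_{\hat\lambda},\otimes)$; more concretely, given $\mM\in H^t_{\xi,\xi}$ one has the half-braiding
\[
b_\mM:\ \mE_\xi*^t\mM\ \xrightarrow{\ \mathrm M^t\ }\ \mathrm M^t(\mE_\xi)\otimes\mathrm M^t(\mM)\is\cZ_\lambda\otimes\mathrm M^t(\mM)\ \xrightarrow{\ b_{\mathrm M^t(\mM)}\ }\ \mathrm M^t(\mM)\otimes\cZ_\lambda\is\mathrm M^t(\mM*^t\mE_\xi)\ \xrightarrow{\ (\mathrm M^t)^{-1}\ }\ \mM*^t\mE_\xi,
\]
using $\mathrm M^t(\mE_\xi)\is\cZ_\lambda$ from Proposition \ref{E=Z}(2) and the fact that $\mathrm M^t$ is monoidal. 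The compatibility conditions for $(\mE_\xi,b_\mM)_{\mM}$ to define an object of the Drinfeld center — naturality in $\mM$ and the hexagon/compatibility with the associativity and with tensor products $\mM*^t\mM'$ — follow formally because $\mathrm M^t$ is an equivalence of monoidal categories and the corresponding conditions hold for $(\cZ_\lambda,b_\bullet)$ by Proposition \ref{E=Z}(1).

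There is essentially no obstacle here: the corollary is a purely formal consequence of Proposition \ref{E=Z} together with the general fact that a monoidal equivalence identifies Drinfeld centers. The only point requiring a word of care is that $\mathrm M^t$ as defined in (\ref{mon equ ab setting}) is an equivalence of \emph{abelian} monoidal categories (not merely of triangulated ones), so the half-braidings it transports stay within $H^t_{\xi,\xi}$ and no derived subtleties intervene; this is exactly why part (1) — verifying $\mE_\xi\in H^t_{\xi,\xi}$ — is needed as a preliminary step before part (2) can even be stated.
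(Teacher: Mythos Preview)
Your proposal is correct and follows exactly the approach of the paper, which simply states that the corollary ``follows immediately from the above proposition and (\ref{mon equ ab setting}).'' You have spelled out the details of transporting the central structure across the monoidal equivalence $\mathrm M^t$, but this is precisely what the paper intends.
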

\begin{proof}
This follows immediately from above proposition and (\ref{mon equ ab setting}).
\end{proof}

\quash{
1)
The functor $\on{Av}_U^\xi$ maps $CS_\theta$ to $M_{\xi,\xi}^t$. 
We denote the resulting functor by $\on{Av}$
\\
2) 
There is a canonical equivalence 
\[{\on{av}}_U^\xi:CS_{\theta}\is Z(H^t_{\xi,\xi},*^t)\]
such that the composition 
$CS_{\theta}\is Z(H^t_{\xi,\xi},*^t)\ra M_{\xi,\xi}^t$
is isomorphic to $\on{Av}_U^\xi$.
}

\subsection{Proof of Theorem \ref{Key}}
Recall the notion of translation functor $\theta_\lambda^\mu:\mathcal{HC}_{\hat\lambda}\ra\mathcal{HC}_{\hat\mu}$
where $\mu\in\lambda+\Lambda$. 
In \cite{BFO}, they proved the following:

(1) There is a lifting  $\theta_\lambda^\mu:Z(\mathcal{HC}_{\hat\lambda},\otimes)\ra Z(\mathcal{HC}_{\hat\mu},\otimes)$ such that 
the functor
$\mathbf F:Z(\mathcal{HC}_{\hat\lambda},\otimes)\ra Z(\mathcal{HC}_{\hat\xi},\otimes),\ 
L\ra\bigoplus_{\mu\in(\lambda+\Lambda)/\rW_\xi}\theta_\lambda^\mu(L)$
define an equivalence of braided monoidal categories.

(2) For any  $\mM\in\cM_G(G)$ the global section  $\Gamma(\mM)$
is naturally a 
Harish-Chandra bimodule, with a canonical central structure and 
the resulting functor  $\Gamma:\cM_G(G)\ra Z(\mathcal{HC},\otimes)$
is 
an equivalence of abelian categories. Moreover, the equivalence above 
restricts to an equivalance $CS_\theta\is Z(\mathcal{HC}_{\hat\xi},\otimes)$
and the composed equivalence 
\[CS_\theta\is Z(\mathcal{HC}_{\hat\xi},\otimes)
\stackrel{\mathbf F^{-1}}\is Z(\mathcal{HC}_{\hat\lambda},\otimes)\]
is isomorphic to $R\Gamma^{\hat\lambda,\widehat{-\lambda-2\rho}}\circ\pi^0\on{HC}$.
Here $\pi:Y\ra Y/T$ is the projection map.

Let $\cZ_\lambda\in Z(\mathcal{HC}_{\hat\lambda},\otimes)$, $\tilde\mE_\xi\in M_{\xi}$ 
be as 
in Proposition \ref{E=Z}.  
Define 
$\tilde\mE_\theta\is\bigoplus_{\xi\in\breve T,\xi\ra\theta}\tilde\mE_\xi\in\cM_G(Y)$.
By the discussion above there exists a character $D$-module $\cM_\theta'\in CS_\theta$ such that 
\[R\Gamma^{\hat\lambda,\widehat{-\lambda-2\rho}}\circ\pi^0\on{HC}(\cM_\theta')=\cZ_\lambda.\]
Hence by Proposition \ref{E=Z}, we have   
\[R\Gamma^{\hat\lambda,\widehat{-\lambda-2\rho}}\circ\pi^0\mathrm{HC}(\mM_\theta')\is
R\Gamma^{\hat\lambda,\widehat{-\lambda-2\rho}}(\pi^0\tilde\mE_\xi)\]
for any regular dominant $\lambda\in\breve\ft$
mapping to $\xi$.
Since $\pi^0\on{HC}:D(CS_\theta)\ra\bigoplus_{\xi\in\breve T,\xi\ra\theta} D(M_{\xi,\xi^{-1}})$
and 
$R\Gamma^{\hat\lambda,\widehat{-\lambda-2\rho}}:
D(M_{\xi,\xi^{-1}})\is \mathcal{HC}_{\hat\lambda}$ is an equivalence of category for regular dominant $\lambda$,
this implies $\mathrm{HC}(\mM_\theta')\is\tilde\mE_\theta$.
Applying the equivalence $i^0:D(M_\xi)\is D(H_\xi)$ on both sides and using 
Lemma \ref{M and H}, we get 
\beq\label{vanishing}
\on{Av}_U(\mM_\theta')\is i^0(\mathrm{HC}(\mM_\theta'))\is i^0\tilde\mE_\theta\is\mE_\theta.
\eeq
The isomorphism above implies 
$\mE_\theta\is\on{Av}_U(\mM_\theta')\is\Res_{T\subset B}^G(\mM_\theta')$, hence 
by part $(4)$ of Proposition \ref{CS}, there 
is canonical $\rW$-equivaraint structure on $\mE_\theta$
such that $\mM_\theta'\is\Ind_{T\subset B}^G(\mE_\theta)^\rW$
.  In the lemma below 
we will show that this $\rW$-equivaraint structure on $\mE_\theta$
coincides with the one in Definition \ref{Def of E_theta}, therefore we have 
$\mM_\theta\is\mM_\theta'\is\Ind_{T\subset B}^G(\mE_\theta)^\rW$
and the theorem follows from (\ref{vanishing}).

\begin{lemma}
The $\rW$-equivaraint structure on 
$\mE_\theta\is\Res_{T\subset B}^G(\mM_\theta')$
constructed 
in Proposition \ref{CS}
coincides with the one in Definition \ref{Def of E_theta}. 
\end{lemma}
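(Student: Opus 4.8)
**

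The plan is to compare the two $\rW$-equivariant structures on $\mE_\theta$ by restricting everything to the regular semisimple locus $T_{rs}$, where both the $D$-module $\mM_\theta'$ and its averaging become transparent, and then invoke rigidity of local systems to conclude that an isomorphism of $\rW$-equivariant structures on all of $T$ is determined by its restriction to $T_{rs}$. Concretely, a $\rW$-equivariant structure on a local system $\mF$ on $T$ is the data of isomorphisms $c_w\colon\mF\is w^*\mF$ satisfying the cocycle condition; since $T_{rs}\subset T$ is open dense and $\mF$ is $\mO$-coherent with flat connection, the restriction map $\Hom(\mF,w^*\mF)\to\Hom(\mF|_{T_{rs}},w^*\mF|_{T_{rs}})$ is injective. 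So it suffices to check that the two families $\{c_w\}$ agree after restriction to $T_{rs}$.

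First I would unwind, on $T_{rs}$, the structure coming from Proposition \ref{CS}(4): there the $\rW$-action on $\mF=\Res^G_{T\subset B}(\mM_\theta')$ was built from the conjugation-equivariance isomorphisms $c_x\colon\mM_\theta'\is\Ad_x^*\mM_\theta'$ for $x\in N(T)$, transported through the base-change identity (\ref{base change}) and using that $\Res|_{T_{rs}}$ is just $\mM_\theta'|_{G_{rs}}$ pulled back along $G_{rs}\to T_{rs}/\rW$. Thus on $T_{rs}$ the structure $c_w$ is literally the monodromy-twisting action of $\rW$ on the local system $\mE_{G_{rs},\theta}|_{T_{rs}}$ attached to the representation $\rho_\theta$ of $\rW\ltimes\pi_1(T)$. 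Next I would unwind the structure of Definition \ref{Def of E_theta}: by construction $\mE_\theta$ is \emph{defined} as the $\rW$-equivariant local system attached to $\rho_\theta$, and its restriction to $T_{rs}$ carries exactly the $\rW$-action coming from the $\rW$-action on $\rho_\theta=\Ind_{\rW_\xi\ltimes\pi_1(T)}^{\rW\ltimes\pi_1(T)}\rho_\xi$. Since $\pi_1(T_{rs})\to\pi_1(T)$ is surjective (indeed $T_{rs}$ is the complement of finitely many subtori in $T$, and removing codimension-$\geq 1$ subvarieties does not kill $\pi_1$ of a torus — one uses that $T_{rs}$ still surjects onto $\rW\ltimes\pi_1(T)$ via $T_{rs}\to T_{rs}/\rW$), restricting to $T_{rs}$ loses no information, and both structures are manifestly the natural $\rW$-action on the induced representation $\rho_\theta$.

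The remaining point — and the main obstacle — is to verify that the isomorphism $\mM_\theta'\is\Ind^G_{T\subset B}(\mE_\theta)^\rW$ produced by the $\on{HC}$/Drinfeld-center argument is \emph{compatible} with these identifications on $T_{rs}$, i.e. that the abstract equivalence $CS_\theta\is Z(\mathcal{HC}_{\hat\xi},\otimes)$ of \cite{BFO}, composed with $\mathbf F^{-1}$ and Proposition \ref{E=Z}, carries the central structure on $\cZ_\lambda$ (which came from the $S\otimes_{S^{\rW_\xi}}S$-action, i.e. from monodromy) to the monodromy-twisting $\rW$-action on $\mE_\xi$, and hence to the $\rW_\xi$-structure matching Definition \ref{Def of E_theta} after inducing up. The cleanest route is to observe that all constructions in sight are compatible with restriction to the regular semisimple locus: on $T_{rs}$, $\Ind^G_{T\subset B}$ becomes pullback along $G_{rs}\to T_{rs}/\rW$ followed by $j_{!*}$, $\on{Av}_U$ becomes $\Res^G_{T\subset B}$ (as already used in (\ref{vanishing})), and the central/monodromy structure on $\mE_\xi$ is by construction the $\rho_\xi$-twist. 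So I would reduce the whole lemma to the statement that the two descriptions of the local system $\mE_\xi|_{T_{rs}}$ with its $\rW_\xi$-structure — one via Proposition \ref{E=Z}(2) and the Soergel-theoretic $S\otimes_{S^{\rW_\xi}}S$-action, the other via the explicit representation $\rho_\xi=\rho_\xi^{uni}\otimes\chi_\xi$ of $\rW_\xi\ltimes\pi_1(T)$ — coincide; and this is exactly the content of Proposition \ref{E=Z}(2) together with the identification $\Gamma^{\hat\lambda}(\mE_\xi)\is S_\xi^\lambda$ already established there, now tracked equivariantly. I expect the only genuine work is bookkeeping of the $\rW_\xi$-action through the chain of equivalences in (\ref{equ M}) and through the translation-functor equivalence $\mathbf F$, using that translation functors commute with the monodromy action and that $\mathbf F$ is an equivalence of \emph{braided} monoidal categories (so it transports Drinfeld centers and their underlying objects' symmetry data faithfully).
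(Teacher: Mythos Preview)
Your reduction to $T_{rs}$ via rigidity is valid in principle, but the step you call ``bookkeeping'' in your third paragraph is in fact the entire content of the lemma, and you have not carried it out. The isomorphism $\on{Av}_U(\mM'_\theta)\is\mE_\theta$ of (\ref{vanishing}) was obtained abstractly through the BFO equivalence; to show it intertwines the $\rW$-structure of Proposition~\ref{CS}(4) with that of Definition~\ref{Def of E_theta} you must actually compute how the $G$-conjugation action on $\mM'_\theta$ descends to $\mE_\theta$. Your second paragraph asserts that on $T_{rs}$ the structure $c_w$ is ``literally the monodromy-twisting action of $\rW$ on $\mE_{G_{rs},\theta}|_{T_{rs}}$ attached to $\rho_\theta$,'' but this is precisely what is in question: $\mM'_\theta$ was not \emph{defined} via $\rho_\theta$, it was defined as the preimage of $\cZ_\lambda$ under an abstract equivalence. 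Restricting to $T_{rs}$ yields a statement no easier than the original, and tracking equivariance through the chain (\ref{equ M}) and the translation functors $\mathbf F$ is not routine --- you have named the obstacle correctly but not surmounted it.

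The paper bypasses $T_{rs}$ and works instead at the fiber over $e\in T$, where an explicit computation is available. It applies $R\Gamma^{\hat 0}\circ\on{Av}_U$ and uses Lemma~\ref{action} to get $R\Gamma^{\hat 0}(\on{Av}_U(\mM'_\theta))\is (U(\fg)/U(\fg)\fn)\otimes_S S/S^\rW_+$. The point is that $c_x$ arises from the adjoint $G$-action on the concrete bimodule $\cZ_0=\tU_{\hat 0}\otimes_S(S/S^\rW_+)$, so after taking sections the map $R\Gamma^{\hat 0}(\on{Av}_U(c_x))$ is simply $v\otimes s\mapsto \Ad_x(v)\otimes s$. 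Projecting to the factor $S/S^\rW_+\is R\Gamma^{\hat 0}(\mE_\theta)\is\mE_\theta|_e$ gives $s\mapsto w(s)$, which is exactly the $\rW$-action of Definition~\ref{Def of E_theta}. Since a $\rW$-equivariant structure on a local system is determined by the representation on the fiber at a fixed point, this finishes the argument. The single line $v\otimes s\mapsto\Ad_x(v)\otimes s$ replaces your entire third paragraph; what makes it work is having a concrete Harish-Chandra bimodule model for $\cZ_0$ on which the adjoint action is visible, rather than chasing equivariance through monoidal equivalences.
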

\begin{proof}
We give a proof in the case when $\theta=[e]\in\breve T/\rW$. The proof 
for the general cases are similar. 
By construction we have $\cZ_0=\tU_{\hat 0}\otimes_S(S/S^\rW_+)$,
$\mM'_\theta\is\bigoplus_{\mu\in \Lambda/\rW}\theta_{0}^\mu(\cZ_0)$, 
and $\mE_\theta=\mE_e=\mE_e^{unip}$.
Let $x\in N(T)$ and $w\in\rW$ its image in the Weyl group. 
Let \[c_x:\mM'_\theta\is\Ad_x^*\mM'_\theta\] 
be the isomorphism coming from 
the $G$-conjugation equivariant structure on $\mM'_\theta$
and 
\[c_w:\mE_\theta\is\Res_{T\subset B}^G\mM_\theta'\is\Res_{T\subset B}^G\Ad_x^*\mM_\theta'\is w^*\mE_\theta\]
the induced map in (\ref{c_w}).
By Lemma \ref{action} we have 
\beq\label{section}
R\Gamma^{\hat 0}(\on{Av}_U(\mM_\theta'))\is (U(\fg)/U(\fg)\fn)\otimes_SS/S^\rW_+,\ \ R\Gamma^{\hat 0}(\on{Av}_U(\Ad_x^*\mM_\theta'))\is (U(\fg)/U(\fg)\fn_x)\otimes_SS/S^\rW_+.
\eeq
Here in the right isomorphism $\fn_x=\Ad_x\fn$ and the $S$-module structure on $S/S^\rW_+$
is given by $s\cdot m=\Ad_x(s)m$, $s\in S, m\in S/S^\rW_+$.
Since the map $c_x$ comes from the 
adjoint $G$-action on $\cZ_0$, under the isomorphisms in (\ref{section}),
the map \[
R\Gamma^{\hat 0}(\on{Av}_U(c_x)):
R\Gamma^{\hat 0}(\on{Av}_U(\mM_\theta'))\is R\Gamma^{\hat 0}(\on{Av}_U(\Ad_x^*\mM_\theta'))\]
becomes  
\[(U(\fg)/U(\fg)\fn)\otimes_SS/S^\rW_+\ra(U(\fg)/U(\fg)\fn_x)\otimes_SS/S^\rW_+,\ v\otimes s\ra \Ad_x(v)\otimes s.\]
This implies the map 
\[
a_w:S/S^\rW_+\is R\Gamma^{\hat 0}(\mE_\theta)\stackrel{R\Gamma^{\hat 0}(c_w)}\ra R\Gamma^{\hat 0}(w^*\mE_\theta)\is S/S^\rW_+
\]
induced by $c_w$
is equal to the natural $\rW$-action map $s\ra \Ad_xs=w(s)$. 
Notice that the assignment $w\ra a_w$ is the representation of $\rW$
on $S/S^\rW_+\is\mE_\theta|_e$, coming from the 
$\rW$-equivariant structure $\{c_w\}_{w\in\rW}$ on $\mE_\theta$.
Since the $\rW$-equivariant structure on $\mE_\theta$ constructed in 
Definition \ref{Def of E_theta} uses 
the same representation of $\rW$ on $S/S^\rW_+$,
the lemma follows.

\end{proof}

\section{Gamma $D$-modules}
In this section we recall the definition of gamma $D$-modules on a reductive group due to D.Kazhdan and A.Braverman. 
We compute the 
convolution of 
gamma $D$-modules with the $D$-modules 
$\cE_\xi$ and $\cM_\theta$
introduced in \S\ref{CS}
(see Proposition \ref{key prop} and 
Theorem \ref{conv with M_theta}). Those computations play important roles 
in \S\ref{KB conj} for the proof of 
the acyclicity of Gamma $D$-modules 
and 
in \S\ref{NL FT} for the proof of exactness properties of non-linear Fourier transforms.

\subsection{Gamma $D$-modules on $T$}\label{Gamma on T}
For any $c\in\bC^\times$ we 
consider the corresponding exponential $D$-module 
$\bC[x]e^{cx}:=\{fe^{cx}|f\in\bC[x]\}$
on $\bG_a=\Spec\bC[x]$ with generator $e^{cx}$ 
and $\partial_x(fe^{cx})=(f'+c)e^{cx},\ f\in\bC[x]$.
For each nontrivial cocharacter $\lambda:\bG_m\ra T$ we define 
$\Psi(\lambda,c):=\lambda_*(j^*\bC[x]e^{cx})$ where $j:\bG_m\ra\bG_a$ is the natural inclusion.
Note that since $\lambda$ is finite we have \[\Psi(\lambda,c)=\lambda_*(j^*\bC[x]e^{cx})\is\lambda_!(j^*\bC[x]e^{cx}).\]
Recall the convolution product $
*$ (resp. $\star$) on $D(T)_{hol}$
\[\mF*\mF'=m_*(\mF\boxtimes\mF'),\  (\text{resp.}\  
\mF\star\mF'=m_!(\mF\boxtimes\mF'))\] 
Here $m:T\times T\ra T, (x,y)\ra xy$ is the multiplication map. 
For every collection of possibly repeated nontrivial cocharacter $\underline\lambda=(\lambda_1,...,\lambda_r)$
we define 
\[\Psi_{\underline\lambda,c}:=\Psi(\lambda_1,c)*\cdot\cdot\cdot*\Psi(\lambda_r,c)\]
\[\Psi^!_{\underline\lambda,c}:=\Psi(\lambda_1,c)\star\cdot\cdot\cdot\star\Psi(\lambda_r,c).\]
Following \cite{BK1}, we call
$\Psi_{\ul,c}$ and $\Psi_{\ul,c}^!$ gamma $D$-modules on $T$.

Let $\pr_{\underline\lambda}:\bG_m^r\ra T, (x_1,...,x_r)\ra\prod_{i=1}^r\lambda_i(x_i)$
and $\tr:\bG_m^r\ra\bG_a, (x_1,...,x_r)\ra\sum x_i$.
Then using base changes one can show that 
$\Psi_{\underline\lambda,c}=\pr_{\underline\lambda,*}\tr^*(\bC[x]e^{cx})$
and $\Psi^!_{\underline\lambda,c}=\pr_{\underline\lambda,!}\tr^*(\bC[x]e^{cx})$.
\quash{
Following \cite{K}, we will call $\Psi_{\underline\lambda}$ and $\Psi^!_{\underline\lambda}$ hypergeometric 
$D$-modules. }

\begin{definition}\label{sigma positive}
Let $\sigma:T\ra\bG_m$ be a character. A co-character $\lambda$ is called $\sigma$-positive 
if $\sigma\circ\lambda:\bG_m\ra\bG_m$ has the form $t\ra t^n$ for some positive integer $n$.
\end{definition}
We have the following properties of gamma $D$-modules:
\begin{proposition}\label{Psi and L}
Let $\ul=\{\lambda_1,...,\lambda_r\}$ be a collection of  $\sigma$-positive
co-characters.
\\
1) Then the natural map $\Psi^!_{\underline\lambda,c}\ra
\Psi_{\underline\lambda,c}$ is an isomorphism. Moreover, $\Psi_{\underline\lambda,c}$ is a local system on 
the image $T_{\underline\lambda}:={\on{Im}(\pr_{\underline\lambda}})\subset T$.
\\
2)
We have $\mathbb D(\Psi_{\ul,c})\is\Psi_{\ul,-c}$.
\\
3) Let $\mL$ be a Kummer local system on $T$. We have \[H^i_{\text{dR}}(\Psi_{\underline\lambda,c}\otimes\mL)=0\] for $i\neq 0$ and 
$\dim H^0_{\text{dR}}(\Psi_{\underline\lambda,c}\otimes\mL)=1$. Moreover, we have 
a canonical isomorphism
\[\Psi_{\underline\lambda,c}*\mL\is H^0_{\text{dR}}(\Psi_{\underline\lambda,c}\otimes\mL^{-1})\otimes\mL.\]
\\
4) 
Consider the functor 
of right convolution with $\Psi_{\ul,c}\in D_U(X)$:
\[(-)*\Psi_{\ul,c}:D(X)\ra D(X).\]
The functor above 
preserves the $T$-monodromic subcategory $D(X)_{mon}\subset D(X)$, where $T$ acts on $X=G/U$ from the right.
\quash{
, and the resulting functor 
$(-)*\Psi_\ul:D(X)_{mon}\ra D(X)_{mon}$ is exact with respect to the 
natural $t$-structure on $D(X)_{mon}$.}
\end{proposition}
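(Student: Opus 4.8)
The plan is to prove the four assertions more or less in the order stated, since each is a fairly direct computation with exponential and Kummer $D$-modules on a torus, together with the reduction to the one-dimensional case via $\pr_{\ul}$ and $\tr$.

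For (1), I would argue that since $\tr^*(\bC[x]e^{cx})$ is a nonzero holonomic $D$-module on $\bG_m^r$ and $\pr_{\ul}$ is a homomorphism of tori with image $T_{\ul}$, it suffices to factor $\pr_{\ul}$ as a surjection onto $T_{\ul}$ followed by a closed immersion. The key point is that $\tr^*(\bC[x]e^{cx})$, restricted along the fibres of $\pr_{\ul}:\bG_m^r\to T_{\ul}$ (which are translates of a subtorus), has no higher de Rham cohomology and has one-dimensional $H^0$; this is the standard fact that the exponential $D$-module $e^{cx}$ on $\bG_a$ is clean and that $\sigma$-positivity guarantees the relevant linear functional restricts nontrivially to each fibre, so the cohomology is that of a nonconstant additive character. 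Thus $\pr_{\ul,*}\tr^*(\bC[x]e^{cx})=\pr_{\ul,!}\tr^*(\bC[x]e^{cx})$ is a local system on $T_{\ul}$, which gives the isomorphism $\Psi^!_{\ul,c}\is\Psi_{\ul,c}$ at the same time. For (2), duality commutes with proper (here finite, hence proper on the image) pushforward and with $\star$ turning into $*$ under $\mathbb D$; combined with $\mathbb D(\bC[x]e^{cx})\is\bC[x]e^{-cx}$ on $\bG_a$ (up to the usual shift, which cancels here because everything is a local system in degree $0$), this gives $\mathbb D(\Psi_{\ul,c})\is\Psi_{\ul,-c}$.

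For (3), I would compute $H^*_{\text{dR}}(\Psi_{\ul,c}\otimes\mL)$ by pulling back through $\pr_{\ul}$: since $\Psi_{\ul,c}=\pr_{\ul,*}\tr^*(\bC[x]e^{cx})$ and $\mL$ is a Kummer local system, the projection formula gives $R\Gamma_{\text{dR}}(\Psi_{\ul,c}\otimes\mL)\is R\Gamma_{\text{dR}}(\bG_m^r,\tr^*(\bC[x]e^{cx})\otimes\pr_{\ul}^*\mL)$, and $\pr_{\ul}^*\mL$ is again a (product of) Kummer local system(s) on $\bG_m^r$. The one-variable computation $H^*_{\text{dR}}(\bG_m,\bC[x]e^{cx}|_{\bG_m}\otimes\mL_\chi)$ vanishes outside degree $0$ and is one-dimensional in degree $0$ — this is the classical computation of the cohomology of a Kloosterman/hypergeometric type $D$-module, or more elementarily the Euler characteristic count plus a vanishing-of-$H^0$-and-$H^2$ argument — and a Künneth argument over the $r$ factors (using that $\tr$ is the sum map so the twist is never cohomologically trivial on any factor) gives the claim. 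The canonical isomorphism $\Psi_{\ul,c}*\mL\is H^0_{\text{dR}}(\Psi_{\ul,c}\otimes\mL^{-1})\otimes\mL$ then follows from the general principle that convolving a local system with a Kummer local system $\mL$ on a torus projects onto the $\mL$-isotypic part: $\mF*\mL\is \big(\pi_*(\mF\otimes a^*\mL)\big)$ where $a$ is an appropriate map, and the $T$-monodromicity forces all of $\mF*\mL$ to be $\mL$-monodromic, so it is $H^0_{\text{dR}}(\mF\otimes\mL^{-1})\otimes\mL$.

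For (4), the point is that $\Psi_{\ul,c}$, as an object of $D_U(X)$ acting on $D(X)$ by right convolution, is built from the torus $D$-module $\Psi_{\ul,c}$ on $T=B/U$ via the standard realization of $D(T)_{\text{hol}}$ inside $D_U(X)$; right convolution with something coming from the right $T$-action manifestly preserves $T$-monodromicity since $T$-monodromicity is a condition on the (weak) $T$-equivariance under that same right action, and convolution on the $T$-factor commutes with the monodromy operators. More concretely, for $\cF\in D(X)_{mon}$ the object $\cF*\Psi_{\ul,c}$ is computed as $m_*(\cF\boxtimes\Psi_{\ul,c})$ along the right $T$-action map $X\times T\to X$, and since both $\cF$ and $\Psi_{\ul,c}$ are weakly $T$-equivariant (the latter for the natural $T\times T$-monodromicity on $T$), the pushforward is again weakly $T$-equivariant, i.e. $T$-monodromic. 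The main obstacle in the whole argument is the one-variable cohomology vanishing in (3) — making precise that $H^*_{\text{dR}}(\bG_m, e^{cx}\otimes\mL_\chi)$ is concentrated in degree zero and one-dimensional, uniformly in the Kummer character $\chi$ — and then checking that the Künneth combination over the $r$ cocharacters never produces extra cohomology, which is exactly where $\sigma$-positivity (ensuring $\tr$ restricts nontrivially to every coordinate subtorus appearing in a fibre) is used; the rest is formal manipulation of $!$- and $*$-pushforwards, duality, and the projection formula.
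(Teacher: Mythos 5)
Your parts (2) and (3) are essentially sound (the paper itself simply cites \cite[Theorems 4.2, 4.8]{BK1} for (1)--(3) and deduces (4) from (3)): the projection formula, K\"unneth, and the one-variable fact that $\on{coker}(x\partial_x+cx+s)$ on $\bC[x^{\pm1}]$ is one-dimensional do give (3). But there are two genuine errors. The first is in part (1), which is in fact the hard point of the proposition, not the one-variable computation you single out at the end. The fibres of $\pr_{\ul}:\bG_m^r\ra T_{\ul}$ are cosets of its kernel, a diagonalizable subgroup of dimension $d=r-\dim T_{\ul}$, and $\tr=\sum x_i$ restricts to such a coset as a Laurent polynomial, \emph{not} as a nonconstant additive character; the de Rham cohomology of the corresponding exponential $D$-module is concentrated in the top degree $d$ and has rank generally greater than one. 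Already for $T=\bG_m$ and $\ul=\{\id,\id\}$ the fibre over $a$ is $\{(x,a/x)\}$, $\tr$ restricts to $x+a/x$, and $\Psi_{\ul,c}$ is the rank-two Kloosterman/Bessel local system -- so "one-dimensional $H^0$ and no higher cohomology" is false, and $\pr_\ul$ is not finite (contrary to your aside in part (2); the duality argument survives because $\mathbb D f_*\is f_!\mathbb D$ needs no properness, plus part (1) applied to $-c$). More importantly, the cleanness $\Psi^!_{\ul,c}\is\Psi_{\ul,c}$ and the constancy of the fibrewise dimension are exactly where $\sigma$-positivity enters, via a compactification along whose boundary $\tr$ degenerates in a controlled way (this is the content of the proof in \cite{BK1} and of the argument of Cheng--Ng\^o); without positivity the statement fails, e.g.\ for the pair of cocharacters $t\mapsto t$, $t\mapsto t^{-1}$ the restriction of $\tr$ to the fibre over $a=-1$ is constant and $!\neq *$ there. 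Your sketch neither states the correct fibrewise assertion nor supplies the compactification argument.

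The second error is in part (4): it rests on the claim that $\Psi_{\ul,c}$ is weakly $T$-equivariant, i.e.\ $T$-monodromic, on $T$. It is not. Already $\Psi(\id,c)=\bC[x^{\pm1}]e^{cx}$ on $\bG_m$ has no locally finite action of $v=x\partial_x$ (the $\bC[v]$-submodule generated by $e^{cx}$ is infinite-dimensional), and the Kloosterman module is irregular at infinity, hence not an iterated extension of Kummer local systems. So the step "both factors are weakly $T$-equivariant, hence so is the pushforward" is unavailable. The correct route, and the one the paper takes, is to deduce (4) from (3): decompose a $T$-monodromic object according to generalized exponents $\xi\in\breve T$ and use that $\Psi_{\ul,c}*\mL_\xi$ is again $\mL_\xi$-isotypic (together with its unipotent refinement, cf.\ Lemma \ref{Psi_c}), so that convolution with $\Psi_{\ul,c}$ preserves each generalized-monodromy block.
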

\begin{proof}
Part 1), 2), 3) are proved in \cite[Theorem 4.2, Theorem 4.8 ]{BK1}.
Part 4) follows from part 3).
\end{proof}
Let $\underline\lambda=(\lambda_1,..,\lambda_r)$ be a collection of 
$\sigma$-positive cocharacters. 
Recall that the Weyl group $\rW$ acts naturally 
on $\mathbb X_\bullet(T)$ and we assume that the set $\{\lambda_i\}_{i=1,...,r}$ are invariant under 
this action. Following \cite{BK} (see also \cite{CN}), we shall construct a $\rW$-equivariant structure on $\Psi_{\ul}$.
Let $(\lambda_{i_1},...,\lambda_{i_k})$ be the distinct cocharacters appearing in $\ul$ and 
$m_l$ be the multiplicity of $\lambda_{i_l}\in\ul$. Let $A_i=\{\lambda_l|\lambda_l=\lambda_{i_l}\}$.
Then we have $\{\lambda_1,...,\lambda_r\}=A_1\sqcup...\sqcup A_k$.
The symmetric group on $r$-letters $\mathrm S_r$ acts naturally on $\{\lambda_1,...,\lambda_r\}$
and we define  $\rS_\ul=\{\sigma\in\rS_r|\sigma(A_i)=A_i\}$. 
There is
a canonical isomorphism 
\[\rS_\ul\is\rS_{m_1}\times\cdot\cdot\cdot\times\rS_{m_k}.\]
Define $\rS'_\ul=\{\eta\in\rS_r|\text{such that}\ \eta(A_i)=A_{\tau(i)}\ \text{for a}\ \tau\in\rS_k\}$.
We have a natural map $\pi_k:\rS_\ul'\ra\rS_k$ sending $\eta$ to $\tau$. The 
kernel of $\pi_k$ is isomorphic to $\rS_\ul$ and 
its image, denote by $\rS_{k,\ul}$, 
consists of $\tau\in\rS_k$ such that $m_i=m_{\tau(i)}$. 
In other words, there is a short exact sequence 
\[0\ra\rS_\ul\ra\rS_\ul'\stackrel{\pi_k}\ra\rS_{k,\ul}\ra 0.\] 

Notice that the Weyl group $\rW$ acts on $\{\lambda_{i_1},...,\lambda_{i_k}\}$
and the induced map $\rW\ra\rS_k$ has image $\rS_{k,\ul}$.
So we have a map $\rho:\rW\ra\rS_{k,\ul}$. Pulling back the short exact sequence 
above along $\rho$, we get an extension of $W'$ of $W$ by $\rS_\ul$
\[0\ra\rS_\ul\ra W'\ra W\ra 0\] 
where an element in $w'\in W'$ consists of pair $(w,\eta)\in\rW\times\rS_\ul'$ such
that $\rho(w)=\pi_k(\eta)\in\rS_{k,\ul}$.

The group $\rW'$ acts on $\bG_m^r$ (resp. $T$) via the composition of 
the action of $\rS_r$ (reps. $\rW$) with the natural projection $\rW'\ra\rS_\ul'\subset\rS_r$ 
(resp. $\rW'\ra\rW$) and the map $\pr_\ul:\bG_m^r\ra T$ and $\tr:\bG_m^r\ra\bG_a$ is $W'$-equivaraint
where $\rW'$ acts trivially on $\bG_a$. 

Since $\Psi_{\ul,c}\is(\pr_\ul)_*\tr^*(\bC[x]e^{cx})$, the discussion above 
implies for each $w'=(w,\eta)\in\rW'$ there is an isomorphism 
\[i'_{w'}:\Psi_{\ul,c}\is w^*\Psi_{\ul,c}.\]
We define 
\beq\label{i_w'}
i_{w'}=\on{sign}_r(\eta)\on{sign}_{\rW}(w)i'_{w'}:\Psi_{\ul,c}\is w^*\Psi_{\ul,c}
\eeq
where $\on{sign}_r$ and $\on{sign}_{\rW}$ are the sign characters of $\rS_r$
and $\rW$. 
According to \cite{BK1}, the isomorphism $i_{w'}$ depends only on $w$. Denote the resting isomorphism by $i_w$,
then the data $(\Psi_{\ul,c},\{i_w\}_{w\in\rW})$ defines a $\rW$-equivariant structure on $\Psi_{\ul,c}$.

\subsection{Mellin transform}\label{MT}
Let $x_i\in\Lambda$ be a basis and consider the 
regular function $\mO(T)\is\bC[x_i^{\pm1}]$ and the algebra 
of differential operators 
$\Gamma(\mD_T)\is\bC[x_i^{\pm1}]\langle v_i\rangle/\{v_ix_j=x_j(\delta_{ij}+v_i)\}$
where $v_i=x_i\partial_{\xi}\in\ft$ are a basis for the $T$-invariant 
vector fields.
The Mellin transform functor 
\[\mathfrak M:\cM(T)\ra\bC[v_i]\on{-mod}^\Lambda,\  \cN\ra\Gamma(\cN),\]
defined by considering $\Gamma(\mD_T)$ as algebra of difference 
operators $\bC[v_i]\langle x_i^{\pm1}\rangle/\{v_ix_j=x_j(\delta_{ij}+v_i)\}$,
is 
an equivalence of abelian categories between 
$D$-modules on $T$ and  
$\Lambda$-equivariant $\bC[v_i]$-modules.
Consider the derived category 
of $\Lambda$-equivariant 
$\bC[v_i]$-modules $D(\bC[v_i]\on{-mod}^\Lambda)$
with the monoidal 
structure given by the derived tensor product over 
$\bC[v_i]$. 
We have 
$\mathfrak M(\cM*\cN)\is\mathfrak M(\cM)\otimes^L_{\bC[v_i]}\mathfrak M(\cN)$.  

Let $\rW'\subset\rW$ be a subgroup.
Consider the category 
$\cM_{\rW'}(T)$ (resp. 
$\bC[v_i]\on{-mod}^{\rW'\ltimes\Lambda}$)
of $\rW'$-equivariant $D$-module on $T$ (resp. 
$\rW'\ltimes\Lambda$-equivariant $\bC[v_i]$-modules).
Then the functor $\mathfrak M$ extends naturally to an equivalence of 
categories
\[\mathfrak M_{\rW'}:\cM_{\rW'}(T)\ra
\bC[v_i]\on{-mod}^{{\rW'}\ltimes\Lambda}.\]

Note that we have a canonical equivalence 
$\bC[v_i]\on{-mod}^{\Lambda}\is\on{QCoh}(\ft^*)^{\Lambda}$ (resp. 
$\bC[v_i]\on{-mod}^{\rW'\ltimes\Lambda}\is\on{QCoh}(\breve\ft)^{\rW'\ltimes\Lambda}$)
where $\on{QCoh}(\breve\ft)^{\Lambda}$ (resp. $\on{QCoh}(\breve\ft)^{\rW\ltimes\Lambda}$) is the category of 
$\Lambda$-equivariant (resp. $\rW'\ltimes\Lambda$-equivariant) quasi-coherent sheaves on $\breve\ft$.
We write 
$\underline{\mathfrak M}:\cM(T)\ra\on{QCoh}(\ft^*)^{\Lambda}$
(resp. $\underline{\mathfrak M}_{\rW'}:\cM_{\rW'}(T)\ra\on{QCoh}(\ft^*)^{\rW'\ltimes\Lambda}$) for the composition of 
$\mathfrak M$ (resp. $\mathfrak M_{\rW'}$) with the equivalence above.

Let $(\cN,c_w:\cN\is w^*\cN)\in\cM_{\rW'}(T)$. Then
the $\rW'$-equivariant structure on $\mathfrak M_{\rW'}(\cN)$
is determined by the following $\rW'$-action on $\mathfrak M(\cN)$:
\beq\label{W'-action}
a_w:\mathfrak M(\cN)=\Gamma(\cN)\stackrel{\Gamma(c_w)}\is\Gamma(w^*\cN)\is\Gamma(\cN)=
\mathfrak M(\cN),\ w\in\rW'.
\eeq

\subsection{Mellin transform of $\Psi_{\ul,c}$}
We describe the Mellin transform of
$\Psi_{\ul,c}$. 
Recall \[\Psi_{\ul,c}\is(\pr_\ul)_*\on{tr}^*(\bC[x]e^{cx})\is
(\bC[x_1^{\pm1},...,x_r^{\pm1}]e^{\sum cx_i}\otimes_{\mO_{\bG_m^r}}\omega_{\bG_m^r})\otimes
_{\mD_{\bG_m^r}}\mD_{\bG_m^r\ra T}\otimes_{\mO_T}\omega_T^{-1}. \]
Fix a nowhere vanishing $\bG_m^r$-invariant (resp. $T$-invariant) section 
$r_1\in\Gamma(\omega_{\bG_{m}^r})$ (resp. $r_2\in\Gamma(\omega_T)$). Then the trivialization $\cO_{\bG_m^r}\is\omega_{\bG_m^r}$
(resp. $\mO_{T}\is\omega_T$)
induced by $r_1$ (resp. $r_2$) defines an isomorphism 
\[t_\ul:\mathfrak M(\Psi_{\ul,c})\is\bC[x_1^{\pm1},...,x_r^{\pm1}]e^{\sum cx_i}\otimes_{\bC[v_1,...,v_r]}S
\]
where $v_i=x_i\partial_{x_i}$
and the $\bC[v_1,...,v_r]$-module structure on $S=\on{Sym}(\ft)$ 
is given by 
$v_i\cdot s=d\lambda_i(v_i)s$, $s\in S$, here $d\lambda_i:\bC[v_i]\ra S$
is the differential of the cocharacter $\lambda_i$.
Recall the $\rW$-action on $\mathfrak M(\Psi_{\ul,c})$:
\[a_w:\mathfrak M(\Psi_{\ul,c})=\Gamma(\Psi_{\ul,c})\stackrel{\Gamma(i_{w})}\ra
\Gamma(w^*\Psi_{\ul,c})\is\Gamma(\Psi_{\ul,c})=\mathfrak M(\Psi_{\ul,c}),\ w\in\rW.\]
Let $w'=(w,\eta)\in\rW'$. We have 
$\eta^*(r_1)=\on{sign}(\eta)r_1$ (resp. $w^*r_2=\on{sign}(w)r_2$) and 
the construction of $i_{w'}$ (see (\ref{i_w'}))
implies the following description of 
$a_w$: 
the following diagram commutes
\beq\label{description of i_w}
\xymatrix{\mathfrak M(\Psi_{\ul,c})\ar[d]^{a_w}\ar[r]^{t_\ul\ \ \ \ \ \ \ \ \ \ \ \ \ \ \ \ }&\bC[x_1^{\pm1},...,x_r^{\pm1}]e^{\sum cx_i}\otimes_{\bC[v_1,...,v_r]}S\ar[d]^{b_{w'}}
\\\mathfrak M(\Psi_{\ul,c})\ar[r]^{t_\ul\ \ \ \ \ \ \ \ \ \ \ \ \ \ \ \ }&\bC[x_1^{\pm1},...,x_r^{\pm1}]e^{\sum cx_i}
\otimes_{\bC[v_1,...,v_r]}S},
\eeq
where  
$b_{w'}:\bC[x_1^{\pm1},...,x_r^{\pm1}]e^{\sum cx_i}\otimes_{\bC[v_1,...,v_r]}S\ra\bC[x_1^{\pm1},...,x_r^{\pm1}]e^{\sum cx_i}\otimes_{\bC[v_1,...,v_r]}S 
$, $f\otimes s\ra \eta(f)\otimes w(s)$.

\subsection{Mellin transform of 
$\mE_\xi$ and $\mL_\xi^n$}\label{MT of E_xi}
We describe the Mellin transforms of $\mE_\xi$ and $\mL^n_\xi$.
To every $\mu\in\breve\ft$ let $l_\mu:\breve\ft\ra\breve\ft, v\ra v-\mu$.
Recall the $S$-module $S_\xi=S/S^{\rW_\xi}_+$, $S_n=S/S_+^n$
introduced in \S\ref{central Loc}.
Let $\cS_\xi:=\mO_{\breve\ft}\otimes_SS_\xi$, 
$\cS_n
:=\mO_{\breve\ft}\otimes_SS_n$ be quasi-coherent sheaves on 
$\breve\ft$ corresponding to $S_\xi=S/S^{\rW_\xi}_+$ and $S_n=S/S_+^n$. We define $\cS_\xi^\mu:=l_\mu^*\cS_\xi$, $\cS_n^{\mu}:=l_\mu^*\cS_n$
and write
$S_\xi^\mu=\Gamma(\cS_\xi^\mu)$, $S_n^{\mu}=\Gamma(\cS_\xi^\mu)$.
Note that the $S$-module $S_\xi^\mu$ here agrees with the one 
in (\ref{central element}).
Clearly we have 
$l^*_\lambda\cS_\xi^\mu\is\cS_\xi^{\mu+\lambda}$
(resp.
$l^*_\lambda\cS_n^{\mu}\is\cS_n^{\mu+\lambda}$)
 for $\lambda\in\ft^*$.
Consider the projection map $\pi_\xi:\breve\ft\ra\breve\ft/\rW_\xi$.
We have
$\cS_\xi\is\pi_\xi^*\delta$, where $\delta$ the the skyscraper sheaf supported at 
$0\in\ft^*/\rW_\xi$, and 
the equality $\pi_\xi\circ l_{w(\mu)}\circ w=\pi_\xi\circ l_\mu$
defines an isomorphism 
$\cS_\xi^\mu\is w^*\cS_\xi^{w(\mu)}$. 

The isomorphisms 
$l^*_\lambda\cS_n^{\mu}\is\cS_n^{\mu+\lambda}$ for $\lambda\in\Lambda$
defines a 
$\Lambda$-equivariant structure on 
$\bigoplus_{\mu\in\ft^*,[\mu]=\xi}\cS_n^{\xi,\mu}$ and we have 
\[\underline{{\mathfrak M}}_{}(\mL_\xi^n)\is\bigoplus_{\mu\in\breve\ft,[\mu]=\xi}\cS_n^{\mu}\in
\on{QCoh}(\breve\ft)^{\Lambda},\ \ 
(\on{resp}.\  {\mathfrak M}_{}(\mL_\xi^n)\is\bigoplus_{\mu\in\breve\ft,[\mu]=\xi}S_n^{\mu}
\in S\on{-mod}^{\Lambda}).\]
Similarly,
the isomorphisms 
$\cS_\xi^{\mu+\lambda}\is
l^*_\lambda\cS_\xi^\mu, \lambda\in\Lambda$
and $\cS_\xi^\mu\is w^*\cS_\xi^{w(\mu)}$ define a $\rW_\xi\ltimes\Lambda$-equivariant structure on 
$\bigoplus_{\mu\in\ft^*,[\mu]=\xi}\cS_\xi^\mu$ and we have  
\[\underline{{\mathfrak M}}_{\rW_\xi}(\mE_\xi)\is\bigoplus_{\mu\in\breve\ft,[\mu]=\xi}\cS_\xi^\mu\in
\on{QCoh}(\breve\ft)^{\rW_\xi\ltimes\Lambda},\ \
(\on{resp.}\  {\mathfrak M}_{\rW_\xi}(\mE_\xi)\is\bigoplus_{\mu\in\breve\ft,[\mu]=\xi}S_\xi^\mu\in
S\on{-mod}^{\rW_\xi\ltimes\Lambda}).\]
Let 
\[
u_w^\mu:S_\xi^\mu=\Gamma(\cS_\xi^\mu)\ra\Gamma(w^*\cS_\xi^{w(\mu)})\is\Gamma(\cS_\xi^{w(\mu)})=S_\xi^{w(\mu)}
\]
be the map induced by the isomorphism $\cS_\xi^\mu\is w^*\cS_\xi^{w(\mu)}$.
Then the $\rW_\xi$-action on $\mathfrak M(\mE_\xi)$, defined in (\ref{W'-action}), 
decomposes as 
\beq\label{u_w}
u_w=\oplus u_w^\mu:\mathfrak M(\mE_\xi)=\bigoplus_{\mu\in\breve\ft,[\mu]=\xi}
S_\xi^\mu\ra 
\bigoplus_{\mu\in\breve\ft,[\mu]=\xi}
S_\xi^{w(\mu)}
=\mathfrak M(\mE_\xi),\ w\in\rW_\xi.
\eeq

\subsection{}
\quash{
We give an equivalent construction of $i_w'$ hence that of $i_w$. 
Notice that the canonical isomorphism $w^*\Psi(\lambda)\is \Psi(w(\lambda))$ induces 
an isomorphism 
\[a_w:w^*\Psi_\ul\is w^*\Psi(\lambda_1)*\cdot\cdot\cdot*w^*\Psi(\lambda_r)\is
\Psi(w(\lambda_1))*\cdot\cdot\cdot*\Psi(w(\lambda_r)).\]
On the other hand, the permutation action of $\eta\in\rS_r$ 
on $\bG_m^r$ defines a canonical isomorphism 
\[a_\eta:\Psi_\ul\is\Psi(\lambda_{\eta(1)})*\cdot\cdot\cdot*\Psi(\lambda_{\eta(r)}).\]
We have 
\beq\label{const of i_w'}
i_w':
\Psi_\ul\stackrel{a_\eta}\is
\Psi(\lambda_{\eta(1)})*\cdot\cdot\cdot*\Psi(\lambda_{\eta(r)})=
\Psi(w(\lambda_1))*\cdot\cdot\cdot*\Psi(w(\lambda_r))\stackrel{a_w}\is w^*\Psi_\ul,
\eeq
where the middle equality follows from 
$\lambda_{\eta(i)}=w(\lambda_i)$
for $w'=(w,\eta)\in W'$. 

}

\quash{
\begin{lemma}\label{w-equ}
1) The isomorphism $i_{w'}$ depends only on $w$. Denote the resting isomorphism by $i_w$,
then the data $(\Psi_\ul,\{i_w\}_{w\in\rW})$ defines a $\rW$-equivariant structure on $\Psi_\ul$.
\\
2)
To every $w\in\rW$ and $\xi\in\breve T$, there exist a canonical isomorphism 
\[a_{w,\xi}:\Ind_T^G(\Psi_\ul*\mE_\xi)\is\Ind_T^G(\Psi_\ul*\mE_{w(\xi)})\]
such that $a_{w_2,w_1(\xi)}\circ a_{w_1,\xi}=a_{w_2w_1,\xi}$ for $w_1,w_2\in\rW$.
\end{lemma}

\begin{proof}
Part 1) is proved in \cite{BK} (see also \cite{CN}).
To prove part 2) we observe that  
the $\rW$-equivariant structure $(\Psi_\ul,i_w)$ in Lemma \ref{w-equ} induces an isomorphism  
\[w^*(\Psi_{\ul}*\mE_\xi)\is w^*(\Psi_\ul)*\mE_{w(\xi)}\stackrel{i_w}\is\Psi_\ul*\mE_{w(\xi)}.\] 
The isomorphism above defines 
\[a_{w,\xi}:\Ind_T^G(\Psi_{\ul}*\mE_\xi)\is
\Ind_T^G(w^*(\Psi_{\ul}*\mE_\xi))\is\Ind_T^G(\Psi_{\ul}*\mE_{w(\xi)}).\]
Using Lemma \ref{} one can check that $\{a_{w,\xi}\}$ satisfy the required properties. This finishes the proof of the Lemma.

\end{proof}}

We have the following key proposition whose proof will be given in section \ref{proof of key prop}.
\begin{proposition}\label{key prop}
There is an isomorphism 
\[\Psi_{\ul,c}*\mE_\xi\is\mE_\xi\]
of $\rW_\xi$-equivaraint local systems on $T$. 
\end{proposition}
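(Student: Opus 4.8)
The cleanest route is through the Mellin transform, since both sides are $\rW_\xi$-equivariant local systems on $T$ and the Mellin transform is a faithful exact functor to $\rW_\xi\ltimes\Lambda$-equivariant modules over $\bC[v_i]\cong S$. By \S\ref{MT} we have $\mathfrak M_{\rW_\xi}(\Psi_{\ul,c}*\mE_\xi)\cong\mathfrak M(\Psi_{\ul,c})\otimes^L_{\bC[v_i]}\mathfrak M(\mE_\xi)$ with the diagonal $\rW_\xi$-action, so it suffices to produce a $\rW_\xi$-equivariant isomorphism of graded $S$-modules
\[
\bigl(\bC[x_1^{\pm1},\dots,x_r^{\pm1}]e^{\sum cx_i}\otimes_{\bC[v_i]}S\bigr)\otimes^L_{S}\Bigl(\bigoplus_{\mu\in\breve\ft,[\mu]=\xi}S_\xi^\mu\Bigr)\ \cong\ \bigoplus_{\mu\in\breve\ft,[\mu]=\xi}S_\xi^\mu .
\]
First I would check that the derived tensor product is in fact underived here: $\bC[x_1^{\pm1},\dots,x_r^{\pm1}]e^{\sum cx_i}$ is, via $v_i\mapsto d\lambda_i(v_i)$, a module over $S$ which restricts on each $\bC[v_i]$-line to a copy of $\bC[v_i^{\pm1}]$ up to the $e^{cx}$-twist — in particular it is flat over $\bC[v_i]$, hence (by the $\sigma$-positivity hypothesis, which makes $\bigoplus\lambda_i$ a finite surjection and $S$ finite and free over the $\bC[v_i]$ it is built from) flat over $S$ after the relevant localization. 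So the $\mathrm{Tor}$'s vanish and we are reduced to an honest computation with the module $S_\xi^\mu=S/S\cdot S_+^{\rW_\xi}$ shifted by $\mu$.

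**Reduction to the $\Gamma$-function computation.** The substance is then part 3) of Proposition \ref{Psi and L}: for any Kummer local system $\mL$ on $T$ one has $H^i_{\mathrm{dR}}(\Psi_{\ul,c}\otimes\mL)=0$ for $i\neq0$, $\dim H^0=1$, and $\Psi_{\ul,c}*\mL\cong H^0_{\mathrm{dR}}(\Psi_{\ul,c}\otimes\mL^{-1})\otimes\mL$. Since $\mE_\xi$ is an iterated extension of Kummer local systems $\mL_\xi$ (all with the same monodromy class $\xi$, the unipotent part living in $S_\xi$), convolution with $\Psi_{\ul,c}$ takes $\mE_\xi$ to a local system with the same semisimplification, namely an iterated self-extension of $\mL_\xi$ of the same length $\dim S_\xi=|\rW|/|\rW_\xi|$. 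To pin down that it is exactly $\mE_\xi$ and not some other extension, I would argue on the Mellin side: by \S\ref{MT of E_xi}, $\mathfrak M(\mE_\xi)=\bigoplus_{[\mu]=\xi}S_\xi^\mu$, and tensoring with the flat $S$-module $\mathfrak M(\Psi_{\ul,c})$ over $S$ multiplies each graded piece by the rank-one (over the appropriate completion at $\mu$) module $\mathfrak M(\Psi_{\ul,c})\otimes_S \widehat{S}_\mu$; the content of Proposition \ref{Psi and L}(3) is precisely that this rank-one module is free, i.e. the "$\Gamma$-factor" $H^0_{\mathrm{dR}}(\Psi_{\ul,c}\otimes\mL_\mu^{-1})$ is one-dimensional. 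Thus $\mathfrak M(\Psi_{\ul,c}*\mE_\xi)\cong\bigoplus_{[\mu]=\xi}S_\xi^\mu$ as $S$-modules, and after identifying generators this is $\mathfrak M(\mE_\xi)$.

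**Tracking the $\rW_\xi$-equivariance.** The remaining — and I expect the fiddliest — step is to check the isomorphism is $\rW_\xi$-equivariant. Here I would use the explicit descriptions in display \eqref{description of i_w} and display \eqref{u_w}: the $\rW$-action $a_w$ on $\mathfrak M(\Psi_{\ul,c})$ is $f\otimes s\mapsto\eta(f)\otimes w(s)$ twisted by $\mathrm{sign}_r(\eta)\mathrm{sign}_\rW(w)$, while the $\rW_\xi$-action $u_w$ on $\mathfrak M(\mE_\xi)=\bigoplus S_\xi^\mu$ permutes the summands via $u_w^\mu\colon S_\xi^\mu\to S_\xi^{w(\mu)}$. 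The diagonal action on the tensor product therefore sends the $\mu$-summand to the $w(\mu)$-summand, and one must check that the identification of $\mathfrak M(\Psi_{\ul,c})\otimes_S S_\xi^\mu$ with $S_\xi^\mu$ intertwines the sign-twisted permutation action with the $u_w^\mu$. The point will be that the two sign characters cancel against the sign coming from $\eta^*r_1=\mathrm{sign}(\eta)r_1$ and $w^*r_2=\mathrm{sign}(w)r_2$ used to fix the trivializations $t_\ul$, exactly as in the normalization \eqref{i_w'} — this is why the sign twist was inserted in the definition of $i_{w'}$. So the equivariance is really a bookkeeping check that the normalizations were chosen consistently; I would do it one $\rW$-coset at a time using that any $T$-equivariant structure on a local system on $T$ is trivial (the same fact used in the proof of Proposition \ref{CS}(4)). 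The main obstacle is organizing this last compatibility cleanly rather than anything deep; the hard analytic input, the one-dimensionality of the $\Gamma$-factor, is already supplied by Proposition \ref{Psi and L}(3).
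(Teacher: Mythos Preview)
Your strategy --- compute on the Mellin side and then verify $\rW_\xi$-equivariance --- is exactly the paper's. The underived-ness is fine: $\bC[x_1^{\pm1},\dots,x_r^{\pm1}]e^{\sum cx_i}$ is torsion-free over the polynomial ring $\bC[v_1,\dots,v_r]$ (applying $v_i$ strictly raises the $x_i$-degree, since $c\neq0$), hence flat, so the derived tensor product collapses. And your reduction to the one-dimensionality of the $\Gamma$-factor is the right heuristic.

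Where the proposal has a genuine gap is the $\rW_\xi$-equivariance check. You correctly note that the sign twists in $i_{w'}$ were inserted precisely to cancel the signs from $\eta^*r_1$ and $w^*r_2$, so that the induced action on $\mathfrak M(\Psi_{\ul,c})$ is the untwisted $b_{w'}\colon f\otimes s\mapsto\eta(f)\otimes w(s)$. But after the signs are gone, there is still substantive content: you must show that your identification $\mathfrak M(\Psi_{\ul,c})\otimes_S S_\xi^\mu\cong S_\xi^\mu$ is compatible, as $\mu$ ranges over its $\rW_\xi$-orbit, with the permutation $b_{w'}$ induces. This is not automatic --- the identification depends on a choice of generator for the rank-one piece, and different choices differ by automorphisms of $S_\xi^\mu$, which need not be scalars since $S_\xi^\mu$ is not simple. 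Your appeal to ``any $T$-equivariant structure on a local system on $T$ is trivial'' does not address this; that fact was used in Proposition~\ref{CS}(4) for a different purpose and does not control how generators in different $\mu$-pieces match up.

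The paper resolves this by making the choice explicit. Writing $\lambda_i(\mu)=a_{\lambda_i,\mu}+n_{\lambda_i,\mu}$ with $a_{\lambda_i,\mu}\in[0,1)$ and $n_{\lambda_i,\mu}\in\bZ$, one takes the free rank-one $\bC[v_1,\dots,v_r]$-submodule
\[
E_{\ul,\mu}\ :=\ \bC[v_1,\dots,v_r]\cdot\Bigl(\prod_i x_i^{\,n_{\lambda_i,\mu}}\Bigr)e^{\sum cx_i}
\]
and checks (from the relation $v\cdot x^ne^{cx}=(nx^n+cx^{n+1})e^{cx}$) that the quotient $\bC[x_i^{\pm1}]e^{\sum cx_i}/E_{\ul,\mu}$ is supported away from the support of $S_\xi^\mu$, so tensoring with $S_\xi^\mu$ gives $E_{\ul,\mu}\otimes S_\xi^\mu\cong S_\xi^\mu$ via the chosen generator. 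The equivariance then reduces to the concrete computation
\[
a_\eta\Bigl(\prod_i x_i^{\,n_{\lambda_i,\mu}}\Bigr)\ =\ \prod_i x_{\eta(i)}^{\,n_{\lambda_i,\mu}}\ =\ \prod_i x_i^{\,n_{\lambda_i,w(\mu)}},
\]
using that $(w,\eta)\in\rW'$ forces $\lambda_{\eta(i)}=w(\lambda_i)$, hence $n_{\lambda_{\eta^{-1}(i)},\mu}=n_{w^{-1}(\lambda_i),\mu}=n_{\lambda_i,w(\mu)}$. So $a_\eta$ carries the generator for the $\mu$-piece to the generator for the $w(\mu)$-piece on the nose, and the commutative square with $u_w^\mu$ follows. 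This explicit generator is the missing ingredient in your outline; once you supply it, the rest of your sketch goes through.
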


\quash{
\begin{proof}
Since $\hat\mL_\xi*\mE_\xi\is\mE_\xi$
it suffices to show that \[\Psi_\ul*\hat\mL_\xi\is V_{\ul,\xi}\otimes\hat\mL_\xi\] as 
$\rW_\xi$-equivaraint (pro-)local systems on $T$

\end{proof}}

\

\subsection{Gamma $D$-modules on $G$}\label{cons of gamma d mod}
We preserve the setup in \S\ref{Gamma on T}. Let $\ul=(\lambda_1,...,\lambda_r)$
be a collection of $\rW$-invariant $\sigma$-positive co-characters. 
By Proposition \ref{properties of ind}, the $\rW$-equivariant structure on $\Psi_{\ul,c}$ defines a 
$\rW$-action on $\Ind_{T\subset B}^G(\Psi_{\ul,c})$.

\begin{definition}
The gamma $D$-module attached to $\ul$ is the $\rW$-invariant factor of 
the $D$-module $\Ind_{T\subset B}^G(\Psi_{\ul,c})$
\[\Psi_{G,\ul,c}:=\Ind_{T\subset B}^G(\Psi_{\ul,c})^{\rW}.\]
\end{definition}

\quash{
\begin{remark}
In the original definition of gamma $D$-modules \cite{BK} (see also \cite{CN}), the authors use the $\rW$-equivariant structure 
$i_w':=\on{sign}(w)i_w:w^*\Psi_\ul\is\Psi_\ul$ on $\Psi_\ul$ and define 
the gamma $D$-module as 
the $\rW$-invariant factor 
$\Psi_{G,\ul}=\Ind_{T\subset B}^G(\Psi_\ul)^\rW$
for the $\rW$-action on $\Ind_{T\subset B}^G(\Psi_\ul)$ induced by the 
$\rW$-equivariant structure $(\Psi_\ul,i_w')$. Clearly, those two definitions 
of gamma $D$-modules 
are equivalent.
\end{remark}
}

Giving a representation $\rho:\breve G\ra GL(V_\rho)$, its restriction 
$\rho|_{\breve T}$ is diagonalizable, i.e. there exist 
a collection of co-characters $\ul_\rho=\{\lambda_1,...,\lambda_r\}
$ such that $V_{\rho}=\bigoplus V_{\lambda_i}$ where 
$\breve T$ acts on $V_{\lambda_i}$ by the $\lambda_i\in
\Hom(\bG_m,T)\is\Hom(\breve T,\bG_m)$. Note that 
$\ul_\rho$ is automatically $\rW$-stable. Assume 
each $\lambda_i\in\ul_\rho$ is $\sigma$-positive 
and $\pr_{\ul_\rho}$ is onto, then the gamma $D$-module (or rather,
the corresponding gamma sheaf)
$\Psi_{G,\rho,c}:=\Psi_{G,\ul_\rho,c}$ attached to $\ul_\rho$
is the one studied in \cite{BK,BK1}.

The following property of gamma $D$-module follows from Proposition \ref{Psi and L}:
\beq\label{dual of Psi}
\mathbb D(\Psi_{G,\ul,c})\is\Psi_{G,\ul,-c}.
\eeq

Recall the character $D$-module 
$\cM_\theta$ in \S\ref{CS M_theta}.
\begin{thm}\label{conv with M_theta}
There is an isomorphism 
\[\Psi_{G,\ul,c}*\mM_\theta\is \mM_\theta.\]

\end{thm}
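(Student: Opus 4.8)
The plan is to reduce the statement to the torus-level computation already established in Proposition~\ref{key prop}, using the compatibility of convolution with induction functors (part (4) of Proposition~\ref{properties of ind}) together with the acyclicity of $\cM_\theta$ proved in Theorem~\ref{Key}. First I would recall that $\cM_\theta\is\Ind_{T\subset B}^G(\mE_\xi)^{\rW_\xi}$ by \eqref{M_theta}, where $\xi\in\breve T$ is a lift of $\theta$, and that $\Psi_{G,\ul,c}=\Ind_{T\subset B}^G(\Psi_{\ul,c})^\rW$ by definition. Since convolution commutes with taking $\rW$-invariant (resp. $\rW_\xi$-invariant) direct summands — these are just idempotent projectors in a $\bC$-linear idempotent-complete category — it suffices to produce an isomorphism
\[
\Psi_{G,\ul,c}*\Ind_{T\subset B}^G(\mE_\xi)\is\Ind_{T\subset B}^G(\mE_\xi)
\]
that is equivariant for the $\rW$-action on $\Psi_{G,\ul,c}$ (which acts trivially, as $\Psi_{G,\ul,c}$ is already $\rW$-invariant) and the $\rW_\xi$-action on $\Ind_{T\subset B}^G(\mE_\xi)$, and then pass to invariants on both sides. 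Actually it is cleaner to work the other way: convolve $\cM_\theta$ on the \emph{left} by $\Psi_{G,\ul,c}$, write $\cM_\theta=\Ind_{T\subset B}^G(\mE_\theta)^\rW$, and commute the convolution past the $\rW$-projector.

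The main mechanism is part (4) of Proposition~\ref{properties of ind}: for $\mF\in D(T)$ and $\mG\in D_G(G)$ with $\on{Av}_U(\mG)$ supported on $T$, one has $\Ind_{T\subset B}^G(\mF)*\mG\is\Ind_{T\subset B}^G(\mF*\on{Av}_U(\mG))$. I would apply this with $\mG=\cM_\theta$ and $\mF=\Psi_{\ul,c}$ (or rather its $\rW$-equivariant structure): Theorem~\ref{Key} gives $\on{Av}_U(\cM_\theta)\is\mE_\theta$, which is indeed supported on $T$, so
\[
\Ind_{T\subset B}^G(\Psi_{\ul,c})*\cM_\theta\is\Ind_{T\subset B}^G(\Psi_{\ul,c}*\mE_\theta).
\]
Now $\mE_\theta=\Ind_{\rW_\xi}^\rW\mE_\xi$ (as $\rW$-equivariant local systems on $T$), and convolution with $\Psi_{\ul,c}$ commutes with this induction of equivariant structures; combined with Proposition~\ref{key prop}, which gives $\Psi_{\ul,c}*\mE_\xi\is\mE_\xi$ as $\rW_\xi$-equivariant local systems, we obtain $\Psi_{\ul,c}*\mE_\theta\is\mE_\theta$ as $\rW$-equivariant local systems on $T$. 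Plugging back in,
\[
\Ind_{T\subset B}^G(\Psi_{\ul,c})*\cM_\theta\is\Ind_{T\subset B}^G(\mE_\theta),
\]
and this isomorphism is compatible with the $\rW$-actions on both sides (the $\rW$-action coming from the $\rW$-equivariant structures on $\Psi_{\ul,c}$ on the left, on $\mE_\theta$ on the right). Taking $\rW$-invariants — and using that $\Psi_{G,\ul,c}*\cM_\theta$ is computed as the invariants of $\Ind_{T\subset B}^G(\Psi_{\ul,c})*\cM_\theta$ for the \emph{diagonal} $\rW$-action, which by the compatibility is the same as either factor's action — yields
\[
\Psi_{G,\ul,c}*\cM_\theta\is\Ind_{T\subset B}^G(\mE_\theta)^\rW=\cM_\theta.
\]

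The step I expect to be the main obstacle is verifying the $\rW$-equivariance of the isomorphism $\Ind_{T\subset B}^G(\Psi_{\ul,c})*\cM_\theta\is\Ind_{T\subset B}^G(\mE_\theta)$ carefully enough to justify passing to invariants — i.e. checking that the chain of identifications (part (4) of Proposition~\ref{properties of ind}, the induction $\Ind_{\rW_\xi}^\rW$, and Proposition~\ref{key prop}) is genuinely $\rW$-equivariant and not merely an abstract isomorphism of the underlying $D$-modules. In particular I would need that the isomorphism in part (4) of Proposition~\ref{properties of ind} is functorial/natural in $\mF$ (so that it respects the $\rW$-equivariant structure on $\Psi_{\ul,c}$), and that the Mellin-transform description of the $\rW$-action on $\Psi_{\ul,c}*\mE_\xi$ underlying Proposition~\ref{key prop} matches the one on $\mE_\xi$ — but the latter is exactly what Proposition~\ref{key prop} asserts as an isomorphism of $\rW_\xi$-equivariant objects. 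A secondary technical point is the interaction between $\rW$-invariants and convolution (exactness of the projector $\cM\mapsto\cM^\rW$, which holds since $|\rW|$ is invertible), but this is routine. Modulo these bookkeeping issues, the proof is a formal consequence of Theorem~\ref{Key} and Proposition~\ref{key prop}.
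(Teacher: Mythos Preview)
Your proposal is correct and follows essentially the same route as the paper: apply Theorem~\ref{Key} to see $\on{Av}_U(\cM_\theta)\is\mE_\theta$ is supported on $T$, use part~(4) of Proposition~\ref{properties of ind} to pull the convolution inside $\Ind_{T\subset B}^G$, invoke Proposition~\ref{key prop} (together with $\mE_\theta\is\Ind_{\rW_\xi}^\rW\mE_\xi$) to identify $\Psi_{\ul,c}*\mE_\theta\is\mE_\theta$ equivariantly, and then pass to $\rW$-invariants using \eqref{M_theta}. The paper treats the equivariance bookkeeping you flag more tersely, but the logical skeleton is identical.
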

\begin{proof}
Fix a lifting of $\xi\in\breve T$ of $\theta$. Then we have 
 $\on{Av}_U(\mM_\theta)\is\mE_\theta\is\Ind_{\rW_\xi}^\rW\mE_\xi$, which is supported on $T=B/U$. Thus
by Proposition \ref{properties of ind} and Proposition \ref{key prop} we have 
\[
\Ind_{T\subset B}^G(\Psi_{\ul,c})*\mM_\theta\is\Ind_{T\subset B}^G(\Psi_{\ul,c}*\mE_{\theta})
\is\Ind^\rW_{\rW_\xi}(\Ind_{T\subset B}^G(\Psi_{\ul,c}*\mE_\xi))\is
\Ind^\rW_{\rW_\xi}(\Ind_{T\subset B}^G(\mE_\xi))
.\]
Now taking $\rW$-invariant on both sides of the isomorphism above 
and using (\ref{M_theta}),
we arrive 
\[\Psi_{G,\ul,c}*\mM_\theta\is
(\Ind_{T\subset B}^G(\Psi_{\ul,c})*\mM_\theta)^\rW\is
(\Ind_{T\subset B}^G(\Psi_{\ul,c}*\mE_\xi))^{\rW_\xi}
\is\Ind_{T\subset B}^G(\mE_\xi)^{\rW_\xi}\is\mM_\theta.\]

\end{proof}

\subsection{Proof of Proposition \ref{key prop}}\label{proof of key prop}
\quash{It suffices to construct an isomorphism 
$\mathfrak M(\Psi_\ul*\mE_\xi)\is\mathfrak M(\mE_\xi)$ 
which is compatible with the $\rW_\xi$-actions coming from the 
$\rW_\xi$-equivariant structures on $\Psi_{\ul,c}*\mE_\xi$ and $\mE_\xi$ (see section \ref{}).}
We shall construct an isomorphism 
$\Psi_{\ul,c}*\mE_\xi\is\mE_\xi$. 
For this we will first 
construct an isomorphism
$\Psi(\lambda,c)*\mE_\xi\is\mE_\xi$ for $\lambda\in\breve T$. 
For simplicity, we will write 
$\Psi_\ul$ (resp. $\Psi(\lambda)$) for $\Psi_{\ul,c}$ (resp. $\Psi(\lambda,c)$).
By \S\ref{MT} and \S\ref{MT of E_xi}, we have 
$\mathfrak M(\mE_\xi)=\bigoplus_{\mu\in\ft^*,[\mu]=\xi} S_\xi^\mu$ 
and  
\beq\label{Eq1}
\mathfrak M(\Psi(\lambda)*\mE_\xi)\is\mathfrak M(\Psi(\lambda))\otimes_S\mathfrak M(\mE_\xi
)\is\bigoplus_{\mu\in\ft^*,[\mu]=\xi}\bC[x^{\pm 1}]e^{cx}\otimes_{\bC[v]}S_\xi^\mu,
\eeq
here $x$ is a coordinate of $\bG_m$, $v=x\partial_x$, and $\bC[v]$ acts on
$S_\xi^\mu$ via the map 
$d\lambda:\bC[v]\ra S$.
Write $\lambda(\mu)=a_{\lambda,\mu}+n_{\lambda,\mu}$, with $a_{\lambda,\mu}\in[0,1)$
, $n_{\lambda,\mu}\in\bZ$, 
and consider the free $\bC[v]$-submodule 
\[E_{\lambda,\mu}:=\bC[v]\cdot x^{n_{\lambda,\mu}}e^{cx}\subset\bC[x^{\pm1}]e^{cx}\]
generated by $x^{n_{\lambda,\mu}}e^{cx}$.
From the relation $v\cdot x^ne^{cx}=(nx^n+cx^{n+1})e^{cx}$,
we deduce that 
$\bC[x^{\pm1}]e^{cx}/E_{\lambda,\mu}$, as a quasi-coherent sheaf 
on $\Spec{\bC[v]}\is\bC$,
is supported away from 
$\lambda(\mu)$. Since $S_\xi^\mu$ is supported on $\lambda(\mu)$, we deduce 
that $(\bC[x^{\pm1}]e^{cx}/E_{\lambda,\mu})\otimes_{\bC[v]}S_\xi^\mu=0$ and 
\beq\label{Eq2}
\bC[x^{\pm1}]e^{cx}\otimes_{\bC[v]}S_\xi^\mu\is E_{\lambda,\mu}\otimes_{\bC[v]}S_\xi^\mu
\is S_\xi^\mu.
\eeq
Combining (\ref{Eq1}) and (\ref{Eq2}) we get 
\[
\mathfrak M(\Psi(\lambda)*\mE_\xi)\is
\bigoplus_{\mu\in\ft^*, [\mu]=\xi}E_{\lambda,\mu}\otimes_{\bC[v]}S_\xi^\mu\is
\bigoplus_{\mu\in\ft^*, [\mu]=\xi} S_\xi^\mu\is\mathfrak M(\mE_\xi)
\]
and this gives 
\beq\label{Psi and E_xi}
\Psi(\lambda)*\mE_\xi\is\mE_\xi.
\eeq
The isomorphism above defines an isomorphism
\beq\label{kappa}
\kappa:\Psi_\ul*\mE_\xi\is\Psi(\lambda_1)*\cdot\cdot\cdot*\Psi(\lambda_r)*\mE_\xi\is\mE_\xi.
\eeq

We shall show that $\kappa$ is compatible with the $\rW_\xi$-equivariant structures on both sides. 
According to (\ref{W'-action}),
it suffices to show that the map 
$\mathfrak M(\kappa):\mathfrak M(\Psi_\ul*\mE_\xi)\is\mathfrak M(\mE_\xi)$
is compatible with the $\rW_\xi$-actions on both sides. Denote $x_{\ul,\mu}:=\prod_{i=1}^rx_i^{n_{\lambda_i,\mu}}$ and 
consider the 
free $\bC[v_1,...,v_r]$-submodule 
\[E_{\ul,\mu}:=\bC[v_1,...,v_r]\cdot x_{\ul,\mu}e^{\sum cx_i}\subset
\bC[x_1^{\pm 1},...,x_r^{\pm1}]e^{\sum cx_i}\]
generated by $x_{\ul,\mu}e^{\sum cx_i}$.
It follows from (\ref{Eq2}) 
that 
\[
\mathfrak M(\Psi_{\ul}*\mE_\xi)\is
\bigoplus_{\mu\in\ft^*,[\mu]=\xi}
\bC[x_1^{\pm 1},...,x_r^{\pm1}]e^{\sum cx_i}
\otimes_{\bC[v_1,...,v_r]}S_\xi^\mu\is
\bigoplus_{\mu\in\ft^*,[\mu]=\xi}
E_{\ul,\mu}\otimes_{\bC[v_1,...,v_r]}S_\xi^\mu
.\]
Moreover, under the isomorphism above,
the map $\mathfrak M(\kappa)$
becomes 
\beq\label{Eq4}
\bigoplus_{\mu\in\ft^*,[\mu]=\xi}
E_{\ul,\mu}\otimes_{\bC[v_1,...,v_r]}S_\xi^\mu\is
\bigoplus_{\mu\in\ft^*,[\mu]=\xi}
S_\xi^\mu,\ \ x_{\ul,\mu}\otimes s\ra s.
\eeq

We now describe the $\rW_\xi$-action  
on $\mathfrak M(\Psi_{\ul}*\mE_\xi)$.
Let $w'=(w,\eta)\in\rW'$ with $w\in\rW_\xi$. Consider the map
\[a_\eta:\bC[x_1^{\pm 1},...,x_r^{\pm1}]e^{\sum cx_i}\ra
\bC[x_1^{\pm 1},...,x_r^{\pm1}]e^{\sum cx_i},\  fe^{\sum cx_i}\ra\eta(f)
e^{\sum cx_i}.\]
Since 
\[a_\eta(x_{\ul,\mu}e^{\sum cx_i})=(\prod_{i=1}^r x_{\eta(i)}^{n_{\lambda_i,\mu}})e^{\sum cx_i}=
(\prod_{i=1}^rx_i^{n_{w^{-1}(\lambda),\mu}})e^{\sum cx_i}=
(\prod_{i=1}^rx_i^{n_{\lambda,w(\mu)}})e^{\sum cx_i}=x_{\ul,w(\mu)}e^{\sum cx_i}\]
the map $a_\eta$ restricts to a map    
$a_\eta:E_{\ul,\mu}\ra E_{\ul,w(\mu)}\subset\bC[x_1^{\pm 1},...,x_r^{\pm1}]e^{\sum cx_i}$. 
Consider the $\rW_\xi$-action on $\mathfrak M(\Psi_\ul*\mE_\xi)$:
$a_w:\mathfrak M(\Psi_\ul*\mE_\xi)\ra\mathfrak M(\Psi_\ul*\mE_\xi),\ w\in\rW_\xi$. It follows from the 
description for the $\rW_\xi$-action on $\mathfrak M(\Psi_\ul)$ in (\ref{description of i_w}) 
that we have the following commutative diagram 
\beq\label{Eq5}
\xymatrix{\bigoplus_{\mu\in\ft^*,[\mu]=\xi}E_{\ul,\mu}\otimes_{\bC[v_1,...,v_r]}S_\xi^\mu\ar[d]
^{\oplus_{\mu}a_\eta\otimes u^\mu_w}
\ar[r]&\mathfrak M(\Psi_\ul*\mE_\xi)\ar[d]^{a_w}
\\
\bigoplus_{\mu\in\ft^*,[\mu]=\xi}E_{\ul,w(\mu)}\otimes_{\bC[v_1,...,v_r]}S_\xi^{w(\mu)}\ar[r]&\mathfrak M(\Psi_\ul*\mE_\xi)}
\eeq
where
\beq\label{Eq6}
u_w=\oplus u_w^\mu:\mathfrak M(\mE_\xi)=\bigoplus_{\mu\in\ft^*,[\mu]=\xi}S_\xi^\mu\ra 
\bigoplus_{\mu\in\ft^*,[\mu]=\xi}S_\xi^{w(\mu)}=\mathfrak M(\mE_\xi),\ \ w\in\rW_\xi
\eeq 
is map in
(\ref{u_w}) describing the $\rW_\xi$-action on $\mathfrak M(\mE_\xi)$.
Since the map in (\ref{Eq4}) satisfies the following commutative diagram 
\[
\xymatrix{
\bigoplus_{\mu\in\ft^*,[\mu]=\xi} E_{\ul,\mu}\otimes_{\bC[v_1,...,v_r]}S_\xi^\mu\ar[d]^{\oplus a_\eta\otimes u^\mu_w}\ar[r]^{\ }&\bigoplus_{\mu\in\ft^*,[\mu]=\xi} S_{\xi}^\mu
\ar[d]^{\oplus u^\mu_w}
\\
\bigoplus_{\mu\in\ft^*,[\mu]=\xi} E_{\ul,w(\mu)}\otimes_{\bC[v_1,...,v_r]}S_\xi^{w(\mu)}\ar[r]^{
}&\bigoplus_{\mu\in\ft^*,[\mu]=\xi} S_\xi^{w(\mu)}},
\]
we deduce from (\ref{Eq5}) and (\ref{Eq6}) that 
$\mathfrak M(\kappa):\mathfrak M(\Psi_\ul*\mE_\xi)\is\mathfrak M(\mE_\xi)$ is compatible with the 
$\rW_\xi$-action on both sides.
This finishes the proof of the proposition.

\subsection{}
Recall the local systems 
$\mL_\xi^n$ in \S\ref{central Loc}. We have 
$\mathfrak M(\mL_{\xi}^n)=\bigoplus_{\mu\in\breve\ft,[\mu]=\xi} S_{n}^{\mu}$ 
(see  \S\ref{MT of E_xi}).
Using the relation $v\cdot x^ne^{cx}=(nx^n+cx^{n+1})e^{cx}$ and the fact that 
$S_{n}^{\mu}$, viewing as $\bC[v]$-module via $d\lambda:\bC[v]\ra S$, is supported on 
$\lambda(\mu)$, the same argument as in the proof of (\ref{kappa}) gives: 
\begin{lemma}\label{Psi_c}
There exists a projective system of isomorphisms 
\[\Psi_{\ul,c}*\mL^n_\xi\is\mL_\xi^n.\]
\end{lemma}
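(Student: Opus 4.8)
The plan is to repeat the Mellin-transform computation from the proof of Proposition \ref{key prop}, with the module $S_\xi^\mu$ replaced throughout by $S_n^\mu$, and then to upgrade the resulting isomorphisms to a morphism of projective systems. The only property of $S_\xi^\mu$ used in that argument is that, regarded as a $\bC[v]$-module via $d\lambda\colon\bC[v]\to S$, it is finite-dimensional and set-theoretically supported at the single point $\lambda(\mu)\in\Spec\bC[v]\is\bC$; by \S\ref{MT of E_xi} the module $S_n^\mu=\Gamma(\cS_n^\mu)$ has exactly the same property, so the argument carries over.

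Concretely, I would first handle a single cocharacter $\lambda$. By \S\ref{MT of E_xi} one has $\mathfrak M(\mL_\xi^n)\is\bigoplus_{\mu\in\breve\ft,\,[\mu]=\xi}S_n^\mu$, and since $\mathfrak M$ is monoidal with $\mathfrak M(\Psi(\lambda,c))\is\bC[x^{\pm1}]e^{cx}$ ($\bC[v]$ acting via $d\lambda$),
\[
\mathfrak M(\Psi(\lambda,c)*\mL_\xi^n)\is\bigoplus_{\mu\in\breve\ft,\,[\mu]=\xi}\bC[x^{\pm1}]e^{cx}\otimes_{\bC[v]}S_n^\mu .
\]
Writing $\lambda(\mu)=a_{\lambda,\mu}+n_{\lambda,\mu}$ with $a_{\lambda,\mu}\in[0,1)$ and $n_{\lambda,\mu}\in\bZ$, the relation $v\cdot x^ke^{cx}=(kx^k+cx^{k+1})e^{cx}$ shows that the quotient of $\bC[x^{\pm1}]e^{cx}$ by the free rank-one $\bC[v]$-submodule $E_{\lambda,\mu}=\bC[v]\cdot x^{n_{\lambda,\mu}}e^{cx}$ is supported away from $\lambda(\mu)$; since $\bC[v]$ is a PID and $S_n^\mu$ is supported at $\lambda(\mu)$, this quotient vanishes after $\otimes_{\bC[v]}S_n^\mu$ and has no higher $\on{Tor}$ against it, so $\bC[x^{\pm1}]e^{cx}\otimes_{\bC[v]}S_n^\mu\is E_{\lambda,\mu}\otimes_{\bC[v]}S_n^\mu\is S_n^\mu$, the last identification using the distinguished generator $x^{n_{\lambda,\mu}}e^{cx}$. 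Iterating over $\lambda_1,\dots,\lambda_r$ then gives $\mathfrak M(\Psi_{\ul,c}*\mL_\xi^n)\is\bigoplus_\mu S_n^\mu\is\mathfrak M(\mL_\xi^n)$, hence $\Psi_{\ul,c}*\mL_\xi^n\is\mL_\xi^n$, exactly as in the construction of (\ref{kappa}).

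It then remains to make these isomorphisms compatible with the transition maps $\mL_\xi^{n+1}\to\mL_\xi^n$ of the projective system, which under $\mathfrak M$ are the natural surjections $S_{n+1}^\mu\twoheadrightarrow S_n^\mu$ induced by $S/S_+^{n+1}\to S/S_+^n$ in each summand. The point is that $n_{\lambda,\mu}$, and hence the submodule $E_{\lambda,\mu}$, depends only on $\lambda$ and $\mu$ and not on $n$; consequently the single-$\lambda$ isomorphism $\bC[x^{\pm1}]e^{cx}\otimes_{\bC[v]}(-)\is(-)$ is natural in the $\bC[v]$-module argument, and iterating over $\lambda_1,\dots,\lambda_r$ shows that $\Psi_{\ul,c}*\mL_\xi^n\is\mL_\xi^n$ is natural in $\mL_\xi^n$. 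In particular it intertwines the transition maps, so the collection $\{\Psi_{\ul,c}*\mL_\xi^n\is\mL_\xi^n\}_n$ forms a morphism of projective systems, as required.

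I do not expect a real obstacle here: the argument is a verbatim rerun of the one for (\ref{kappa}) once one observes that $S_n^\mu$ has the same support behaviour as $S_\xi^\mu$, while the projective-system statement follows from the $n$-independence of the submodules $E_{\lambda,\mu}$. The only thing requiring slight care is, as in \S\ref{MT of E_xi}, keeping straight the two module structures on $S_n^\mu$ — over $S=\mO(\breve\ft)$ versus over $\bC[v]$ via $d\lambda$ — when moving between $\breve\ft$ and $\Spec\bC[v]$.
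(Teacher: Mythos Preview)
Your proposal is correct and follows exactly the approach the paper takes: the paper's proof is the one-line observation that, since $\mathfrak M(\mL_\xi^n)\is\bigoplus_{\mu}S_n^\mu$ and each $S_n^\mu$ is supported at $\lambda(\mu)$ as a $\bC[v]$-module, the same argument as in the proof of (\ref{kappa}) applies verbatim. Your write-up is in fact more careful than the paper's, in that you explicitly verify compatibility with the transition maps via the $n$-independence of $E_{\lambda,\mu}$, a point the paper leaves implicit.
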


\quash{
and the $u_w$ is the map in (\ref{}) coming from the 
$\rW_\xi$-equivariant structure on $\mE_\xi$.
On the other hand, it follows from the description in (\ref{}) that 
$\eta\otimes w:E_{\ul,\mu}\otimes_{\bC[v_1,...,v_r]}S_\xi^\mu
\ra E_{\ul,w(\mu)}\otimes_{\bC[v_1,...,v_r]}S_\xi^{w(\mu)}$
is equal to the 
$\Gamma(\Psi_\ul*\mE_\xi)\is\Gamma(\mE_\xi)$, compatible with 
the $\rW_\xi$-acitons on both sides coming from the $\rW_\xi$-equivaraint structures.
To this end, we observe that 
$\Gamma(\mE_\xi)=\bigoplus_{\mu\in\ft^*,[\mu]=\xi} S_\xi^\mu$ and 
\[\Gamma(\Psi_\ul*\mE_\xi)\is\Gamma(\Psi_\ul)\otimes_S\Gamma(\mE_\xi
)\is\bigoplus_{\mu\in\ft^*,[\mu]=\xi} \Gamma(\Psi_\ul)\otimes_SS_\xi^\mu.\]
}

\section{Kazhdan-Braverman conjecture}\label{KB conj}
In \cite[Conjecture 9.2]{BK} A.Braverman and D.Kazhdan 
conjectured the following vanishing property of 
gamma $D$-module:
\begin{conjecture}
$\on{Av}_{U!}(\Psi_{G,\ul,c})$ is supported on $T=B/U\subset G/U$.
Here 
$\on{Av}_{U!}:D_G(G)_{hol}\ra D_B(G/U)_{hol}$ is the 
shriek averaging functor in (\ref{Av!}).
\end{conjecture}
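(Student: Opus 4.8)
The plan is to deduce the conjecture from Theorem~\ref{main thm}, the analogous acyclicity statement for the \emph{star}-averaging functor $\on{Av}_U=(\pi_U)_*\circ\on{oblv}_B^G$, by Verdier duality together with the duality formula~(\ref{dual of Psi}) for gamma $D$-modules.

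First I would record the interaction of $\mathbb D$ with the functors defining $\on{Av}_{U!}$. Verdier duality commutes with the forgetful functor, $\mathbb D\circ\on{oblv}_B^G\is\on{oblv}_B^G\circ\mathbb D$, and on holonomic derived categories one has $(\pi_U)_!\is\mathbb D\circ(\pi_U)_*\circ\mathbb D$; hence there is a canonical isomorphism of functors $D_G(G)_{hol}\ra D_B(G/U)_{hol}$,
\[
\on{Av}_{U!}\is\mathbb D\circ\on{Av}_U\circ\mathbb D .
\]
Applying this to $\Psi_{G,\ul,c}$ and using $\mathbb D(\Psi_{G,\ul,c})\is\Psi_{G,\ul,-c}$ from~(\ref{dual of Psi}) gives
\[
\on{Av}_{U!}(\Psi_{G,\ul,c})\is\mathbb D\bigl(\on{Av}_U(\Psi_{G,\ul,-c})\bigr).
\]
Now apply Theorem~\ref{main thm} to the collection $\ul$ with multiplier $-c$ in place of $c$: the object $\on{Av}_U(\Psi_{G,\ul,-c})$ is supported on $T=B/U\subset G/U$. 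Since $T=B/U$ is closed in $G/U$ and Verdier duality commutes with restriction to the open complement of $T$, it preserves the property of being supported on $T$; therefore $\on{Av}_{U!}(\Psi_{G,\ul,c})\is\mathbb D\bigl(\on{Av}_U(\Psi_{G,\ul,-c})\bigr)$ is supported on $T$, which is the assertion of the conjecture.

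The entire weight of the argument thus rests on Theorem~\ref{main thm}, which I would organize as follows. Assuming $\pr_\ul$ onto, $\Psi_{G,\ul,c}$ is a direct summand of $\Ind_{T\subset B}^G(\Psi_{\ul,c})$, so it suffices to show that $\on{Av}_U(\Ind_{T\subset B}^G(\Psi_{\ul,c}))$ is supported on $T$; over $G_{rs}$ this is immediate, since $\Psi_{G,\ul,c}|_{G_{rs}}$ is pulled back from $T_{rs}/\rW$, so the difficulty is concentrated over $G\setminus G_{rs}$, where $\Psi_{\ul,c}$ is wildly ramified and the classical regular-holonomic vanishing arguments fail. The device for getting around this is to use at once that convolution with $\Psi_{G,\ul,c}$ fixes every character $D$-module $\cM_\theta$ (Theorem~\ref{conv with M_theta}: $\Psi_{G,\ul,c}*\cM_\theta\is\cM_\theta$) and that each $\cM_\theta$ is itself acyclic, $\on{Av}_U(\cM_\theta)\is\cE_\theta$ (Theorem~\ref{Key}); feeding these into the fact that the identity functor is a direct summand of $\mathrm{CH}\circ\mathrm{HC}\is\on{Av}_G\circ\on{Av}_U[-]$ (Lemma~\ref{CHHC}) together with the vanishing lemmas of \S\ref{vanishing lemmas}, one transfers acyclicity from the regular-singular family $\{\cM_\theta\}_{\theta\in\breve T/\rW}$ to $\Psi_{G,\ul,c}$ itself. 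I expect precisely this last transfer --- propagating acyclicity from the $\cM_\theta$ to the wildly ramified $\Psi_{G,\ul,c}$, i.e. pinning down $\on{Av}_U(\Psi_{G,\ul,c})$ over $G\setminus G_{rs}$ --- to be the main obstacle.
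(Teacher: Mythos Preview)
Your reduction is exactly the paper's: immediately after stating the conjecture the paper observes $\mathbb D(\on{Av}_{U!}(\Psi_{G,\ul,c}))\is\on{Av}_U(\Psi_{G,\ul,-c})$ via (\ref{dual of Psi}) and declares the conjecture equivalent to Theorem~\ref{main thm}. Your sketch of Theorem~\ref{main thm} also invokes the same three inputs (Theorem~\ref{Key}, Theorem~\ref{conv with M_theta}, and the vanishing lemmas of \S\ref{vanishing lemmas}); the only minor divergence is that the paper does not route through Lemma~\ref{CHHC} but instead shows directly that the cone of the natural map $\on{Res}_{T\subset B}^G(\Psi_{G,\ul,c})\to\on{Av}_U(\Psi_{G,\ul,c})$ is killed by $(-)*\mE_\xi$ for every $\xi$, then applies Lemmas~\ref{vanishing 2} and~\ref{vanishing 1}.
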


Since $\mathbb D(\on{Av}_{U!}(\Psi_{G,\ul,c}))\is\on{Av}_{U}(\mathbb D(\Psi_{G,\ul,c}))\is
\on{Av}_{U}(\Psi_{G,\ul,-c})$ (see (\ref{dual of Psi})), the 
conjecture above is equivalent to the following: 
\begin{thm}\label{main thm}
$\on{Av}_U(\Psi_{G,\ul,c})\in D_B(G/U)$
is supported on $T=B/U\subset G/U$.
\end{thm}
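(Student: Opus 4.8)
The plan is to detect the support of $\on{Av}_U(\Psi_{G,\ul,c})$ by convolving it against the whole family of character $D$-modules $\{\cM_\theta\}_{\theta\in\breve T/\rW}$, whose acyclicity is already known (Theorem \ref{Key}). Concretely, I would apply $\on{Av}_U$ to the isomorphism $\Psi_{G,\ul,c}*\cM_\theta\is\cM_\theta$ of Theorem \ref{conv with M_theta}. The functor $\on{Av}_U=i^0\circ\on{HC}$ is compatible with convolution through a projection formula $\on{Av}_U(\cA*\cB)\is\on{Av}_U(\cA)\star\on{Av}_U(\cB)$, valid when $\on{Av}_U(\cB)$ is supported on $T=B/U$, where $\star$ denotes the right convolution action of $D_U(X)$ on $D(X)$ restricted to objects supported on $T$; this is a form of the statement that $\on{HC}$ is a functor of module categories over the Hecke category $D_U(X)\is D_G(Y)$ (the ``centrality'' identity $\on{HC}(\cM)*\cF\is\cF*\on{HC}(\cM)$). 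Since $\on{Av}_U(\cM_\theta)\is\mE_\theta$ is supported on $T$, this produces, for every $\theta\in\breve T/\rW$, an isomorphism $\on{Av}_U(\Psi_{G,\ul,c})\star\mE_\theta\is\mE_\theta$ in $D_B(X)$, whose right-hand side is supported on $T$.

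\textbf{Reading off the support.} The complex $\on{Av}_U(\Psi_{G,\ul,c})$ is holonomic and $T$-monodromic, so it decomposes as $\bigoplus_{[\xi]}K_{[\xi]}$ according to the $\rW$-orbits of its finitely many generalized monodromies. Convolution with $\mE_\theta$ kills all summands whose monodromy orbit is not $[\xi]=\theta$ (orthogonality of convolution of Kummer local systems and their unipotent thickenings on $T$), so the isomorphism above reduces to $K_{[\xi]}\star\mE_\theta\is\mE_\theta$. Now two facts combine. First, the right $T$-convolution $\star$ is compatible with the decomposition $X=(B/U)\sqcup(X\smallsetminus B/U)$ into $T$-stable loci, so $K_{[\xi]}$ is supported on $T$ as soon as $K_{[\xi]}|_{X\smallsetminus B/U}$ is annihilated by $\star\mE_\theta$. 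Second, $\mE_\theta$ is the image, under the equivalence of \cite{BFO}, of the Soergel/central object $\cZ_\lambda\in Z(\mathcal{HC}_{\hat\lambda},\otimes)$ of Proposition \ref{E=Z}, and convolution with $\cZ_\lambda$ --- being essentially $\otimes_S S_\xi^\lambda$, i.e.\ restriction of the central parameter to the fat $\rW_\xi$-orbit of $\lambda$ --- is conservative on finitely generated (holonomic) objects by Nakayama's lemma. Together these force $K_{[\xi]}|_{X\smallsetminus B/U}=0$, and ranging over the finitely many $\theta$ occurring in $\on{Av}_U(\Psi_{G,\ul,c})$ gives Theorem \ref{main thm}. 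The exactness of the twisted Harish-Chandra functor \cite[Corollary 3.4]{BFO} and the vanishing lemmas of \S\ref{vanishing lemmas} are what make the projection formula and the block decomposition legitimate at the derived level.

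\textbf{Main obstacle.} The hard part will be the projection formula of the first step together with the conservativity statement of the second: one must pass from $\Psi_{G,\ul,c}*\cM_\theta\is\cM_\theta$ back to information about $\Psi_{G,\ul,c}$ alone, and $\cM_\theta$ --- hence $\mE_\theta$ --- is ``small'', its monodromy being controlled by the $\rW_\xi$-coinvariant algebra, whereas the monodromy of $\Psi_{\ul,c}$ can have unipotent Jordan blocks of arbitrary size within the given rank. So $\Psi_{\ul,c}$ is not a subquotient of sums of the $\mE_\xi$, and one genuinely needs the explicit Mellin-transform computations of \S\ref{MT of E_xi} and \S\ref{proof of key prop} --- the Mellin transform of $\Psi(\lambda,c)$ being a free rank-one module, so that convolution with it is the identity ``near $\xi$'' in the completed sense of Lemma \ref{Psi_c} --- to control the relevant completions; combining this with Lemma \ref{CHHC}(3) (the identity functor is a direct summand of $(-)*Sp\is\mathrm{CH}\circ\mathrm{HC}$) allows one, if necessary, to replace $\Psi_{G,\ul,c}$ by $\Psi_{G,\ul,c}*Sp$, which is more directly accessible through the $\cM_\theta$. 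Carrying the $\rW_\xi$-equivariant structures and generalized central characters correctly through all of these steps is where the bulk of the technical work lies.
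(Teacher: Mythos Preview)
Your first step is essentially the paper's: from Theorem \ref{conv with M_theta} and Theorem \ref{Key} one gets $\on{Av}_U(\Psi_{G,\ul,c})*\mE_\theta\is\mE_\theta$ for every $\theta$, and in particular this convolution is supported on $T$. Where your argument departs from the paper's, and where it breaks, is in how you extract the support statement from this.

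You assert that $\on{Av}_U(\Psi_{G,\ul,c})$ is $T$-monodromic and decomposes into finitely many blocks $K_{[\xi]}$ by generalized monodromy. This is false: $\Psi_{\ul,c}$ is an \emph{irregular} local system on $T$ built out of exponentials, and its Mellin transform (\S\ref{MT}) is a free $S$-module, so $\on{Sym}(\ft)$ does not act locally finitely on its sections. Consequently $\Psi_{G,\ul,c}$ is not admissible, $\on{Av}_U(\Psi_{G,\ul,c})$ is not $T$-monodromic, and there is no block decomposition to speak of. Your conservativity-via-Nakayama argument, and the reduction to ``finitely many $\theta$ occurring,'' both rest on this nonexistent decomposition. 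The remarks in your ``Main obstacle'' paragraph about Lemma \ref{Psi_c} and $(-)*Sp$ do not repair this, since those tools again presuppose working inside a fixed monodromic block.

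The paper's argument avoids monodromicity altogether. Instead of decomposing $\on{Av}_U(\Psi_{G,\ul,c})$, it considers the cone of the tautological map $r:\on{Res}_{T\subset B}^G(\Psi_{G,\ul,c})\to\on{Av}_U(\Psi_{G,\ul,c})$ and shows that $\on{cone}(r)*\mE_\xi=0$ for \emph{every} $\xi\in\breve T$ (this is exactly what your isomorphism $\on{Av}_U(\Psi_{G,\ul,c})*\mE_\theta\is\mE_\theta$ gives, once you observe the same identity holds with $\on{Av}_U$ replaced by $\on{Res}$). Then Lemma \ref{vanishing 2} upgrades this to $\on{cone}(r)*\mL_\xi=0$ for every Kummer local system $\mL_\xi$, and Lemma \ref{vanishing 1} (which holds for an arbitrary holonomic complex on a variety with free $T$-action, with no monodromicity hypothesis) forces $\on{cone}(r)=0$. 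So the vanishing lemmas are not, as you write, what ``make the block decomposition legitimate''; they are what \emph{replace} it.
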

\begin{proof}
It suffices to show that the natural map $r:\on{Res}_{T\subset B}^G(\Psi_{G,\ul,c})\ra\on{Av}_U(\Psi_{G,\ul,c})$
is an isomorphism. 
We claim that the convolution 
\beq\label{convolution}
\on{Res}_{T\subset B}^G(\Psi_{G,\ul,c})*\mE_\theta\ra\on{Av}_U(\Psi_{G,\ul,c})*\mE_\theta
\eeq
of $r$ with $\mE_\theta$ is an isomorphism for all $\theta\in\breve T/\rW$.
For this, it is enough to show that 
$\on{Av}_U(\Psi_{G,\ul,c})*\mE_\theta$ is supported on $T$ and this follows from 
Theorem \ref{Key} and Theorem \ref{conv with M_theta}. Indeed, we have  
$\on{Av}_U(\Psi_{G,\ul,c})*\mE_\theta\stackrel{\on{Thm \ref{Key}}}\is\on{Av}_U(\Psi_{G,\ul,c})*\on{Av}_U(\cM_\theta)\is
\on{Av}_U(\Psi_{G,\ul,c}*\mM_\theta)\stackrel{\on{Thm} \ref{conv with M_theta}}\is\on{Av}_U(\mM_\theta)\stackrel{\on{Thm \ref{Key}}}\is\mE_\theta$.
Note
$\mE_\theta\is\Ind_{\rW_\xi}^\rW\mE_\xi$ and 
(\ref{convolution}) implies that $\on{cone}(r)$,
the cone  of 
$r$, satisfies 
$\on{cone}(r)*\mE_{\xi}=0$ for all $\xi\in\breve T$. 
Since $\mE_{\xi}$ is a local system on $T$ with generalized monodromy $\xi\in\breve T$,
Lemma \ref{vanishing 2} and Lemma \ref{vanishing 1} below imply $\on{cone}(r)=0$. The theorem follows.
\end{proof}

\begin{corollary}\label{Av of Psi}
We have $\on{Av}_U(\Psi_{G,\ul,c})\is\Psi_{\ul,c}$.
\end{corollary}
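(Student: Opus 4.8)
The plan is to read the corollary off from Theorem \ref{main thm} together with Proposition \ref{CS}. The proof of Theorem \ref{main thm} actually establishes that the canonical morphism $r\colon\on{Res}_{T\subset B}^G(\Psi_{G,\ul,c})\to\on{Av}_U(\Psi_{G,\ul,c})$ is an isomorphism, so it suffices to identify $\on{Res}_{T\subset B}^G(\Psi_{G,\ul,c})$ with $\Psi_{\ul,c}$.

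First I would check this identification over the regular semisimple locus. Since $\Psi_{G,\ul,c}=\Ind_{T\subset B}^G(\Psi_{\ul,c})^{\rW}$ is a direct summand of $\Ind_{T\subset B}^G(\Psi_{\ul,c})$, it lies in $CS_{[T]}(G)$ (here $\Psi_{\ul,c}\in CS(T)$, being a finitely generated $T$-monodromic $D$-module on $T$). The computation carried out in the proof of Proposition \ref{CS}(4) identifies, for any $\rW$-equivariant local system $\mathcal G$ on $T$, the restriction $\Ind_{T\subset B}^G(\mathcal G)^{\rW}|_{G_{rs}}$ with $c_{rs}^{*}\overline{\mathcal G}$, where $\overline{\mathcal G}\in\on{Loc}(T_{rs}/\rW)$ is the descent of $\mathcal G|_{T_{rs}}$ along $q_{rs}\colon T_{rs}\to T_{rs}/\rW$ and $c_{rs}\colon G_{rs}\to T_{rs}/\rW$ is the restriction of the Chevalley map. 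Applying this with $\mathcal G=\Psi_{\ul,c}$ and restricting further along $T_{rs}\hookrightarrow G_{rs}$, over which $c_{rs}$ becomes $q_{rs}$, yields $\Psi_{G,\ul,c}|_{T_{rs}}\cong\Psi_{\ul,c}|_{T_{rs}}$. On the other hand, the result of \cite{G} invoked in the proof of Proposition \ref{CS}(4) gives $\on{Res}_{T\subset B}^G(\Psi_{G,\ul,c})|_{T_{rs}}\cong\Psi_{G,\ul,c}|_{T_{rs}}$. Combining the two, $\on{Res}_{T\subset B}^G(\Psi_{G,\ul,c})|_{T_{rs}}\cong\Psi_{\ul,c}|_{T_{rs}}$.

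Finally I would upgrade this to an isomorphism over all of $T$. By Proposition \ref{CS}(3) and (4) the object $\on{Res}_{T\subset B}^G(\Psi_{G,\ul,c})$ is a local system on $T$, and $\Psi_{\ul,c}$ is a local system on its support (the subtorus $T_{\ul}$, equal to $T$ when $\pr_{\ul}$ is onto). A local system on a smooth variety has no nonzero sub- or quotient $D$-module supported on a proper closed subset, hence it coincides with the intermediate extension of its restriction to any dense open subset; applying this with the dense open $T_{rs}\subset T$ (resp.\ $T_{rs}\cap T_{\ul}\subset T_{\ul}$) and the previous paragraph gives $\on{Res}_{T\subset B}^G(\Psi_{G,\ul,c})\cong\Psi_{\ul,c}$. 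Together with the isomorphism $r$ this yields $\on{Av}_U(\Psi_{G,\ul,c})\cong\Psi_{\ul,c}$.

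I do not expect any genuine obstacle here: the corollary is a bookkeeping consequence of Theorem \ref{main thm} and Proposition \ref{CS}, and the only step meriting attention is the passage from the regular semisimple locus to all of $T$, which is legitimate precisely because $\on{Res}_{T\subset B}^G(\Psi_{G,\ul,c})$ is known to be an honest local system. Alternatively, one could bypass the $G_{rs}$-argument by invoking the Mackey-type identity $\on{Res}_{T\subset B}^G\circ\Ind_{T\subset B}^G(\mathcal G)\cong\bigoplus_{w\in\rW}w^{*}\mathcal G$ and taking $\rW$-invariants, at the cost of having to track the resulting $\rW$-action against the $\rW$-equivariant structure on $\Psi_{\ul,c}$.
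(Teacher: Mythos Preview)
Your argument is correct and follows the same two-step skeleton as the paper: first use Theorem \ref{main thm} (more precisely, the isomorphism $r$ established in its proof) to reduce to computing $\Res_{T\subset B}^G(\Psi_{G,\ul,c})$, and then identify the latter with $\Psi_{\ul,c}$.

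The only difference is in the second step. The paper simply cites \cite[Theorem 6.6]{BK1} for the identification $\Res_{T\subset B}^G(\Psi_{G,\ul,c})\cong\Psi_{\ul,c}$, whereas you reprove this internally by restricting to $T_{rs}$ (using the description of $\Ind_{T\subset B}^G(\Psi_{\ul,c})^{\rW}|_{G_{rs}}$ and the identification $\Res(\mG)|_{T_{rs}}\cong\mG|_{T_{rs}}$ from the proof of Proposition \ref{CS}(4)) and then extending via intermediate extension, which is legitimate since both sides are local systems on $T$. Your route has the virtue of being self-contained within the paper, at the cost of a paragraph of extra work; the paper's route is a one-line citation. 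Your alternative Mackey-style suggestion at the end is in fact closer in spirit to how \cite{BK1} proves their Theorem 6.6.
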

\begin{proof}
Indeed, by \cite[Theorem 6.6]{BK1} we have 
$\on{Res}_{T\subset B}^G(\Psi_{G,\ul,c})\is\Psi_{\ul,c}$. Thus 
the theorem above implies
$\on{Av}_U(\Psi_{G,\ul,c})\is\on{Res}_{T\subset B}^G(\Psi_{G,\ul,c})\is\Psi_{\ul,c}$.
\end{proof}

\subsection{Vanishing lemmas}\label{vanishing lemmas}
Let $X$ be a smooth variety with a free $T$ action $a:T\times X\ra X$.
For $\mL\in D(T)$ and $\mF\in D(X)$ we define 
$\mL*\mF:=a_*(\mL\boxtimes\mF)\in D(X)$.

\begin{lemma}\label{vanishing 2}
Let $\mL$ be a local system on $T$ with generalized monodromy $\xi\in\breve T$, that is, 
$\mL\otimes\mL_\xi$ is an unipotent local system.  
Let $\mF\in D(X)_{hol}$ and 
assume $\mL*\mF=0$. Then we have $\mL_\xi*\mF=0$.
\end{lemma}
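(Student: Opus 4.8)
The plan is to establish the vanishing by dévissage, reducing everything to the single Kummer local system $\mathcal{L}_\xi$. Since $\mathcal{L}$ has generalized monodromy $\xi$ (equivalently, $\mathcal{L}\otimes\mathcal{L}_\xi^{-1}$ is unipotent), its monodromy representation is $\chi_\xi$ times a unipotent representation of $\pi_1(T)$; the unipotent part is a family of commuting unipotent operators and hence is simultaneously strictly triangularizable, so $\mathcal{L}$ admits a finite filtration $0=\mathcal{L}_0\subset\mathcal{L}_1\subset\cdots\subset\mathcal{L}_m=\mathcal{L}$ by sub-local systems with $\mathcal{L}_j/\mathcal{L}_{j-1}\cong\mathcal{L}_\xi$ for all $j$. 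I will combine this with the observation that $(-)*\mathcal{F}=a_*\bigl((-)\boxtimes\mathcal{F}\bigr)\colon D(T)_{hol}\to D(X)_{hol}$ is a triangulated functor (it is the composition of the exact functor $(-)\boxtimes\mathcal{F}$ with $a_*$).

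Put $\mathcal{G}:=\mathcal{L}_\xi*\mathcal{F}\in D(X)_{hol}$ and assume, for contradiction, that $\mathcal{G}\neq 0$; since $\mathcal{G}$ is bounded there is a least integer $d$ with $\mathcal{H}^d(\mathcal{G})\neq 0$. Applying $(-)*\mathcal{F}$ to the short exact sequences $0\to\mathcal{L}_{j-1}\to\mathcal{L}_j\to\mathcal{L}_\xi\to 0$ produces distinguished triangles
\[
\mathcal{L}_{j-1}*\mathcal{F}\to\mathcal{L}_j*\mathcal{F}\to\mathcal{G}\to\mathcal{L}_{j-1}*\mathcal{F}[1].
\]
I claim that $\mathcal{H}^i(\mathcal{L}_j*\mathcal{F})=0$ for all $i<d$ and that the natural map $\mathcal{H}^d(\mathcal{L}_{j-1}*\mathcal{F})\to\mathcal{H}^d(\mathcal{L}_j*\mathcal{F})$ is injective, for every $j$; in particular $\mathcal{H}^d(\mathcal{L}_j*\mathcal{F})\neq 0$. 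This is proved by induction on $j$: the case $j=1$ holds because $\mathcal{L}_1*\mathcal{F}=\mathcal{G}$ (and $\mathcal{L}_0=0$), and the inductive step follows from the long exact cohomology sequence of the displayed triangle, using $\mathcal{H}^i(\mathcal{G})=0$ for $i<d$ (which, together with the inductive vanishing for $\mathcal{L}_{j-1}*\mathcal{F}$, gives $\mathcal{H}^i(\mathcal{L}_j*\mathcal{F})=0$ for $i<d$) and $\mathcal{H}^{d-1}(\mathcal{G})=0$ (which makes $\mathcal{H}^d(\mathcal{L}_{j-1}*\mathcal{F})\to\mathcal{H}^d(\mathcal{L}_j*\mathcal{F})$ injective).

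Taking $j=m$ gives a nonzero subobject $\mathcal{H}^d(\mathcal{L}_\xi*\mathcal{F})\hookrightarrow\mathcal{H}^d(\mathcal{L}*\mathcal{F})$, so $\mathcal{L}*\mathcal{F}\neq 0$, contradicting the hypothesis. Therefore $\mathcal{G}=\mathcal{L}_\xi*\mathcal{F}=0$, which is the assertion of the lemma.

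The whole argument is formal once the filtration of $\mathcal{L}$ is available. The single point requiring care is the cohomological bookkeeping in the dévissage: it is \emph{not} true in general that an iterated extension of a nonzero object is nonzero (cones can be acyclic), so it is crucial to follow the bottom cohomology sheaf $\mathcal{H}^d$ and the injectivity of the transition maps on it throughout the induction, rather than arguing more crudely.
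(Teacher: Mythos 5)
Your proof is correct and follows essentially the same route as the paper: filter $\mathcal{L}$ with successive quotients $\mathcal{L}_\xi$ and run a cohomological induction along the resulting distinguished triangles. The only (cosmetic) difference is that you track the lowest nonvanishing cohomology sheaf and the injectivity of the transition maps on it, whereas the paper tracks the top nonvanishing degree and derives the contradiction from the shift isomorphism $\mathcal{L}_\xi*\mathcal{F}\cong\mathcal{L}^{(k-1)}*\mathcal{F}[1]$ forced by $\mathcal{L}*\mathcal{F}=0$.
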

\begin{proof}
There is a filtration $0=\mL^{(0)}\subset\mL^{(1)}\subset\cdot\cdot\cdot\subset\mL^{(k)}=\mL$
such that \[0\ra\mL^{(i-1)}\ra\mL^{(i)}\ra\mL^{(i)}/\mL^{(i-1)}\is\mL_\xi\ra 0.\]
Assume $\mL_\xi*\mF\neq 0$ and let $m$ be the smallest number such that $\mathcal H^{\geq m}(\mL_\xi*\mF)=0$.
An induction argument, using above short exact sequence, shows that $\mH^{\geq m}(\mL^{(i)}*\mF)=0$
for $i=1,...,k$. 
Now since $\mL*\mF=0$, the 
distinguished triangle 
\[\mL^{(k-1)}*\mF\ra\mL*\mF\ra\mL_\xi*\mF\ra\mL^{(k-1)}*\mF[1]\]
implies 
\[\mL_\xi*\mF\is\mL^{(k-1)}*\mF[1].\]
Therefore we have $\mH^{m-1}(\mL_\xi*\mF)\is\mH^{m-1}(\mL^{(k-1)}*\mF[1])=\mH^{m}(\mL^{(k-1)}*\mF)=0$
which contradicts to the fact that $m$ is the smallest number such that $\mH^{\geq m}(\mL_\xi*\mF)=0$.
We are done.
\end{proof}

\begin{lemma}\label{vanishing 1}
Let $\mF\in D(X)_{hol}$. If  
$\mL_\xi*\mF=0$ for all $\xi\in\breve T$, then $\mF=0$.
\end{lemma}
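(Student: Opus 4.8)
The plan is to reduce to the case where $X=T$ acts on itself by translation, and to treat that case with the Mellin transform.

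\emph{Reduction to $X=T$.} Let $\mF\in D(X)_{hol}$ satisfy $\mL_\xi*\mF=0$ for all $\xi\in\breve T$. For $x\in X$ its $T$--orbit $O=Tx\subset X$ is locally closed, and since the action is free the orbit map identifies $O$ with $T$ as a $T$--space. As $O$ is $T$--stable we have $a^{-1}(O)=T\times O$, and a cartesian square whose horizontal arrows are the action maps; base change (the $*$--pushforward commutes with $!$--restriction, applied to $i_O\colon O\hookrightarrow X$, which we factor as an open immersion followed by a closed one) together with $(\mathrm{id}_T\times i_O)^!(\mL_\xi\boxtimes\mF)\cong\mL_\xi\boxtimes i_O^!\mF$ gives
\[ i_O^!(\mL_\xi*\mF)\;\cong\;\mL_\xi*(i_O^!\mF), \]
the right--hand convolution taken on $O\cong T$. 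Hence $\mL_\xi*(i_O^!\mF)=0$ for all $\xi$, with $i_O^!\mF\in D(T)_{hol}$; granting the torus case, $i_O^!\mF=0$ for every orbit $O$, so $i_x^!\mF=0$ for every closed point $x$ (factor $i_x$ through $O_x$), so $i_x^*\mathbb D\mF=0$ for all $x$, so $\mathbb D\mF=0$ and $\mF=0$.

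\emph{The torus case.} For $X=T$ I would apply the Mellin transform $\mathfrak M$ of \S\ref{MT}: it is an equivalence carrying $*$ to $\otimes^L_{\bC[v_i]}$, and $\mathfrak M(\mL_\xi)=\bigoplus_{\mu\in\breve\ft,\,[\mu]=\xi}\bC_\mu$ by the $n=1$ case of \S\ref{MT of E_xi}, where $\bC_\mu$ is the residue field at $\mu$. Thus the hypothesis $\mL_\xi*\mF=0$ for all $\xi$ becomes: $\bC_\mu\otimes^L_{\bC[v_i]}\mathfrak M(\mF)=0$ for every closed point $\mu\in\breve\ft$ (equivalently $R\Gamma_{\text{dR}}(T,\mL_\xi\otimes\mF)=0$ for all $\xi$, the arbitrary--$\mF$ analogue of Proposition \ref{Psi and L}(3), via $\mL_\xi*\mF\cong R\Gamma_{\text{dR}}(T,\mL_{\xi^{-1}}\otimes\mF)\otimes\mL_\xi$).

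\emph{The hard part}, and the only non--formal step, is then the implication: if $\mF\in D(T)_{hol}$ and $\bC_\mu\otimes^L_{\bC[v_i]}\mathfrak M(\mF)=0$ for every closed $\mu\in\breve\ft$, then $\mathfrak M(\mF)=0$. This resists a naive Nakayama argument, because $\mathfrak M$ of a holonomic module need not be coherent over $\bC[v_i]$ — e.g.\ $\mathfrak M(\mO_T)=\bigoplus_{\mu\in\Lambda}\bC_\mu$ — and ``vanishing of all closed derived fibres'' does not in general force a complex of $\bC[v_i]$--modules to vanish (take $\bC(v_i)$). What I would invoke instead is the structure theory of Mellin transforms of holonomic $D$--modules on a torus (Gabber--Loeser, and its $D$--module incarnation): the cohomology of $\mathfrak M(\mF)$ is finitely generated over the difference--operator algebra $\bC[v_i]\langle x_i^{\pm1}\rangle$ and, as a $\bC[v_i]$--module, is built by a finite filtration out of sheaves coherent and generically locally free along torsion translates of Lie algebras of subtori of $\breve T$. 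On each such block $\bC_\mu\otimes^L i_{S*}N$ vanishes for $\mu\notin S$ and, by Nakayama along $S$, is nonzero at a generic $\mu\in S$ unless $N=0$; running through the filtration in order of decreasing $\dim S$ (controlling generic ranks along each stratum), the vanishing of all closed derived fibres forces every block, hence $\mathfrak M(\mF)$, hence $\mF$, to be zero. Everything before this last step — the base change, the monoidality of $\mathfrak M$, and the computation $\mathfrak M(\mL_\xi)=\bigoplus_{[\mu]=\xi}\bC_\mu$ already recorded in \S\ref{MT of E_xi} — is routine.
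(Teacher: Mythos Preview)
Your proof is correct and follows essentially the same route as the paper: reduce to the torus via base change along the orbit inclusions, then invoke Gabber--Loeser for the vanishing on $T$. The paper simply cites \cite[Proposition~3.4.5]{GL} (noting in a footnote that the $D$-module transcription goes through verbatim) at the point where you sketch the Mellin-transform structure-theory argument.
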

\begin{proof}
Since $T$ acts freely on $X$ we have an
embedding $o_x:T\ra X, t\ra t\cdot x$. Moreover, by base change, we have
\[R\Gamma_{\on{dR}}(T,\mL_{\xi^{-1}}\otimes^! o_x^!\mF)\is i_x^!(\mL_\xi*\mF)=0\]
for all $\xi\in\breve T$. Here $i_x:x\ra X$ is the natural inclusion map.
By
\cite[Proposition 3.4.5]{GL} it implies $o_x^!\mF=0$ for all $x$.\footnote{In \cite{GL} they proved the vanishing result in the setting of 
$\ell$-adic sheaves, but the same proof works for the setting of 
$D$-modules.} The lemma follows.


\end{proof}

\section{Non-linear Fourier transforms}\label{NL FT}
In this section we fix a $c\in\bC^\times$ and write 
$\Psi_{G,\ul}=\Psi_{G,\ul,c},\Psi_{\ul}=\Psi_{\ul,c}$.
Following 
Braverman-Kazhdan, we consider the functor of convolution with 
gamma $D$-module:
\[\mathrm F_{G,\ul}:=(-)*\Psi_{G,\ul}:D(G)_{hol}\ra D(G)_{hol},\ \mF\ra\mF*\Psi_{G,\ul}.\]
The result in \cite{BK1} (see, for example, \cite[Theorem 5.1]{BK1})
suggests that the functor $\mathrm F_{G,\ul}$ can be thought as a version of \emph{non-linear Fourier transform} on the 
derived category of holonomic $D$-modules. 

The following property of 
$\mathrm F_{G,\ul}$
 follows from Proposition \ref{properties of ind}, Proposition \ref{Psi and L}, 
 and Theorem \ref{main thm}:

\begin{thm}($\mathrm F_{G,\ul}$ commutes with induction 
functors)\label{commutes with ind}
For every $\mF\in D(T)_{hol}$ we have 
\[\mathrm F_{G,\ul}(\Ind_{T\subset B}^G(\mF))\is\Ind_{T\subset B}^G(\mathrm F_{T,\ul}(\mF)).\] 
Here $\mathrm F_{T,\ul}(\mF):=\mF*\Psi_\ul$.
In particular, for any Kummer local system
$\mL_\xi$ on $T$ we have 
\[\mathrm F_{G,\ul}(\Ind_{T\subset B}^G(\mL_\xi))\is V_{\ul,\xi}\otimes\Ind_{T\subset B}^G(\mL_\xi).\]
Here $V_{\ul,\xi}:=H^0_{\text{dR}}(\Psi_{\ul}\otimes\mL^{-1}_\xi)$.
\end{thm}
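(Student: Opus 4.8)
The plan is to derive Theorem \ref{commutes with ind} as a formal consequence of the acyclicity result Theorem \ref{main thm} and the semigroup-type compatibility in part (4) of Proposition \ref{properties of ind}. First I would check that $\Psi_{G,\ul}$ is an object of $D_G(G)$ to which Proposition \ref{properties of ind}(4) applies: by part (1) of Proposition \ref{properties of ind} the $D$-module $\Ind_{T\subset B}^G(\Psi_\ul)$ lies in $\mM_G(G)$, and $\Psi_{G,\ul}$ is its $\rW$-invariant direct summand, hence is again $G$-equivariant. By Theorem \ref{main thm} the object $\on{Av}_U(\Psi_{G,\ul})$ is supported on $T=B/U$, and by Corollary \ref{Av of Psi} it is moreover canonically isomorphic to $\Psi_\ul$. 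Thus the hypotheses of Proposition \ref{properties of ind}(4) hold with $\mG=\Psi_{G,\ul}$ and $\mF'=\on{Av}_U(\Psi_{G,\ul})\is\Psi_\ul$.

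Applying Proposition \ref{properties of ind}(4) to an arbitrary $\mF\in D(T)_{hol}$ then gives
\[\mathrm F_{G,\ul}(\Ind_{T\subset B}^G(\mF))=\Ind_{T\subset B}^G(\mF)*\Psi_{G,\ul}\is\Ind_{T\subset B}^G(\mF*\Psi_\ul)=\Ind_{T\subset B}^G(\mathrm F_{T,\ul}(\mF)),\]
which is the first assertion. For the Kummer case I would take $\mF=\mL_\xi$: since convolution on the torus $T$ is commutative, $\mathrm F_{T,\ul}(\mL_\xi)=\mL_\xi*\Psi_\ul\is\Psi_\ul*\mL_\xi$, and part 3) of Proposition \ref{Psi and L} identifies the right-hand side with $V_{\ul,\xi}\otimes\mL_\xi$, where $V_{\ul,\xi}=H^0_{\text{dR}}(\Psi_\ul\otimes\mL_\xi^{-1})$ is one-dimensional. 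Substituting this into the isomorphism just obtained and using that $\Ind_{T\subset B}^G$ is $\bC$-linear, hence commutes with $-\otimes V_{\ul,\xi}$ (a finite direct sum), yields $\mathrm F_{G,\ul}(\Ind_{T\subset B}^G(\mL_\xi))\is V_{\ul,\xi}\otimes\Ind_{T\subset B}^G(\mL_\xi)$.

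Since all of the genuinely new content — the acyclicity Theorem \ref{main thm}, the construction of the compatibility isomorphism in Proposition \ref{properties of ind}(4), and the torus-level computation in Proposition \ref{Psi and L} — has already been established, I do not expect a real obstacle here; the only thing needing care is confirming that it is $\on{Av}_U(\Psi_{G,\ul})$ itself (rather than a cohomology $D$-module of it) that plays the role of $\mF'$ in Proposition \ref{properties of ind}(4), and unwinding Corollary \ref{Av of Psi} to see that $\mF'\is\Psi_\ul$ as required. This is purely a matter of chasing definitions.
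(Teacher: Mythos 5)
Your proposal is correct and follows exactly the route the paper intends: the paper derives Theorem \ref{commutes with ind} precisely from Proposition \ref{properties of ind}(4) (applied with $\mG=\Psi_{G,\ul}$, which is legitimate because Theorem \ref{main thm} and Corollary \ref{Av of Psi} give $\on{Av}_U(\Psi_{G,\ul})\is\Psi_\ul$ supported on $T$) together with Proposition \ref{Psi and L}(3) for the Kummer case. You have simply spelled out the details that the paper leaves as a one-line citation.
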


We have the following conjecture:
\begin{conjecture}[see Conjecture 6.8 in 
\cite{BK1}]
$\mathrm F_{G,\ul}$ is an exact functor.

\end{conjecture}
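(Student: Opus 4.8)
The plan is a two–step reduction — duality to right $t$-exactness, then a torus–localization — after which the decisive obstacle isolates itself.

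\emph{Step 1: reduction to right $t$-exactness.} By \eqref{dual of Psi} we have $\mathbb D(\Psi_{G,\ul,c})\is\Psi_{G,\ul,-c}$, and since $\mathbb D$ interchanges the $*$- and $!$-convolutions on $D(G)_{hol}$ one gets $\mathbb D\big(\mathcal F*\Psi_{G,\ul,c}\big)\is\mathbb D(\mathcal F)\star\Psi_{G,\ul,-c}$. First I would upgrade Corollary \ref{Av of Psi} to a cleanness statement for the gamma $D$-module: the natural map $\mathcal F\star\Psi_{G,\ul}\to\mathcal F*\Psi_{G,\ul}$ is an isomorphism for every $\mathcal F\in D(G)_{hol}$. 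Over $T$ this is Proposition \ref{Psi and L}(1); over $G$ it should follow by the averaging argument of Theorem \ref{main thm}, using $\on{Av}_{U!}(\Psi_{G,\ul,c})\is\mathbb D\,\on{Av}_U(\Psi_{G,\ul,-c})\is\Psi_{\ul,c}\is\on{Av}_U(\Psi_{G,\ul,c})$ together with Proposition \ref{properties of ind}(4). Granting cleanness, $\mathbb D\circ\mathrm F_{G,\ul,c}\is\mathrm F_{G,\ul,-c}\circ\mathbb D$; so if every $\mathrm F_{G,\ul,c}$ is right $t$-exact, then each $\mathrm F_{G,\ul,-c}$ is also left $t$-exact, hence $t$-exact, and we are done. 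It remains to prove $\mH^i(\mathcal F*\Psi_{G,\ul})=0$ for $i>0$ and $\mathcal F\in\mM(G)_{hol}$.

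\emph{Step 2: the torus, and what follows from it.} The Mellin transform $\mathfrak M(\Psi_{\ul,c})$ of \S\ref{MT} is a flat $\bC[v_i]$-module: from its description as $\bC[x_1^{\pm1},\dots,x_r^{\pm1}]e^{\sum cx_i}\otimes_{\bC[v_1,\dots,v_r]}\on{Sym}(\ft)$ one checks the first factor is $\bC[v_1,\dots,v_r]$-flat, whence the whole is $\on{Sym}(\ft)$-flat. Consequently $(-)\otimes^L_{\bC[v_i]}\mathfrak M(\Psi_{\ul,c})$ is underived and $\mathrm F_{T,\ul}$ is $t$-exact on all of $D(T)_{hol}$, refining Proposition \ref{Psi and L}(3) past Kummer sheaves. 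Combined with the paper's machinery — $\mathrm F_{G,\ul}$ commuting with parabolic induction (Theorem \ref{commutes with ind} for $M=T$, and in general by transitivity of induction and the acyclicity Theorem \ref{main thm}/Corollary \ref{Av of Psi}), $t$-exactness of $\Ind_{T\subset B}^G$ (smallness of \eqref{GS map} and its parabolic analogues), $t$-exactness of (twisted) parabolic restriction \cite[Cor.~3.4]{BFO} (see also \cite{CY}), and Proposition \ref{properties of ind}(4) — this yields the conjecture for admissible $\mathcal F$ (recovering Theorem \ref{exact of FT}), and, via the cuspidal-support stratification of $\mM_G(G)_{hol}$ and induction on $\dim G$, for all \emph{conjugation-equivariant} $\mathcal F$ (a cuspidal equivariant $\mathcal F$ has $\on{Av}_U(\mathcal F)$ supported on $T$, so Proposition \ref{properties of ind}(4) rewrites $\mathrm F_{G,\ul}(\mathcal F)$ as an induction of $\Psi_\ul*\on{Av}_U(\mathcal F)$, again a single $D$-module by the torus case). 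The genuinely new difficulty is the non-equivariant, non-admissible case.

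\emph{Step 3: the obstacle.} For non-equivariant $\mathcal F$ the argument above breaks down at exactly one point. Writing $\Psi_{G,\ul}=\Ind_{T\subset B}^G(\Psi_\ul)^{\rW}$ and using \eqref{Ind GS}, one would want the projection formula $\mathrm F_{G,\ul}(\mathcal F)\is\Ind_{T\subset B}^G\big(\Psi_\ul*\Res_{T\subset B}^G\mathcal F\big)^{\rW}$, which together with Step 2, $t$-exactness of $\Res_{T\subset B}^G$ and $\Ind_{T\subset B}^G$, and exactness of $(-)^{\rW}$ would finish the proof. But in the correspondence computing $\mathcal F*\Ind_{T\subset B}^G(\Psi_\ul)$ one meets $\mathcal F$ at points $hbh^{-1}g$ rather than $b^{-1}g$, so the convolution genuinely depends on $\mathcal F$ away from $B$ and is \emph{not} expressible through $\Res_{T\subset B}^G\mathcal F$ alone; the identity holds only for conjugation-equivariant $\mathcal F$. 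Bridging this gap is the main obstacle, and I see two possible routes: (i) an unrestricted ``naive'' projection formula in which $\Res_{T\subset B}^G\mathcal F$ is replaced by an appropriate parabolic-restriction complex defined for all holonomic $\mathcal F$ and still $t$-exact (a strengthening of \cite{CY}); or (ii) exploiting the centrality of $\Psi_{G,\ul}$ (it is $G$-equivariant) to de-equivariantize, reducing exactness for arbitrary $\mathcal F$ to the equivariant case of Step 2.

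\emph{Alternative route.} If neither works, I would run the Vinberg-monoid degeneration underlying \cite{BK,BK1}: realize $\mathrm F_{G,\ul}$ as the generic member of a one-parameter family of convolution functors whose special member is the additive Fourier–Laplace transform $\mathrm F_{V_\rho}$ on $V_\rho=\bigoplus_i V_{\lambda_i}$ — which is $t$-exact by Katz–Laumon \cite{KL} (in the $D$-module setting) and commutes with parabolic induction \cite{L} — and transfer $t$-exactness from the special fibre to the generic one using that nearby cycles are $t$-exact up to shift and compatible with convolution. The crux, and the step where a genuinely new input is unavoidable, is to identify the nearby cycles of $\Psi_{G,\ul}$ along this degeneration with the additive Fourier kernel on $V_\rho$: the gamma $D$-module is wildly ramified along the boundary divisor of the monoid, so this computation is delicate and is, in effect, the whole content of the conjecture.
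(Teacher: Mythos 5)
First, a point of orientation: this statement is an \emph{open conjecture} in the paper. The paper does not prove it; it only proves the weaker Theorem \ref{exact of FT} (exactness on admissible $D$-modules, via the intertwining functor $\mathrm I_Y$ and \cite[Cor.~3.4]{BFO}). So your proposal has to stand on its own, and — as you yourself acknowledge — it does not close the conjecture. The one concrete error worth flagging is in Step 1, and it matters because it shows the weight of the argument is distributed wrongly. Your reduction rests on the cleanness assertion that $\mathcal F\star\Psi_{G,\ul}\to\mathcal F*\Psi_{G,\ul}$ is an isomorphism for \emph{every} holonomic $\mathcal F$, and your proposed derivation of this from Theorem \ref{main thm} together with Proposition \ref{properties of ind}(4) is circular: that proposition requires the convolution partner to be conjugation-equivariant, which is exactly the restriction you correctly identify in Step 3 as the main obstacle. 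Moreover, had you actually secured cleanness, you would already be done at the end of Step 1: the multiplication map $m:G\times G\to G$ is affine, so $m_*$ is right $t$-exact by Artin vanishing and $\mathrm F_{G,\ul,c}$ is \emph{automatically} right $t$-exact for every $c$. The task you set yourself for Steps 2--3, namely $\mH^i(\mathcal F*\Psi_{G,\ul})=0$ for $i>0$, is free; the entire content of the conjecture is the left $t$-exactness, which under your duality argument is precisely the cleanness you did not prove. So Steps 2--3 attack the easy half, and the hard half is concentrated in an unproved assertion.

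That said, two parts of the proposal have real value. The observation in Step 2 that $\mathfrak M(\Psi_{\ul,c})$ is flat over $\bC[v_1,\dots,v_r]$ (it is a filtered colimit of free rank-one modules $\bC[v]\cdot x^{-n}e^{cx}$ with injective transition maps), hence that $\mathrm F_{T,\ul}$ is $t$-exact on all of $D(T)_{hol}$, is correct and genuinely strengthens Proposition \ref{Psi and L}(3) and Lemma \ref{Psi_c}, which the paper only records for Kummer and unipotent local systems. And your Step 3 diagnosis — that $\mathcal F*\Ind_{T\subset B}^G(\Psi_\ul)$ cannot be rewritten through $\Res_{T\subset B}^G\mathcal F$ unless $\mathcal F$ is conjugation-equivariant — is exactly the reason the paper retreats to admissible modules and routes Theorem \ref{exact of FT} through $\on{HC}(-)*\mathrm I_Y$ rather than through $\Res_{T\subset B}^G$. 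The Vinberg-degeneration alternative is, as you say, where genuinely new input would have to enter. As it stands, the proposal is an accurate map of the difficulty rather than a proof, and its Step 1 should be corrected to make clear that cleanness is not a lemma to be granted but is essentially the whole conjecture.
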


We shall prove 
a weaker statement which says that 
$\mathrm F_{G,\ul}$ is 
exact on the category of admissible 
$D$-modules.
We first recall the definition of admissible modules following 
\cite{G}.

\begin{definition}\label{admissible}
A holonomic $D$-module $\mF$ on $G$ is called admissible if 
the action of the center $Z(\rU(\fg))$ of $\rU(\fg)$, viewing as invariant differential operators, is locally finite.
We denote by $\mA(G)$ the abelian category of admissible $D$-modules on $G$ and $D(\mA(G))$
be the corresponding derived category.
\end{definition}

\begin{remark}
We do not require 
admissible $D$-modules to be $G$-equivariant with respect to the 
conjugation action. So the definition of admissible modules here is more general than the one in \cite{G}.
\end{remark}

We have the following characterization of admissible modules:
a $\cF\in\cM(G)_{hol}$ is admissible 
if and only if  $\on{HC}(\cF)\in D(Y/T)$ is monodromic 
with respect to the right $T$-action, or equivalently,
$\on{Av}_U(\cF)\in D(X)$ is monodromic with respect to the right 
$T$-action.

To every $
\theta\in\breve T/\rW$,
let $\mA(G)_{\theta}$ be the full subcategory of 
$\mA(G)$ consisting of holonomic $D$-modules on $G$ such that  
$Z(\rU(\fg))$ acts locally finitely 
with generalized eigenvalues in $\theta$.
The category $\mA(G)$ decomposes as 
\[\mA(G)=\bigoplus_{\theta\in\breve T/\rW}\mA(G)_{\theta}.\]
\quash{
Moreover, according to \cite{G}, we have $\cM\in D(\mA(G))_{\theta}$
if and only if $\on{HC}(\cM)\in\bigoplus_{\xi\in\breve T,[\xi]=\theta} D(Y/T)_{\xi}$
, or equivalently, 
$\on{Av}_U(\cM)\in\bigoplus_{\xi\in\breve T,[\xi]=\theta} D(X)_{\xi}$.}

\begin{thm}\label{exact of FT}
The functor $\mathrm F_{G,\ul}:D(G)_{hol}\ra D(G)_{hol},\ \mF\ra\mF*\Psi_{G,\ul}$ preserves the subcategory 
$D(\mA(G))$ and the resulting functor 
\[\mathrm F_{G,\ul}:D(\mA(G))\ra D(\mA(G))\]
is exact with respect to the natural $t$-structure.
That is, we have $\mathrm F_{G,\ul}(\mF)\in\mA(G)$ for $\mF\in\mA(G)$.

\end{thm}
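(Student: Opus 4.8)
The plan is to deduce $t$-exactness by testing through the averaging functor $\on{Av}_U=(\pi_U)_*\colon\cM(G)_{hol}\to D(X)$ (with $X=G/U$, equivalently $i^0\on{HC}$ up to the standard normalization), and then to reduce matters to a $t$-exactness statement for right convolution with $\Psi_\ul$ on the monodromic category of $X$, which in turn rests on Lemma \ref{Psi_c} and on the exactness of the twisted Harish--Chandra functor from \cite[Corollary 3.4]{BFO}.

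\emph{Step 1 (reduction via $\on{Av}_U$).} Recall from the discussion preceding the theorem that $\cF\in\cM(G)_{hol}$ is admissible iff $\on{Av}_U(\cF)$ is monodromic for the right $T$-action on $X$, and that $\mA(G)=\bigoplus_\theta\mA(G)_\theta$. I will use two facts about $\on{Av}_U$. First, its restriction to $D(\mA(G))$ is $t$-exact (for the suitable normalization): this is the exactness of the twisted Harish--Chandra functor \cite[Corollary 3.4]{BFO}, transported through $i^0\on{HC}\cong\on{Av}_U$ (Lemma \ref{M and H}) and the compatibility of $\on{HC}$ with the equivalence of abelian categories $\Gamma\colon\cM_G(G)\cong Z(\mathcal{HC},\otimes)$, working one block $\mA(G)_\theta$ at a time. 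Second, $\on{Av}_U$ is conservative on $D(G)_{hol}$: since $\on{CH}\circ\on{HC}(\cF)\cong\cF*Sp$ and the identity functor is a direct summand of $\on{CH}\circ\on{HC}$ (Lemma \ref{CHHC}), an object killed by $\on{HC}$, hence by $\on{Av}_U=i^0\on{HC}$ with $i^0$ an equivalence, is zero. Granting these, to prove $\cF*\Psi_{G,\ul}\in\mA(G)$ for $\cF\in\mA(G)$ it suffices to show that $\on{Av}_U(\cF*\Psi_{G,\ul})$ is a monodromic $D$-module concentrated in a single cohomological degree.

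\emph{Step 2 (projection formula) and \emph{Step 3} (conclusion).} For arbitrary $\cF\in\cM(G)_{hol}$ there is a natural isomorphism $\on{Av}_U(\cF*\Psi_{G,\ul})\cong\on{Av}_U(\cF)*\Psi_\ul$, where $\Psi_\ul=\on{Av}_U(\Psi_{G,\ul})$ (Corollary \ref{Av of Psi}) is supported on $T=B/U$ (Theorem \ref{main thm}) and acts on $\on{Av}_U(\cF)$ by right convolution for the right $T$-action on $X$. This is elementary base change: $\pi_U\circ m\colon G\times G\to G/U$, $(g,h)\mapsto ghU$, factors through $\id_G\times\pi_U$ as the left $G$-action $G\times(G/U)\to G/U$; and since $\Psi_{G,\ul}$ pushes forward to $\Psi_\ul$, supported on $T=B/U$ where $U$ acts trivially, the resulting map further factors through $\pi_U\times\id_T$ as the right $T$-action $(G/U)\times T\to G/U$ — proper base change gives the claim. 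Now if $\cF\in\mA(G)$, then $\on{Av}_U(\cF)\in\cM(X)_{mon}$ sits in one degree by Step 1, and by Proposition \ref{Psi and L}(4) right convolution with $\Psi_\ul$ preserves $D(X)_{mon}$, so $\cF*\Psi_{G,\ul}\in D(\mA(G))$. It remains to see that $\on{Av}_U(\cF)*\Psi_\ul$ is again in one degree, i.e.\ that right convolution with $\Psi_\ul$ is $t$-exact on $\cM(X)_{mon}$: decomposing by generalized monodromy $\xi\in\breve T$, one uses that the pro-local system $\hat\mL_\xi$ is the unit for right convolution on the $\xi$-monodromic block, and that $\Psi_\ul*\hat\mL_\xi\cong\hat\mL_\xi$ as the inverse limit of the isomorphisms $\Psi_\ul*\mL^n_\xi\cong\mL^n_\xi$ of Lemma \ref{Psi_c}; hence $(-)*\Psi_\ul$ is isomorphic to the identity on each such block, in particular $t$-exact. (Equivalently, after Mellin transform along $X\to G/B$ the computation in the proof of Proposition \ref{key prop} exhibits $\mathfrak M(\Psi_\ul)$ as a $\Lambda$-equivariant line bundle on $\breve\ft$, and tensoring with a line bundle is exact.) Therefore $\on{Av}_U(\cF*\Psi_{G,\ul})$ lies in one degree, and Step 1 concludes.

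\emph{Main obstacle.} The two substantive points are: (i) upgrading the \cite[Corollary 3.4]{BFO} exactness, formulated for $G$-equivariant character $D$-modules, to the larger non-equivariant category $\mA(G)$ (via $i^0\on{HC}$ and the Mellin/block formalism); and (ii) the $t$-exactness of $(-)*\Psi_\ul$ on the full monodromic category, where the pro-unipotent part of the monodromy must be handled with care — this is exactly where Lemma \ref{Psi_c} (rather than Proposition \ref{Psi and L}(3) alone) is needed, since one convolves with the pro-object $\hat\mL_\xi$. Steps 2 and the remaining assembly are formal given the results already established.
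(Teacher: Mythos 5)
Your reduction in Step 2 (the base–change/projection formula $\on{Av}_U(\cF*\Psi_{G,\ul})\is\on{Av}_U(\cF)*\Psi_\ul$) and your Step 3 computation (that right convolution with $\Psi_\ul$ acts as the identity on each monodromic block, via Lemma \ref{Psi_c} and the fact that $\hat\mL_\xi$ is the unit of the block) are sound and are essentially the ingredients the paper uses. The first half of the theorem --- preservation of $D(\mA(G))$ --- is also handled exactly as in the paper. The problem is Step 1: you assert that $\on{Av}_U$ (equivalently $i^0\on{HC}$) is $t$-exact on $D(\mA(G))$ and then use it, together with conservativity, as the test functor for exactness. That is not what \cite[Corollary 3.4]{BFO} says. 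The corollary asserts exactness and conservativity of the \emph{twisted} Harish--Chandra functor $\cM\mapsto\on{HC}(\cM)*\mathrm I_Y$, i.e.\ $\on{HC}$ composed with the intertwining (pro-)object $\mathrm I_Y=j_{Y!}(p_Y^0\hat\mL_{w_0(\xi^{-1})})$; transporting through $i^0$ gives exactness of $\on{Av}_U(-)*\mathrm I_X$, not of $\on{Av}_U$. The twist cannot be dropped: $\pi_U:G\ra G/U$ is affine, so $(\pi_U)_*$ is only right $t$-exact in general, and the failure of exactness of the untwisted $\on{HC}$ is precisely the phenomenon that forces the introduction of the intertwining functor in \cite{BFO} and \cite{CY}. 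As written, your argument concludes that $\on{Av}_U(\cF)*\Psi_\ul$ sits in one degree, but without $t$-exactness of $\on{Av}_U$ this does not let you descend the conclusion to $\cF*\Psi_{G,\ul}$ itself.

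The repair is localized and turns your proposal into the paper's proof: replace the test functor $\on{Av}_U$ by $\on{HC}(-)*\mathrm I_Y$ on each block $\mA(G)_{[\xi]}$, which \cite[Corollary 3.4]{BFO} guarantees is $t$-exact and conservative. One then needs the identity $\on{HC}(\Psi_{G,\ul})*\mathrm I_Y\is\mathrm I_Y$, which after applying $i^0$ becomes $\Psi_\ul*\mathrm I_X\is\mathrm I_X$; this follows from $\hat\mL_{\xi^{-1}}*\mathrm I_X\is\mathrm I_X$ together with the projective system of isomorphisms $\Psi_{\ul}*\mL^n_\xi\is\mL^n_\xi$ of Lemma \ref{Psi_c} --- exactly the computation you carry out in Step 3, only applied to $\mathrm I_X$ rather than to $\on{Av}_U(\cF)$ directly. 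With that substitution, $\on{HC}(\cF*\Psi_{G,\ul})*\mathrm I_Y\is\on{HC}(\cF)*\mathrm I_Y$ lies in a single degree for $\cF\in\mA(G)_{[\xi]}$, and conservativity of the twisted functor finishes the argument.
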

\begin{proof}
We show that $\mathrm F_{G,\ul}$ preserves 
$D(\mA(G))$. Using the characterization of admissible modules above 
we have to show that $\on{Av}_U(\mathrm F_{G,\ul}(\mM))$
is monodromic for $\cM\in D(\cA(G))$.
Since $\on{Av}_U(\Psi_{G,\ul})\is\Psi_\ul$ by Corollary \ref{Av of Psi}, we have 
\[\on{Av}_U(\mathrm F_{G,\ul}(\mM))\is\on{Av}_U(\cM)*\on{Av}_U(\Psi_{G,\ul})
\is\on{Av}_U(\cM)*\Psi_\ul\]
which is 
$T$-monodromic by Proposition \ref{Psi and L}. The claim follows.

We show that $\mathrm F_{G,\ul}$ is exact on $\cA(G)$.
Let $\cO_Y$ (resp. $\cO_X$) be 
pre-image of the open $G$-orbit (resp. $B$-orbit)
in $\mB\times\mB$ (resp. $\mB$) under the projection map 
$Y\ra\mB\times\mB$ (resp. $X\ra\mB$).
The quotient $G\backslash\cO_Y$ (resp. $U\backslash\cO_X$) is 
a torsor over $T$, choosing a trivialization of the torsor, we get a map
$p_Y:\cO_{Y}\ra T$ (resp. $p_X:\cO_{X}\ra T$).
We denote by 
$j_Y:\cO_{Y}\ra Y$
(resp. $j_X:\cO_{X}\ra X$) the natural embedding. 
Consider the following
pro-object in $M_{\xi,w_0(\xi^{-1})}$
(resp. $H_{\xi^{-1},w_0(\xi^{-1})}$):
\[\mathrm I_{Y}:=j_{Y!}(p_Y^0\hat\mL_{w_0(\xi^{-1})}):=\underleftarrow{\on{lim}}\ j_{Y!}
(p_Y^0\hat\mL_{w_0(\xi^{-1})}^n)
\ \ (\text{resp.}\ \mathrm I_{X}:=j_{X!}(p_X^0\hat\mL_{w_0(\xi^{-1})}):=\underleftarrow{\on{lim}}
\ j_{X!}
(p_X^0\hat\mL_{w_0(\xi^{-1})}^n)).\]
Recall the notion of intertwining functor (see \cite{BB,BG})
\[(-)*\mathrm I_Y:D(Y)_{\xi,\xi^{-1}}\ra D(Y)_{\xi,w_0(\xi^{-1})}\ \ (\text{resp.}\ 
(-)*\mathrm I_X:D(X)_{\xi^{-1},\xi^{-1}}\ra D(X)_{\xi^{-1},w_0(\xi^{-1})}).\]

According to \cite[Corollary 3.4]{BFO}, the assignment $\cM\ra\on{HC}(\cM)*\mathrm I_{Y}:=
\underleftarrow{\on{lim}}\on{HC}(\cM)* j_{Y!}(p_Y^0\mL_{\xi}^n),\  \cM\in D(G)_{hol}$
restricts to a functor 
 \[\on{HC}(-)*\mathrm I_{Y}:D(\mA(G)_{[\xi]})\ra D(Y)_{\xi,w_0(\xi^{-1})}\]
which is $t$-exact and conservative\footnote{The definition of intertwining functor here is different from that of \cite{BFO}, though one can show that 
the two definition are equivalent. In \cite{BFO}, the intertwining functor is described 
as a shriek convolution with certain $G$-equivariant $D$-module 
on $Y$.}.
So to prove the exactness of $\mathrm F_{G,\ul}$ it suffices to show that
\[\on{HC}(
\mathrm F_{G,\ul}(\cM))*\mathrm I_{Y}
\in\cM(Y)_{\xi,w_0(\xi^{-1})}\] for all 
$\mM\in\mA(G)_{[\xi]}$.  
We claim that there is an isomorphism of pro-objects 
\beq\label{claim}
\on{HC}(\Psi_{G,\ul})*\mathrm I_Y\is\mathrm I_Y.
\eeq
Thus
\[\on{HC}(
\mathrm F_{G,\ul}(\cM))*\mathrm I_{Y}
\is\on{HC}(\cM*\Psi_{G,\ul})*\mathrm I_Y\is\on{HC}(\cM)*\on{HC}(\Psi_{G,\ul})*\mathrm I_Y\is\on{HC}(\cM)*\mathrm I_Y\]
which is in $\cM(Y)_{\xi,w_0(\xi^{-1})}$ by the exactness of the 
functor $\on{HC}(-)*\mathrm I_{Y}$. We are done.

Proof of the claim. Applying the equivalence 
$i^0:D_G(Y)\is D_U(X)$ to (\ref{claim}) and using 
$i^0(\on{HC}(\Psi_{G,\ul}))\is\on{Av}_U(\Psi_{G,\ul})\is\Psi_\ul$
, $i^0(\mathrm I_Y)\is\mathrm I_X$, we reduce to show that there is an isomorphism of 
pro-objects
$\Psi_\ul*\mathrm I_X\is\mathrm I_X$. 
Note that we have $\hat\mL_{\xi^{-1}}*\mathrm I_X\is\mathrm I_X$\footnote{Indeed, 
it follows from the fact that the functor 
$\hat\mL_{\xi}*(-):D_U(X)\ra \text{pro}(D_U(X))$ (here $\text{pro}(D_U(X))$
is the category of pro-objects in $D_U(X)$), when restricts to 
the subcategory $D(H_{\xi,\xi'})$ consisting of $T\times T$-monodromic 
complexes with generalized monodromy $(\xi,\xi')$, is isomorphic to the identity functor.
 },
hence by
Lemma \ref{Psi_c} 
\[\Psi_\ul*\mathrm I_X\is\Psi_\ul*\hat\mL_{\xi^{-1}}*\mathrm I_X
\is\hat\mL_{\xi^{-1}}*\mathrm I_X\is\mathrm I_X.\]
The claim follows.


\end{proof}

\end{document}